\documentclass{article}

\usepackage[utf8]{inputenc}
\usepackage[normalem]{ulem} 	
\usepackage{xparse}         	
\usepackage{comment}

\usepackage{mathtools,amssymb,amsthm,empheq}
\usepackage{bm} 	   	
\usepackage{relsize}   	
\usepackage{mathrsfs}  	
\usepackage{cancel}   	
\usepackage{esint}    	
\usepackage{xcolor} 	
\usepackage{cases}

\usepackage{pgf,tikz}
\usetikzlibrary{cd,arrows,babel,shapes,positioning}

\usepackage{enumerate}

\usepackage{hyperref}
\hypersetup{
	colorlinks,
	citecolor=blue,
	filecolor=blue,
	linkcolor=blue,
	urlcolor=blue
}

\newcommand{\supp}{\mathrm{supp}}

\newcommand{\EPS}{\varepsilon}

\newcommand{\ColorWord}[2]{\color{#1} #2 \color{black} }

\allowdisplaybreaks

\numberwithin{equation}{section}

\theoremstyle{plain}

\newtheorem{thm}[equation]{Theorem}
\newcommand{\refthm}[1]{\emph{\ColorWord{blue}{Theorem} \ref{#1}}}

\newtheorem{lemma}[equation]{Lemma}
\newcommand{\reflemma}[1]{\emph{\ColorWord{blue}{Lemma} \ref{#1}}}

\newtheorem{prop}[equation]{Proposition}
\newcommand{\refprop}[1]{\emph{\ColorWord{blue}{Proposition} \ref{#1}}}

\newtheorem{cor}[equation]{Corollary}
\newcommand{\refcor}[1]{\emph{\ColorWord{blue}{Corollary} \ref{#1}}}

\theoremstyle{definition}

\newtheorem{defin}[equation]{Definition}
\newcommand{\refdef}[1]{\emph{Definition \ref{#1}}}

\theoremstyle{remark}

\newtheorem{rem}[equation]{Remark}
\newcommand{\refrem}[1]{\textit{Remark \ref{#1}}}

\newtheorem{ass}[equation]{Assumption}
\newcommand{\refass}[1]{\textit{Assumption \ref{#1}}}

\newtheoremstyle{named}{}{}{\itshape}{}{\bfseries}{}{.5em}{#1 #3}
\theoremstyle{named}
\title{Nontangential Maximal Function estimates for the elliptic Mixed Boundary Value Problem with variable coefficients}
\author{Hongjie Dong and Martin Ulmer}
\date{\today}

\begin{document}

\maketitle

\begin{abstract}
We consider an elliptic operator \(L\) with variable, merely bounded, and measurable coefficients on a Lipschitz domain, and study solutions to \(Lu=0\) that attain given Neumann and Dirichlet-regularity data on different parts of the boundary. The boundary data lies in \(L^p\) or \(W^{1,p}\) respectively, and we show nontangential maximal function estimates of the gradient of the solution. This mixed boundary value problem generalizes the pure Dirichlet, regularity, and Neumann problem with rough boundary data in \(L^p\), and the already established mixed boundary value problem for the Laplacian.
\end{abstract}


\section{Introduction}

In this work, we consider a bounded Lipschitz domain \(\Omega\subset\mathbb{R}^n\), where the boundary \(\partial\Omega\) can be written as \(\overline{\mathcal{D}}\cup\mathcal{N}\) for two open components \(\mathcal{D}\) and \(\mathcal{N}\). These two components are intersection free, i.e., \(\mathcal{D}\cap\mathcal{N}=\emptyset\) and we call their shared boundary, \(\Lambda:=\overline{\mathcal{D}}\cap\overline{\mathcal{N}}\), the interface.
Furthermore, we consider the elliptic operator
\begin{align}
    L:=\mathrm{div}(A(x)\nabla \cdot)\label{eq:DefEllipticOperator}
\end{align}
where \(A\) is a non necessarily symmetric matrix with merely bounded and measurable coefficients that satisfies the ellipticity condition, i.e., there exists \(\lambda>0\) such that
\[\lambda|\xi|^2\leq A(x)\xi\cdot \xi,\quad |A(x)|\leq \lambda^{-1}\quad\textrm{for almost every }x\in \Omega, \textrm{ and all }\xi\in\mathbb{R}^n.\]

We are interested in solutions to the mixed boundary value problem, i.e.,  weak solutions \(u\in W^{1,2}_{\text{\text{loc}}}(\Omega)\) to the PDE
\begin{align}\begin{cases}
    Lu=0& \textrm{ in }\Omega,
    \\
    A\nabla u\cdot \nu=g_\mathcal{N} &\textrm{ on }\mathcal{N},
    \\
    u=g_\mathcal{D} &\textrm{ on }\mathcal{D},
\end{cases}\label{eq:MixedPDE}
\end{align}
where \(g_\mathcal{N}\in L^p(\partial\Omega)\) and \(g_\mathcal{D}\in L_1^p(\partial\Omega)\) (see Section \ref{section:def} for the definition of the Haj\l{}asz Sobolev space). The boundary data \(g_\mathcal{D}\) is attained as a nontangential limit in almost every point of \(\mathcal{D}\), while the tangential derivative of \(g_\mathcal{D}\) and the conormal derivative \(g_\mathcal{N}\) are attained in the sense of distributions on \(\mathcal{D}\) or \(\mathcal{N}\) respectively (more details in Section \ref{section:def}).

For instance, PDEs of this type with two different types of boundary condition are useful to describe stationary heat states of objects that are in contact with two different materials on its surface, where one appears as insulating and the other one has perfect thermal conduction. For applications in combustion theory, we would like to refer the reader to \cite{lederman_uniqueness_2001} and \cite{lederman_mixed_2001}, and for applications in modeling exocytosis, we would like to refer to \cite{johansson_solvability_2012}.
\medskip

Our goal is to establish bounds on the nontangential maximal function of the gradient of \(u\) in terms of the boundary data in \(L^p\) (like in \eqref{eq:NontangentialInLqWTSBound}), where the (mean-valued) nontangential maximal function we consider is given by
    \[\tilde{N}(F)(x):=\sup_{y\in\Gamma_\alpha(x)}\Big(\fint_{B_{\delta(y)/2}(y)}|F(z)|^2\,dz\Big)^{1/2}, \]
and \(\Gamma_\alpha(x)\) is a cone of aperture \(\alpha\) with tip in \(x\in\partial\Omega\). See the beginning of Section 3 for the definition. If a weak solution \(u\in W^{1,2}_{\text{loc}}(\Omega)\) satisfies \(\tilde{N}(\nabla u)\in L^p(\partial\Omega)\), then its conormal derivative and its tangential derivative convergence nontangentially almost everywhere and we can make sense of prescribing boundary data like in \eqref{eq:MixedPDE}. Hence, this question is not only interesting from the application perspective, but also from a purely mathematical perspective. 
Originally, Carlos Kenig posed the question of attaining \(L^p\) solvability for mixed boundary value problem for the Laplacian \(L=\Delta\) in \cite[3.2.15]{kenig_harmonic_1994}. Previously, \(L^2\)-solvability was established for the pure Dirichlet (cf. \cite{dahlberg_estimates_1977}) and the pure Neumann problem (cf. \cite{jerison_neumann_1981}), but interestingly, \(L^2\) solvability cannot be obtained for the mixed problem: If we consider the solution \(\mathrm{Re}(\sqrt{z})\) to the Laplace equation on the upper half plane, we see that it attains zero Neumann data on the positive real axis and zero Dirichlet data on the negative real line, but its derivative on the boundary is not locally in \(L^2\).
However, positive results of \(L^2\) solvability could still be obtained for the Laplacian, if one restricts to specific types of domains (cf. \cite{brown_mixed_1994},\cite{BrownSykes2001},\cite{MitreaMitrea2007},\cite{VenouziouVerchota2008}). For more general Lipschitz domains, \(L^p\) solvability was first obtained in \cite{lanzani_mixed_2008} when \(n=2\) for Lipschitz graph domains with small Lipschitz constant less than 1. After that \cite{ott_mixed_2013} proved \(L^p\) solvability for Lipschitz domains with Lipschitz interface in all dimensions, before \cite{taylor_mixed_2013} relaxed the regularity of the interface by only requiring that the interface is \((n-2+\EPS)\)-Alhfors regular for some sufficiently small \(\EPS\) and that \(\mathcal{D}\) satisfies the interior corkscrew condition. Up to this point, the \(L^p\) solvability results were valid for \(p\in(1,p_0)\) for some \(p_0>1\). This \(p_0\) was made precise in \cite{brown_estimates_2021} as \(p_0=\frac{n}{n-1}\), where the authors assumed a more regular \(C^{1,1}\)-domain with Lipschitz interface \(\Lambda\). Due to above example, this solvability range is know to be sharp on Lipschitz domains in \(n=2\). Finally, \cite{dong_dirichlet-conormal_2020} obtains the same \(p_0=\frac{m+2}{m+1}\) for bounded Lipschitz domains with rough interfaces that are essentially perturbations of Lipschitz graphs in \(m\) variables. It is worth noting that their \(p_0\) is independent of the ambient dimension \(n\), and obtains the sharp solvability range \(p_0=2\) if \(m=0\), which means Reifenberg flat interfaces. We essentially work in the setting of their geometric assumption on the interface, and would like to postpone the discussion of the geometrical details on the interface into Section \ref{section:Geometry}. The mixed boundary value problem has also been considered for the Lam\'{e} or the Stokes system (\cite{BrownMitrea2009}, \cite{BrownMitreaMitreaWright2010}).
\medskip

The novelty in this article is that we consider elliptic operators with variable coefficient matrices. Naturally, the theory for these elliptic operators lacks behind the theory for the Laplacian, and is accompanied by new obstacles. Let us denote the pure regularity boundary value problem with data in \(\dot{L}_1^p\) as \((R)_p\) and the pure Neumann boundary value problem with data in \(L^p\) as \((N)_p\) (see Section \ref{section:def} for details). We assume the following:

\begin{ass}[Large DPR condition]\label{ass:largeDPR}
There exists \(C>0\) such that
\begin{align}
    \sup_{\Delta\subset \partial\Omega}\sigma(\Delta)^{-1}\int_{T(\Delta)}\frac{(\mathrm{osc}_{B_{\delta(x)/2}(x)}A)^2}{\delta(x)} dx\leq C,\label{eq:LargeDPRcondition}
\end{align}
    where the supremum is taken over all boundary balls \(\Delta\subset\partial\Omega\), $\delta(x)=\text{dist}(x,\partial\Omega)$, and
    \[\mathrm{osc}_{B_{\delta(x)/2}(x)}A:=\sup_{y,z\in B_{\delta(x)/2}(x)}|A(z)-A(y)|.\]
\end{ass}
Alternatively, we can replace \eqref{eq:LargeDPRcondition} with the more famous \textit{large DKP condition}.
\begin{ass}[Large DKR condition]\label{ass:largeDKP}
There exists \(C>0\) such that
\begin{align}
    \sup_{\Delta\subset \partial\Omega}\sigma(\Delta)^{-1}\int_{T(\Delta)}\sup_{y\in B_{\delta(x)/2}(x)}|\nabla A(y)|^2 \delta(x) dx\leq C.\label{eq:largeDKPcondition}
\end{align}
\end{ass}
Since 
\[\frac{(\mathrm{osc}_{B_{\delta(x)/2}(x)}A)^2}{\delta(x)}\leq\sup_{B_{\delta(x)/2}(x)}|\nabla A(y)|^2 \delta(x),
\] 
we note that \eqref{eq:LargeDPRcondition} is more general than \eqref{eq:largeDKPcondition}.

The large DKP condition has a long and successful history in the study of boundary value problems for variable coefficient elliptic PDEs, and we discuss it after stating our main theorem. It is noteworthy that we only need the DKP condition (\refass{ass:largeDKP}) for the uniqueness part of our main theorem, but not for the first two statements which deal with the existence of solutions to the mixed \(L^p\) boundary value problem. Note also that we postpone the definition of the involved function spaces \(\mathcal{H}^1,\dot{HS}^1, \dot{L}^p_1\) to Section \ref{section:def} and Section \ref{section:ExistenceProof}. Now the main theorem is the following:

\begin{thm}\label{MainThm}
    Let \(\Omega\) be a bounded Lipschitz domain with Lipschitz constant \(l\), \(L\) be an elliptic operator as in \eqref{eq:DefEllipticOperator}, and \(p_0>2\) the reverse Hölder exponent from \reflemma{lemma:RevHolderBoundary}. Assume that the geometric condition \refass{ass:corkscrew} on \(\mathcal{D}\) holds and let \(1<p<\infty\).
    If we assume that \((R)_p\) and \((N)_p\) are solvable, then the following statements are true:

    \begin{enumerate}[(a)]
        \item\label{item:MainThmA} (\(L^1\) existence) There exists \(\EPS_0=\EPS_0(p_0)>0\) such that if \refass{ass:EPS0} holds for \(\EPS_0\), then for all \(g_{\mathcal{N}}\in \mathcal{H}^1(\mathcal{N})\) and \(g_\mathcal{D}\in \dot{HS}_1(\mathcal{D})\), there exist a weak solution \(u\in W_{\text{loc}}^{1,2}(\Omega)\) to the mixed boundary value problem \eqref{eq:MixedPDE}, such that
        \begin{align}\Vert \tilde{N}(\nabla u)\Vert_{L^1(\partial\Omega)}\lesssim \Vert g_{\mathcal{N}}\Vert_{\mathcal{H}^1(\mathcal{N})}+\Vert g_{\mathcal{D}}\Vert_{\dot{HS}^1(\mathcal{D})}.\label{eq:NontangentialInL1WTSBound}\end{align}
        \item\label{item:MainThmC} (\(L^q\) existence) For every \(1<q< \min\{p,p_0/2\}\), there exists \(\EPS_0=\EPS_0(q,p_0)>0\) such that if \refass{ass:EPS0} holds for \(\EPS_0\), then for every \(g_\mathcal{N}\in L^q(\mathcal{N})\) and \(g_\mathcal{D}\in \dot{L}_1^q(\mathcal{D})\), there exist a weak solution \(u\in W_{\text{loc}}^{1,2}(\Omega)\) to the mixed boundary value problem \eqref{eq:MixedPDE}, such that
        \begin{align}\Vert \tilde{N}(\nabla u)\Vert_{L^q(\partial\Omega)}\lesssim \Vert g_\mathcal{N}\Vert_{L^q(\mathcal{N})}+\Vert g_\mathcal{D}\Vert_{\dot{L}_1^q(\mathcal{D})}.\label{eq:NontangentialInLqWTSBound}\end{align}
        \item\label{item:MainThmB} (\(L^1\) uniqueness) If \refass{ass:largeDKP} holds, there exists \(\EPS_0=\EPS_0(l, n,\lambda)>0\) 
        such that if \refass{ass:EPS0} holds for \(\EPS_0\), and if \(u\in W^{1,2}_{\text{loc}}(\Omega)\) is a weak solution to \eqref{eq:MixedPDE} with \(\tilde{N}(\nabla u)\in L^1(\partial\Omega)\), \(g_\mathcal{N}=0\), and \(g_\mathcal{D}=0\), then \(u=0\).

    \end{enumerate}
\end{thm}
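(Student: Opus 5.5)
The plan follows the scheme for the Laplacian in \cite{dong_dirichlet-conormal_2020} and \cite{ott_mixed_2013}: first a Hardy-space ($L^1$) result built from atoms, then interpolation/extrapolation up to $L^q$. Uniqueness comes last and uses the DKP condition.

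For \eqref{item:MainThmA}, I reduce to atomic data by linearity. Take $g_\mathcal{N}$ an $\mathcal{H}^1(\mathcal{N})$-atom supported in a boundary ball $\Delta_r$, or $g_\mathcal{D}$ an atom of $\dot{HS}^1(\mathcal{D})$ whose gradient is supported in $\Delta_r$, with the other datum zero. I build the energy solution $u$ of \eqref{eq:MixedPDE} by Lax--Milgram in the space of $W^{1,2}$ functions vanishing on $\mathcal{D}$. The target is the uniform bound $\|\tilde N(\nabla u)\|_{L^1(\partial\Omega)}\lesssim 1$, obtained from two regions.

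\textbf{Near the atom} ($\partial\Omega\cap\Delta_{Cr}$), I use Hölder together with an $L^2$-type local bound for $\tilde N(\nabla u)$, taken from the localized solvability of $(R)_p$ and $(N)_p$ and the boundary reverse Hölder inequality \reflemma{lemma:RevHolderBoundary} with exponent $p_0$.

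\textbf{Away from the atom} ($\Delta_{2^{k+1}r}\setminus\Delta_{2^kr}$), $u$ has zero mixed data. I need decay of the form
\[
\Big(\fint_{\Delta_{2^kr}}\tilde N(\nabla u)^{q}\Big)^{1/q}\lesssim 2^{-k(n-1+\gamma)}r^{1-n}
\]
for some $\gamma>0$, which sums over $k$. This decay has two sources. First, the $\dot W^{-1,2}$ size of the atom, which is controlled via its vanishing mean or through the Green function. Second, Hölder continuity of $u$ at the interface. This is where \refass{ass:corkscrew} on $\mathcal{D}$ and the smallness \refass{ass:EPS0} enter: since $\Lambda$ is close to a Lipschitz graph in $m$ variables, a boundary De Giorgi/reverse Hölder iteration near $\Lambda$ gives $|u(x)|\lesssim (\mathrm{dist}(x,\Lambda)/R)^{\beta}$ with $\beta$ close to $1/2$. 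The function $\tilde N(\nabla u)$ on a dyadic annulus is then controlled by localizing $(R)_p$ on the Dirichlet part and $(N)_p$ on the Neumann part. Near $\Lambda$, I apply a Whitney-type decomposition of the boundary relative to $\Lambda$, applying the pure problems on balls of radius comparable to $\mathrm{dist}(\cdot,\Lambda)$ and using the reverse Hölder gain $p_0>2$ to absorb the singular behaviour at the interface.

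For \eqref{item:MainThmC}, I prove a weak-type reverse Hölder estimate for $\tilde N(\nabla u)$ for solutions with vanishing data on a surface ball, with exponent $q<\min\{p,p_0/2\}$. I then combine it with the $L^1$/Hardy bound of (a) through a good-$\lambda$ or Shen-type real-variable extrapolation argument. This gives $L^q$ for all $1<q<\min\{p,p_0/2\}$, with $\EPS_0$ depending on $q$. Existence for general data follows by density of atoms and by taking limits of energy solutions, which converge in $W^{1,2}_{loc}$.

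For \eqref{item:MainThmB}, I use duality. Given $u$ with zero data and $\tilde N(\nabla u)\in L^1$, I test against the Green function for the mixed problem of the adjoint $L^*$ with pole at $X$. Green's formula on approximating domains $\Omega_\epsilon$ gives $u(X)$ as a boundary integral over $\partial\Omega_\epsilon$, which tends to $0$ as $\epsilon\to 0$. This requires $\tilde N(\nabla G^*)$ and $G^*$ to be bounded near $\partial\Omega$ away from $X$ with nontangential convergence, so that dominated convergence applies with the $L^1$ function $\tilde N(\nabla u)$. The large DKP condition is used precisely here: it gives $L^\infty$/BMO-type control of $\nabla G^*$, since for DKP operators $(R)_{p'}$ for $L^*$ holds with $p'$ large, giving sufficiently regular adjoint solutions.

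I expect the main obstacle to be the off-diagonal decay estimate in (a). It is where the variable coefficients prevent the use of layer potentials and explicit harmonic functions. Everything must be done through localized pure problems and the reverse Hölder exponent $p_0$, and the exponent bookkeeping must produce a strictly positive $\gamma$ under the smallness $\EPS_0$.
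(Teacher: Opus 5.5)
\textbf{Parts (a) and (b).} Your plan here follows the paper's route. You reduce to atoms and prove a local estimate near $\Lambda$ from a Whitney decomposition of $\partial\Omega$ relative to $\Lambda$, the localized pure problems and the reverse H\"older exponent $p_0$. That is where $q<\min\{p,p_0/2\}$ and $\EPS_0(q,p_0)$ come from. You then get dyadic decay away from the atom and finish with a Shen-type real-variable argument whose ``good part'' is controlled by (a).

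Two corrections are needed. First, a H\"older exponent close to $1/2$ at $\Lambda$ is not available for bounded measurable coefficients. De Giorgi--Nash--Moser gives only some small $\beta>0$, and that suffices. Second, the factor $2^{-\beta k}$ comes from H\"older continuity of an \emph{adjoint} solution, not of $u$. The paper bounds the $L^2$ norm of $\nabla u$ on the far region by duality as $\int_{\Delta_r}g_\mathcal{N}(v-v(x_0))\,d\sigma$. Here $v$ solves $L^*v=\mathrm{div}\,h$ with zero mixed data, $h$ is supported at distance $\approx 2^kr$, and $x_0\in\mathcal{D}$ if $\Delta_r$ meets $\mathcal{D}$; otherwise the mean zero of the atom is used. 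Your ``Green function'' remark is this same mechanism.

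\textbf{The genuine gap is in (c).} Your Green's formula argument on $\Omega_\epsilon$ needs $\tilde N(\nabla G^*)$ bounded near $\partial\Omega$ away from the pole, and this fails for the mixed problem. At $\Lambda$ the gradient of a solution with zero mixed data is unbounded already for the Laplacian: $\mathrm{Re}\sqrt{z}$ has $|\nabla|\sim|z|^{-1/2}$. For variable coefficients all you have is H\"older continuity of $G^*$, hence $|\nabla G^*|\lesssim\delta^{\beta-1}$ in an averaged sense via Caccioppoli, plus $L^{p_0}$ integrability. Paired with $\tilde N(\nabla u)$, which is only in $L^1$, the boundary integrals over $\partial\Omega_\epsilon$ are not dominated.

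Your appeal to DKP also runs backwards. Large DKP gives $\omega\in A_\infty$, i.e.\ $(D)_p$ for some possibly large $p$, hence regularity only for exponents near $1$. In any case, a pure Dirichlet regularity statement says nothing about $\nabla G^*$ at $\Lambda$.

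\textbf{The missing idea} is to avoid the boundary behaviour of $\nabla v$ altogether. The paper takes $v\in W^{1,2}(\Omega)$ solving $L^*v=1_Q$ with zero mixed data. It tests the weak formulation for $v$ with the $W^{1,2}$ function $w_h=\fint_h^{2h}((u\chi_k)\circ\tau_t)\eta_t\,dt$. Here $\tau_t$ translates inward, and $\eta_t$ vanishes on a sawtooth region over $\mathcal{D}$ with $|\nabla\eta_t|\lesssim\tilde\delta^{-1}$ near $\Lambda$. Since $w_h=0$ on $\mathcal{D}$, no boundary term appears.

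The error terms are then bounded by $t^\beta\|\tilde N(\nabla u)\|_{L^1(\partial\Omega)}$ using only three ingredients:
\begin{enumerate}
\item Caccioppoli and boundary H\"older continuity of $v$;
\item the bound $|u\circ\tau_t|\lesssim t\,\tilde N(\nabla u)$ over $\mathcal{D}$, via the corkscrew condition near $\Lambda$;
\item \refass{ass:EPS0} with $\EPS_0\le\beta$, to integrate $\tilde\delta^{-2+\beta}$ over Carleson regions at $\Lambda$.
\end{enumerate}
The zero Neumann data of $u$ is used only distributionally, against smooth uniform approximants of the continuous $v$. DKP enters solely to handle the commutator $(A-A\circ\tau_t)\nabla(u\circ\tau_t)\cdot\nabla v$, through $|A-A\circ\tau_t|\le t\sup|\nabla A|$ and the Carleson condition.
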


The key assumption in \refthm{MainThm} is that we assume, a priori, solvability of \((N)_p\) and \((R)_p\), the pure Neumann and Dirichlet regularity boundary value problem, for some \(1<p<\infty\). Naturally, we can only expect \(L^q\) solvability for the mixed boundary value problem, when the pure problems are already solvable. Interestingly, \eqref{item:MainThmA} and \eqref{item:MainThmC} of \refthm{MainThm} state that these are the only tools needed apart from the geometric assumptions on the interface (see Section \ref{section:Geometry} for details). We obtain \(L^q\) solvability in the range \(q\in [1,\min\{p,p_0/2\})\), which recovers the sharp solvability range result for the Laplacian from \cite{dong_dirichlet-conormal_2020} (see \refrem{rem:DependenciesOfqp0} for details).
\medskip

To apply \refthm{MainThm}, we need to address the question when the pure boundary value problems \((R)_p\) and \((N)_p\) are solvable. This question, in particular for the Dirichlet problem \((D)_p\) (see Section \ref{section:def} for details), has a rich history and one of its most famous sufficient conditions is the (large) DKP-condition. Its study began with \cite{dahlberg_estimates_1977} and \cite{dahlberg_poisson_1979}, where Dahlberg solved \((D)_2\) for the Laplacian in Lipschitz domains and observed that if one flattens the Lipschitz boundary in a conformal way, solving \((D)_2\) for the Laplacian on a Lipschitz domain is equivalent to solving \((D)_2\) for the elliptic operator \(L=\mathrm{div}(A\nabla \cdot)\) on a flat domain. The coefficients of the symmetric matrix \(A\) were only depending on the conformal mapping, and in particular, they satisfied the large DKP condition. In 1984 Dahlberg posed the question whether the solvability of \((D)_2\) or \((D)_p\) for some \(p\) holds for all elliptic operators whose matrices satisfy the DKP condition.
The answer was given in \cite{kenig_dirichlet_2001}, where the authors showed that for any nonsymmetric matrix satisfying the large DKP condition, the elliptic measure \(\omega_L\) lies in the Muckenhoupt space  \(A_\infty(\sigma)\). This is equivalent to saying that \((D)_p\) is solvable for some potentially large \(1<p<\infty\). A little bit later, this result was complemented by \cite{dindos_lp_2007} which introduces the small DKP condition, and shows that for any given \(1<p<\infty\), we have the solvability of \((D)_p\) as long as the DKP constant and the Lipschitz constant of the domain are sufficiently small. More precisely, we can assume the following:

\begin{ass}[Small DPR condition]\label{ass:smallDPR}
    There exists a small \(\EPS>0\) such that
\begin{align}
    \sup_{\Delta\subset \partial\Omega}\sigma(\Delta)^{-1}\int_{T(\Delta)}\frac{(\mathrm{osc}_{B_{\delta(x)/2}(x)}A)^2}{\delta(x)} dx\leq \EPS,\label{eq:SmallDPRcondition}
\end{align}
    where the supremum is taken over all boundary balls \(\Delta\subset\partial\Omega\), and
    \[\mathrm{osc}_{B_{\delta(x)/2}(x)}A:=\sup_{y,z\in B_{\delta(x)/2}(x)}|A(z)-A(y)|.\]
\end{ass}
\begin{ass}[Small DKP condition]\label{ass:smallDKP}
    There exists a small \(\EPS>0\) such that
\begin{align}
    \sup_{\Delta\subset \partial\Omega}\sigma(\Delta)^{-1}\int_{T(\Delta)}\mathrm{sup}_{B_{\delta(x)/2}(x)}|\nabla A|^2\delta(x) dx\leq \EPS,\label{eq:SmallDKPcondition}
\end{align}
    where the supremum is taken over all boundary balls \(\Delta\subset\partial\Omega\), and
    \[\mathrm{osc}_{B_{\delta(x)/2}(x)}A:=\sup_{y,z\in B_\delta(x)/2}|A(z)-A(y)|.\]
\end{ass}
As for the large DPR condition, \eqref{eq:SmallDPRcondition} can be compared to the small DKP condition. 
We can again note that the small DPR condition applies to more operators than the small DKP condition.
\medskip

As turned out, the DKP/DPR conditions also have applications to the regularity and the Neumann problem. Under the DPR condition, the solvability range of the regularity problem is dual to the solvability range of the Dirichlet problem (cf. \cite{dindos_etal_regularity_2023}, \cite{mourgoglou_solvability_2025}). This in particular means that we have small and large DPR condition results for \((R)_p\). The small DPR condition case has previously been dealt with in \cite{dindos_elliptic_2010} in dimension \(n=2\) and later in \cite{dindos_boundary_2017} for all \(n\), where the authors also deal with the small DPR/DKP condition for the Neumann problem \((N)_p\). As for the regularity problem, under the small DPR/DKP conditon, the solvability range for \(p\) for \((N)_p\) is dual to the solvability range of the Dirichlet problem. However, whether the large DKP or DPR condition implies solvability of the Neumann problem \((N)_p\) for some \(p>1\) is still an open question in general. Only in the special case of an unbounded Lipschitz graph domain and in \(n=2\), we know that \((N)_p\) is solvable assuming the large DPR/DKP condition (cf. \cite{dindos_etal_regularity_2023}). However, this argument relies on a change of variable argument that relates the Neumann problem to the regularity problem, but only works in \(n=2\).
\medskip

Now, under the small DPR/DKP condition, we obtain the following corollary from \refthm{MainThm}, which gives a class of variable coefficient matrices for which the \(L^q\) mixed boundary value problem is uniquely solvable:

\begin{cor}\label{MainCorForSmallDKP}
    Let \(\Omega\) be a bounded Lipschitz domain with Lipschitz constant \(l\), \(L\) be an elliptic operator as in \eqref{eq:DefEllipticOperator}, and \(p_0>2\) the reverse Hölder exponent from \reflemma{lemma:RevHolderBoundary}. Let us assume that the geometric condition \refass{ass:corkscrew} on \(\mathcal{D}\) holds and let \(1<p<\infty\). Then there exists \(\EPS=\EPS(p)>0\), such that if \refass{ass:smallDPR} holds for \(\EPS\) and \(l<\EPS\), then the following statements are true:

    \begin{enumerate}[(a)]
        \item \label{item:MainCorA} (\(L^1\) existence) There exists \(\EPS_0=\EPS_0(p_0)>0\) 
        such that if \refass{ass:EPS0} holds for \(\EPS_0\), then for all \(g_{\mathcal{N}}\in \mathcal{H}^1(\mathcal{N})\) and \(g_\mathcal{D}\in \dot{HS}^{1}(\mathcal{D})\), there exist a weak solution \(u\in W_{\text{loc}}^{1,2}(\Omega)\) to the mixed boundary value problem \eqref{eq:MixedPDE} such that
        \[\Vert \tilde{N}(\nabla u)\Vert_{L^1(\partial\Omega)}\lesssim \Vert g_{\mathcal{N}}\Vert_{\mathcal{H}^1(\partial\Omega)}+\Vert g_{\mathcal{D}}\Vert_{\dot{HS}^1(\partial\Omega)}.\]
        \item \label{item:MainCorC} (\(L^q\) existence) For every \(1<q<\min\{p,p_0/2\}\), there exists \(\EPS_0=\EPS_0(q,p_0)>0\) such that if \refass{ass:EPS0} holds for \(\EPS_0\), then for every \(g_\mathcal{N}\in L^q(\partial\Omega)\) and \(g_\mathcal{D}\in \dot{L}_1^q(\partial\Omega)\) there exist a weak solution \(u\in W_{\text{loc}}^{1,2}(\Omega)\) to the mixed boundary value problem \eqref{eq:MixedPDE} such that
        \[\Vert \tilde{N}(\nabla u)\Vert_{L^q(\partial\Omega)}\lesssim \Vert g_\mathcal{N}\Vert_{L^q(\partial\Omega)}+\Vert g_\mathcal{D}\Vert_{\dot{L}_1^q(\partial\Omega)}.\]
        \item \label{item:MainCorB} (\(L^1\) uniqueness) If \refass{ass:smallDKP} holds instead of \refass{ass:smallDPR}, then there exists \(\EPS_0=\EPS_0(l, n,\lambda)>0\) such that if \refass{ass:EPS0} holds for \(\EPS_0\), and if \(u\in W^{1,2}_{\text{loc}}(\Omega)\) is a weak solution to \eqref{eq:MixedPDE} with \(\tilde{N}(\nabla u)\in L^1(\partial\Omega), g_\mathcal{N}=0\) and \(g_\mathcal{D}=0\), then \(u=0\).

    \end{enumerate}
\end{cor}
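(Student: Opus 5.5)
The corollary follows from \refthm{MainThm} once we verify its standing hypothesis: for the given \(1<p<\infty\), both \((R)_p\) and \((N)_p\) are solvable under the small DPR (resp. small DKP) condition and a small Lipschitz constant. Parts \eqref{item:MainCorA} and \eqref{item:MainCorC} are then \eqref{item:MainThmA} and \eqref{item:MainThmC} of \refthm{MainThm}. The restriction of the data to \(\mathcal{N}\) and \(\mathcal{D}\) costs nothing, since \(\Vert g_\mathcal{N}\Vert_{L^q(\mathcal{N})}\le \Vert g_\mathcal{N}\Vert_{L^q(\partial\Omega)}\), and similarly for the other norms. Part \eqref{item:MainCorB} is \eqref{item:MainThmB}; we only need that the small DKP condition implies the large one, which is immediate because \eqref{eq:SmallDKPcondition} gives \eqref{eq:largeDKPcondition} with \(C=\EPS\). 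It also implies the small DPR condition via the pointwise bound \((\mathrm{osc}_{B_{\delta(x)/2}(x)}A)^2/\delta(x)\le \sup_{B_{\delta(x)/2}(x)}|\nabla A|^2\delta(x)\) noted in the introduction, so the existence machinery is still available.

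\textbf{Step 1: the regularity problem.} For the regularity problem we use the literature quoted above. By \cite{dindos_lp_2007}, for every \(1<p'<\infty\) there is \(\EPS(p')\) such that the small DKP/DPR condition with constant \(\EPS\) together with \(l<\EPS\) gives solvability of \((D)_{p'}\). We take \(p'\) to be the dual exponent of \(p\). Then the duality between the solvability ranges of the Dirichlet and regularity problems under the DPR condition (\cite{dindos_etal_regularity_2023}, \cite{mourgoglou_solvability_2025}, and in the small regime \cite{dindos_boundary_2017}) yields \((R)_p\). Since that duality is usually stated for the adjoint operator, we apply the small-DPR Dirichlet result to \(L^*\). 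This is legitimate because \(A^T\) has the same oscillation, so \eqref{eq:SmallDPRcondition} holds for \(A^T\) with the same constant.

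\textbf{Step 2: the Neumann problem.} By \cite{dindos_boundary_2017}, under the small DPR/DKP condition with small Lipschitz constant, \((N)_p\) is solvable for any prescribed \(1<p<\infty\), provided the smallness parameter depends on \(p\). We therefore choose \(\EPS(p)\) to be the minimum of the thresholds from Steps 1 and 2.

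The main obstacle is checking that the cited solvability results are formulated in exactly our setting. We need bounded Lipschitz domains rather than graph domains, nonsymmetric \(A\), the mean-valued nontangential maximal function \(\tilde N\), and the definitions of \((R)_p\) and \((N)_p\) from Section \ref{section:def}. If one reference is stated only for graph domains or with the DKP rather than the DPR condition, we would localize using the bounded Lipschitz structure, which is a standard partition-of-unity argument, and use the DPR-based version in \cite{dindos_etal_regularity_2023} for \((R)_p\). The constants \(\EPS_0\) are inherited unchanged from \refthm{MainThm}; in particular \(\EPS_0(l,n,\lambda)\) in \eqref{item:MainCorB} is well defined since \(l<\EPS\) is fixed.
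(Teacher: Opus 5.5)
Your proposal is correct and follows the paper's argument. The paper likewise uses the small DPR condition together with small $l$ only to invoke \refprop{prop:SolvabilityofDRNforDKP}, which packages the solvability of $(R)_p$ and $(N)_p$ that you assemble from \cite{dindos_lp_2007}, Dirichlet--regularity duality, and \cite{dindos_boundary_2017}, and then applies \refthm{MainThm}; for the uniqueness part \eqref{item:MainCorB}, your observation that small DKP implies both the large DKP condition and the small DPR condition is exactly what is needed to apply \refthm{MainThm}~\eqref{item:MainThmB}.
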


The article is organized as follows: In Section \ref{section:Geometry}, we discuss the geometric assumption that we need to make on the domain and on the interface, before we make further definitions and introduce the pure Dirichlet, regularity, and Neumann problems in Section \ref{section:def}. Section \ref{section:Prelims} deals with preliminary properties of solutions. After that, Section \ref{section:LocalEstimates} establishes local estimates of the nontangential maximal function that are used in Section \ref{section:ExistenceProof} for the proof of \refthm{MainThm} \eqref{item:MainThmA} and \eqref{item:MainThmC}, the existence of solutions to the \(L^1\) and \(L^q\) mixed boundary value problem. Finally, Section \ref{section:UniquenessProof} contains the proof of the uniqueness of such solutions, i.e., the proof of \refthm{MainThm} \eqref{item:MainThmB}.

\section{Geometry of the domain and interface}\label{section:Geometry}

For an interior point \(x\in \Omega\), we set \(\delta(x):=\mathrm{dist}(x, \partial\Omega)\) as the distance of \(x\) to the boundary, and we set \(\tilde{\delta}(x):=\mathrm{dist}(x, \Lambda)\) as the distance to the interface. We will denote balls centered at \(x\in\mathbb{R}^n\) with radius \(r>0\) as \(B_r(x)\), boundary balls centered at \(x\in \partial\Omega\) by \(\Delta_r(x):=\partial\Omega\cap B_r(x)\), and its Carleson regions as \(T(\Delta_r(x)):=\Omega\cap B_r(x)\).
\smallskip

Next, we define a Lipschitz domain.
\begin{defin}
    The set \(\mathcal{Z}\subset\mathbb{R}^n\) is an \(l\)-cylinder of diameter \(d\) if there exists an orthogonal coordinate system \((y,t)\in\mathbb{R}^{n-1}\times\mathbb{R}\) such that
    \[\mathcal{Z}=\{(y,t):|y|<d,|t|<2ld\}.\]
\end{defin}

\begin{defin}[Lipschitz domain]\label{def:Lipschitzdomain}
    We say \(\Omega\subset\mathbb{R}^n\) is a \textit{Lipschitz domain} with character \((l,N,d)\) if there are \(l\)-cylinders \(\{\mathcal{Z}_j\}_{j=1}^N\) of diameter \(d\) satisfying the following conditions:
    \begin{enumerate}[(i)]
        \item \(8\mathcal{Z}_j\cap\partial\Omega\) is the graph of a Lipschitz function \(\phi_j\), \(\Vert\nabla \phi_j\Vert_\infty\leq l;\phi_j(0)=0\),
        \item \(\partial\Omega=\bigcup_{j=1}^N(\mathcal{Z}_j\cap\partial\Omega)\),
        \item \(\mathcal{Z}_j\cap\Omega\supset\Big\{(x,t)\in \Omega:|x|<d, \delta(x,t)\leq \frac{d}{2}\Big\}.\)
        \item Each cylinder \(\mathcal{Z}_j\) contains points from \(\Omega^C=\mathbb{R}^n\setminus\Omega\).
    \end{enumerate}
\end{defin}

\begin{ass}[Interior corkscrew condition on \(\mathcal{D}\)]\label{ass:corkscrew}
    We assume that \(\mathcal{D}\) satisfies the interior corkscrew condition, i.e., there exists \(R_0,M>0\) such that for all \(x\in \mathcal{D}\) and all \(0<r<R_0\) there exists \(y\in \mathcal{D}\) such that \(\Delta_{r/M}(y)\subset \mathcal{D}\cap \Delta_r(x)\).
\end{ass}

The next assumption depends on \(\EPS_0>0\) which will be chosen sufficiently small later.

\begin{ass}[\(\EPS_0\)]\label{ass:EPS0}
    Assume that there exists \(R_0>0\) and \(\EPS_0\geq 0\) such that for any \(s>-1+\EPS_0,r<R_0\) and \(x\in \Lambda\)
    \begin{align*}
        \fint_{\Delta_r(x)}\tilde{\delta}^s d\sigma\approx r^{s}\quad \textrm{and}\quad \fint_{T(\Delta_r(x))}\tilde{\delta}^{s-1} d\sigma\approx r^{s-1}  .
    \end{align*}
\end{ass}

\begin{rem}
    \refass{ass:EPS0} is an assumption on the geometry of the interface \(\Lambda\). For instance, if \(\Lambda\) is Reifenberg flat, then \refass{ass:EPS0} is satisfied and the flatter the interface is, the smaller is \(\EPS_0\).
\end{rem}

\section{Further definitions}\label{section:def}

For a set \(E\subset\mathbb{R}^n\) we are going to use \(1_E\) as the indicator function on the set \(E\).

 We define a nontangential cone with aperture \(\alpha>1\)  by \[\Gamma_\alpha(x):=\{y\in\Omega: |x-y|<\alpha \delta(y)\}.\]
    The set \(\Gamma_\alpha^\tau(x):=\Gamma_\alpha(x)\cap\{y\in\Omega: \delta(y)\leq \tau\}\) denotes a cone that is truncated at height \(\tau>0\). Furthermore, we set the mean-valued nontangential maximal function as
    \[\tilde{N}_\alpha(F)(x):=\sup_{y\in\Gamma_\alpha(x)}\Big(\fint_{B_{\delta(y)/2}(y)}|F(z)|^2\,dz\Big)^{1/2}. \]
    In addition to that, we define a truncated version of the nontangential maximal function as 
    \[\tilde{N}_{\alpha,\tau}(F)(x):=\sup_{y\in\Gamma^\tau_\alpha(x)}\Big(\fint_{B_{\delta(y)/2}(y)}|F(z)|^2\,dz\Big)^{1/2}, \]
    and the away truncated version for \(\tau>0\) as
    \begin{align}\tilde{N}_{\alpha}^\tau(F)(x):=\sup_{y\in\Gamma_\alpha(x)\cap\{y\in \Omega: \delta(y)\geq \tau \}}\Big(\fint_{B_{\delta(y)/2}(y)}|F(z)|^2\,dz\Big)^{1/2}. \label{def:NontangentialMaximalFucntionTruncatedAway}\end{align}
    When it is clear from context or irrelevant for the argument, we might drop the subscript \(\alpha\) for the aperture.
    
    The Carleson function is defined as
    \begin{align}\tilde{\mathcal{C}}(F)(x):=\sup_{r>0}\frac{1}{|\Delta_r(x)|}\int_{T(\Delta_r(x))}\Big(\fint_{B_{\delta(y)/2}(y)}|F(y,t)|^2\Big)^{1/2}\frac{dy}{\delta(y)}\label{recall:carelsonfunctiondef}\end{align}
    for \(x\in \partial\Omega=\mathbb{R}^n\), and \(|\Delta_r(x)|\approx r^{n-1}\) denotes the surface measure of a boundary ball.

    An important connection between the mean valued nontangential maximal function and the Carleson function is given by the following proposition.
    \begin{prop}[\cite{mourgoglou_solvability_2025}]\label{prop:DualityNontangnetialCarleson}
        For any \(\alpha>1, p\in [1,\infty)\) and for any \(u\) and \(F\) such that \(\Vert \tilde{N}_\alpha(u)\Vert_{L^p(\partial\Omega)}\) and  \(\Vert \tilde{\mathcal{C}}(\delta F)\Vert_{L^{p'}(\partial\Omega)}\) are finite, we have
        \begin{align}\Big|\int_\Omega u F dx\Big|\lesssim_\alpha \Vert \tilde{\mathcal{C}}(\delta F)\Vert_{L^{p'}(\partial\Omega)}  \Vert \tilde{N}_\alpha(u)\Vert_{L^p(\partial\Omega)}. \end{align}
        Moreover, we have
        \begin{align}
            \Vert \tilde{N}_\alpha(u)\Vert_{L^p(\partial\Omega)}\lesssim_\alpha\sup_{F:\Vert \tilde{\mathcal{C}}(\delta F)\Vert_{L^{p'}(\partial\Omega)}=1} \Big|\int_\Omega uF dx\Big|.\label{eq:DualityNontangentialCarelsonFct}
        \end{align} 
    \end{prop}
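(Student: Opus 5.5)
The plan is a Whitney decomposition of \(\Omega\) combined with a tent-space (Coifman--Meyer--Stein) duality argument, with boundary balls replacing the flat geometry.

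\textbf{Step 1: discretize.} Fix a Whitney-type decomposition of \(\Omega\) into cubes \(Q\) with \(\mathrm{diam}(Q)\approx\delta(y)\) for \(y\in Q\). Set
\[a_Q(u):=\Big(\fint_{Q^*}|u|^2\Big)^{1/2},\qquad b_Q(F):=\Big(\fint_{Q^*}|F|^2\Big)^{1/2},\]
where \(Q^*\) is a slight enlargement of \(Q\). By Cauchy--Schwarz on each cube,
\[\Big|\int_\Omega uF\,dx\Big|\le\sum_Q |Q|\,a_Q(u)\,b_Q(F).\]

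\textbf{Step 2: convert to a boundary integral.} For each \(Q\), let \(\Delta_Q\subset\partial\Omega\) be its shadow, the set of boundary points \(x\) whose cone \(\Gamma_\alpha(x)\) meets \(Q\). Then \(\sigma(\Delta_Q)\approx \ell(Q)^{n-1}\), so \(|Q|\approx \ell(Q)\,\sigma(\Delta_Q)\). This gives
\[\sum_Q|Q|\,a_Q b_Q\approx\int_{\partial\Omega}\sum_{Q:\,x\in\Delta_Q}\ell(Q)\,a_Q(u)\,b_Q(F)\,d\sigma(x).\]
This is now a standard tent-space pairing: \(\sup_{Q\ni\Gamma(x)} a_Q\lesssim\tilde N_\alpha(u)(x)\) (after adjusting apertures), and the family \(\ell(Q)b_Q(F)\) is controlled by the Carleson function of \(\delta F\).

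\textbf{Step 3: prove the first inequality.} Use the stopping-time argument behind the estimate \(\int\sum|\lambda_Q\mu_Q|\lesssim\int N(\lambda)\,C(\mu)\). Let \(E_k=\{\tilde N u>2^k\}\), and let \(Q\) belong to generation \(k\) if \(a_Q\approx 2^k\). Such cubes lie in the tent over \(\{\tilde N u>2^{k-1}\}\), and that tent is covered by tents over boundary balls \(\Delta_j\) of bounded overlap; covering \(E_k\) by Whitney balls of \(\partial\Omega\) handles the covering. The Carleson bound gives
\[\sum_{Q\subset T(\Delta_j)}\ell(Q)\,|Q|\,b_Q\lesssim \sigma(\Delta_j)\inf_{\Delta_j}\tilde{\mathcal C}(\delta F),\]
more precisely an integral of \(\tilde{\mathcal C}\) over an enlarged region of \(E_k\). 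Summing over \(k\) yields
\[\Big|\int uF\Big|\lesssim\int_{\partial\Omega}\tilde N(u)\,\tilde{\mathcal C}(\delta F)\,d\sigma,\]
and Hölder finishes. The \(L^1\)/\(L^\infty\) endpoint follows the same way.

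\textbf{Step 4: the converse.} Linearize the supremum: for a.e. \(x\) choose \(y(x)\in\Gamma_\alpha(x)\) nearly attaining \(\tilde N u(x)\), then dualize the \(L^2\) average and the \(L^p\) norm. Concretely, choose \(h\in L^{p'}(\partial\Omega)\) with \(\|h\|_{L^{p'}}=1\) and \(\int \tilde N u\,h\approx\|\tilde N u\|_p\), and \(\phi_x\) in the unit ball of \(L^2\) on \(B_{\delta(y(x))/2}(y(x))\), normalized so that \(\fint u\,\phi_x\) equals the \(L^2\) average. Discretize by assigning each \(x\) to the Whitney cube \(Q(x)\) containing \(y(x)\), and set
\[F:=\sum_Q \Big(\int_{\{x:Q(x)=Q\}}h\,d\sigma\Big)\frac{\phi_Q}{|Q|},\]
with \(\phi_Q\) chosen as a sign/average-compatible \(L^2\)-normalized function. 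Then \(\int uF\) is comparable to \(\int \tilde N u\,h\).

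The remaining task is to check \(\|\tilde{\mathcal C}(\delta F)\|_{L^{p'}}\lesssim 1\). Since \(\delta F\) on \(Q\) has size about \(\ell(Q)\,|Q|^{-1}\int_{A_Q}|h|\), where \(A_Q=\{x:Q(x)=Q\}\subset\Delta_Q\), we get for \(z\in\partial\Omega\)
\[\tilde{\mathcal C}(\delta F)(z)\lesssim\sup_r \sigma(\Delta_r)^{-1}\sum_{Q\subset T(\Delta_r(z))}\int_{A_Q}|h|.\]
When \(p>1\), the \(A_Q\) associated with a fixed \(x\) are a single cube, so they are disjoint. Hence this is bounded by the Hardy--Littlewood maximal function \(M h(z)\), which is bounded on \(L^{p'}\). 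For \(p=1\), \(p'=\infty\) and the bound is trivial.

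\textbf{Main obstacle.} The hardest part is the stopping-time/covering in Step 3 on Lipschitz boundaries: we must relate tents over open subsets of \(\partial\Omega\) to unions of Carleson regions \(T(\Delta)\), and show that cubes with \(a_Q>2^{k}\) have shadows inside \(\{\tilde N u>2^{k}\}\) with uniform aperture loss. I would handle this with the bounded number of local Lipschitz graph charts, reducing to the half-space case where the classical argument applies verbatim.
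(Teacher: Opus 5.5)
The paper gives no proof of this proposition; it imports it from \cite{mourgoglou_solvability_2025}, so there is no in-paper argument to compare yours with. Your proof is correct and follows the standard route to this $\tilde N$--$\tilde{\mathcal C}$ duality. For the first inequality, a Coifman--Meyer--Stein layer-cake argument over Whitney cubes, together with a Whitney covering of the open sets $\{\tilde N_\alpha u>c2^k\}\subset\partial\Omega$, gives the pointwise-integrated bound $\int_\Omega|uF|\lesssim\int_{\partial\Omega}\tilde N_\alpha(u)\,\tilde{\mathcal C}(\delta F)\,d\sigma$. This yields the first inequality for every $p\in[1,\infty)$. For the converse, the linearized and discretized test function satisfies $\tilde{\mathcal C}(\delta F)\lesssim Mh$, because the sets $A_Q$ partition $\partial\Omega$ and each $A_Q$ lies in the shadow of $Q$.

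Some points to tighten when you write it up.

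In Step 3, the quantity the Carleson function controls is $\sum_{Q\subset T(\Delta_j)}|Q|\,b_Q(F)$, not $\sum\ell(Q)|Q|b_Q$, since the weight in the pairing is $|Q|\approx\ell(Q)\sigma(\Delta_Q)$. This bound follows from $b_Q(F)\lesssim(\fint_{B_{\delta(y)/2}(y)}|F|^2)^{1/2}$ for all $y\in Q$. That estimate holds once the Whitney constants are fixed so that $Q^*\subset B_{\delta(y)/2}(y)$ for $y\in Q$, the normalization used in Section~\ref{section:UniquenessProof}. The same normalization gives $a_Q(u)\lesssim\tilde N_\alpha u$ on the shadow with no change of aperture.

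In Step 4, the single function $\phi_Q$ must live on a common enlargement $\widehat Q\supset\bigcup_{y\in Q}B_{\delta(y)/2}(y)$ with $|\widehat Q|\approx|Q|$; for instance, take $\phi_Q=u1_{\widehat Q}/(\fint_{\widehat Q}|u|^2)^{1/2}$. Then $|Q|^{-1}\int u\phi_Q\gtrsim\tilde N_\alpha u(x)$ for every $x\in A_Q$, and $(\fint_{B_{\delta(y)/2}(y)}|\phi_Q|^2)^{1/2}\lesssim 1$ wherever the two sets meet. This lower bound is all you need; two-sided ``comparability'' would require a wider aperture. Choosing $Q(x)$ as the first cube in a fixed enumeration that nearly attains the supremum makes the $A_Q$ Borel, by lower semicontinuity.

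Neither the disjointness of the $A_Q$ nor the boundedness of $M$ on $L^{p'}$ depends on whether $p>1$, so no case split is needed.

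The ``main obstacle'' needs no graph charts:
\begin{itemize}
\item Whitney coverings of open subsets of the doubling space $\partial\Omega$ exist directly.
\item The shadow of $Q$ contains a surface ball of radius $\approx(\alpha-1)\ell(Q)$, which gives $\ell(Q)\lesssim_\alpha r_j$.
\item When $\{\tilde N_\alpha u>c2^k\}=\partial\Omega$, you simply use $T(\Delta_{\mathrm{diam}\,\Omega}(x))=\Omega$.
\end{itemize}
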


\subsection{The pure Dirichlet, regularity, and Neumann boundary value problem}

We define a weak solution \(u\in W^{1,2}_{\text{loc}}(\Omega)\) to \(Lu=0\) as a function \(u\) that satisfies
\begin{align}
    \int_\Omega A\nabla u\cdot \nabla \phi dx=0 \quad \textrm{for all }\phi\in W^{1,2}_0(\Omega).\label{eq:StandardPDEwithout}
\end{align}
If a weak solution \(u\) attains the trace \(f\) on the boundary, we say that \(u\) has the Dirichlet boundary data \(f\).

We say a weak solution attains Neumann data \(g\), if
\begin{align}
    \int_\Omega A\nabla u\cdot \nabla \phi dx=\int_{\partial\Omega}g\phi dx=\langle g,\phi\rangle_{\partial\Omega} \quad \textrm{for all }\phi\in W^{1,2}(\Omega),\label{eq:NeummanSolution}
\end{align}
where \(\langle\cdot,\cdot\rangle\) is the dual pairing.

As is classical on Lipschitz domains, Lax-Milgram implies that for every \(g\in W^{-1/2,2}(\partial\Omega)\) there exists a weak solution \(u\in W^{1,2}(\Omega)\) with Neumann data \(g\). Furthermore, for every \(f\in W^{1/2,2}(\partial\Omega)\) there there exists a weak solution \(u\in W^{1,2}(\Omega)\) with Dirichlet data \(f\).
\smallskip

Now, let us define the following three boundary value problems with rough, i.e., \(L^p\), boundary data. 
\begin{defin}[\((D)_p^L\)]
    Let \(p>1\), \(f\in L^p(\partial\Omega)\cap W^{1/2,2}(\partial\Omega)\), and let \(u\in W^{1,2}(\Omega)\) be a weak solution (cf. \eqref{eq:StandardPDEwithout}) with Dirichlet boundary data \(f\). If
    \begin{align}
    \Vert \tilde{N}(u)\Vert_{L^p(\partial\Omega)}\lesssim\Vert f\Vert_{L^p},\label{eq:NontangentialEstimatePureDirichlet}
    \end{align}
    where the implied constants are independent of \(u\) and \(f\), then we say that the \(L^p\) \textit{Dirichlet boundary value problem} is solvable. We write \((D)^L_p\) holds.
\end{defin}
For \(L^p\) Dirichlet boundary data that does not lie in the Bessov space \(W^{1/2,2}(\partial\Omega)\) we cannot say that a function \(u\in W^{1,2}_{\text{loc}}(\Omega)\) attains the boundary data in a trace sense, but instead \eqref{eq:NontangentialEstimatePureDirichlet} guarantees that there exists a solution \(u\in W^{1,2}_{\text{loc}}(\Omega)\) that converges nontangentially to \(f\) almost everywhere.
\smallskip

We can also define the \(L^p\) Neumann boundary value problem.
\begin{defin}[\((N)_p^L\)]\label{def:DefintionOfNeumannProblem}
    Let \(p>1\), \(g\in L^p(\partial\Omega)\cap W^{-1/2,2}(\partial\Omega)\) with \(\int_{\partial\Omega}g d\sigma=0\), and let \(u\in W^{1,2}(\Omega)\) be a weak solution (cf. \eqref{eq:StandardPDEwithout}) with Neumann data \(g\), i.e., \(u\) satisfies \eqref{eq:NeummanSolution}. If
    \begin{align}\Vert \tilde{N}(\nabla u)\Vert_{L^p(\partial\Omega)}\lesssim\Vert g\Vert_{L^p(\partial\Omega)},\label{eq:NontangentialEstiamteInPureNeumann}\end{align}
    where the implied constants are independent of \(u\) and \(g\), then we say that the \(L^p\) \textit{Neumann boundary value problem} is solvable for \(L\). We write \((N)^L_p\) holds. If \(p=1\), we replace the \(L^1\)-norm of \(g\) with the Hardy \(\mathcal{H}^1\)-norm (see \refdef{def:HardySpaces}).
\end{defin}

Furthermore, we call a Borel function \(g:\partial\Omega\to\mathbb{R}\) a \textit{Haj\l{}asz upper gradient} of \(f:\partial\Omega\to\mathbb{R}\) if
\[|f(X)-f(Y)|\leq |X-Y|(g(X)+g(Y))\quad\textrm{for a.e. }X,Y\in \partial\Omega.\]
We denote the collection of all Haj\l{}asz upper gradients of \(f\) as \(\mathcal{D}(f)\) and define \(\dot{L}_1^p(\partial\Omega)\) by all \(f\) with
\[\Vert f\Vert_{\dot{L}^p_1(\partial\Omega)}:= \inf_{g\in\mathcal{D}(f)}\Vert g\Vert_{L^p(\partial\Omega)}<\infty.\]
This space is also called \textit{homogeneous Haj\l{}asz Sobolev space}. In the case of a Lipschitz domain, the \(L^p\) norm of the Haj\l{}asz upper gradient of \(f\) is comparable to the norm of the tangential gradient of \(f\) which is defined almost everywhere on the boundary of a Lipschitz domain (see \cite{mourgoglou_regularity_2024} for more details). The tangential gradient can be written as \(\nabla_Tf(x)=\nabla f(x)\cdot \vec{T}(x)\), where \(\vec{T}\) can be any tangent vector on \(\partial\Omega\). 

Now, we can define the regularity problem with boundary data in \(\dot{L}_1^p(\partial\Omega)\).
\begin{defin}[\((R)_p^L\)]\label{def:DefintionOfRegularityProblem}
    Let \(p>1\), \(f\in \dot{L}^p_{1}(\partial\Omega)\cap W^{1/2,2}(\partial\Omega)\), and let \(u\in W^{1,2}(\Omega)\) be a weak solution (cf. \eqref{eq:StandardPDEwithout}) with Dirichlet boundary data \(f\). If
    \begin{align}\Vert \tilde{N}(\nabla u)\Vert_{L^p(\partial\Omega)}\lesssim\Vert f\Vert_{\dot{L}_{1}^p(\partial\Omega)},\label{eq:RPinequality}\end{align}
    where the implied constants are independent of \(u\) and \(f\), then we say that the \(L^p\) \textit{regularity boundary value problem} is solvable for \(L\). We write \((R)^L_p\) holds for \(L\). If \(p=1\), we replace the \(\dot{L}^1_1\)-norm of \(g\) with the Hardy-Sobolev \(\dot{HS}^1\)-norm (see \refdef{def:HardySpaces}).
\end{defin}
The regularity problem is also called \textit{Dirichlet regularity} problem, since we prescribe Dirichlet data with higher regularity and expect an nontangential estimate on the gradient of the solution instead of merely the solution.
\medskip

In the present setting under assumption of the small DPR condition, \refass{ass:smallDPR}, all three of these boundary value problems are solvable:

\begin{prop}[\cite{dindos_lp_2007} for Dirichlet, \cite{dindos_boundary_2017} for Regularity and Neumann]\label{prop:SolvabilityofDRNforDKP}
    Let \(\Omega\subset \mathbb{R}^n\) be a bounded Lipschitz domain with Lipschitz constant \(l\) (cf. \refdef{def:Lipschitzdomain}). For every \(1<p<\infty\) there exists a sufficiently small \(\EPS=\EPS(n,\lambda,p)\), such that if the small DPR condition, \refass{ass:smallDPR}, holds for that \(\EPS\) and \(l\leq \EPS\), then \((D)_{p'},(R)_p,\) and \((N)_p\) are solvable.
\end{prop}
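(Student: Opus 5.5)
This proposition is quoted from the literature, so the plan is to reassemble the argument from known ingredients rather than build it from scratch.

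\textbf{Reduction to a flat domain and to the DKP condition.} First we flatten $\partial\Omega$ locally in each cylinder $\mathcal{Z}_j$ from \refdef{def:Lipschitzdomain}. We use the Dahlberg--Kenig--Stein pullback map $(y,t)\mapsto (y, ct+(\theta_t*\phi_j)(y))$. It transforms $L$ into an operator $\mathrm{div}(\bar A\nabla\cdot)$ on the upper half space. The new coefficients $\bar A$ differ from the transported $A$ by terms controlled by $|\nabla\phi_j|\le l$ and by $t|\nabla^2(\theta_t*\phi_j)|$. The latter generates a Carleson measure whose norm is $\lesssim l^2$. Hence, if $l<\EPS$ and \refass{ass:smallDPR} holds for $\EPS$, then $\bar A$ satisfies a small DPR condition with constant $\lesssim \EPS$.

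Next we replace $\bar A$ by a Whitney-scale mollification $\tilde A$. This $\tilde A$ satisfies the small DKP condition \refass{ass:smallDKP}. The difference $\bar A-\tilde A$ is pointwise bounded by $\mathrm{osc}_{B_{\delta/2}}\bar A$, so $|\bar A-\tilde A|^2/\delta$ is a small Carleson measure. By the small Carleson perturbation theory (Fefferman--Kenig--Pipher for $(D)_p$, and its analogues for $(R)_p$ and $(N)_p$), it therefore suffices to treat a small-DKP operator on a flat or nearly flat domain.

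\textbf{Dirichlet problem.} For a small-DKP operator we prove the two-sided estimates $\|S(u)\|_{L^{p'}}\lesssim\|\tilde N(u)\|_{L^{p'}}$ and $\|\tilde N(u)\|_{L^{p'}}\lesssim\|S(u)\|_{L^{p'}}+\|f\|_{L^{p'}}$. The method is integration by parts against $t\,\partial_t$ and a comparison with the constant-coefficient operator obtained by freezing $\tilde A$. Every error term is of the form $\int |\nabla \tilde A|\,t\,|\nabla u|\,|u|$, which is bounded by $\EPS^{1/2}\|\tilde N u\|\,\|S u\|$. Taking $\EPS$ small, these terms are absorbed, and the result is $(D)_{p'}$ for the given $p'$. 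Equivalently, this shows that the elliptic measure lies in $B_{p}$ with constants close to those of the Laplacian.

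\textbf{Regularity and Neumann problems.} For $(R)_p$ we use the duality between $(R)_p$ and $(D)_{p'}$ under the DPR condition (\cite{dindos_etal_regularity_2023}, \cite{mourgoglou_solvability_2025}). Alternatively, we can run the same perturbation scheme on $\nabla u$, using Rellich-type identities. In that scheme the frozen-coefficient operator on a domain of small Lipschitz constant solves $(R)_p$ for the given $p$, and the DKP error terms are again absorbed through Carleson duality, \refprop{prop:DualityNontangnetialCarleson}.

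I expect the Neumann problem to be the main obstacle. Unlike the regularity problem, it has no duality shortcut to $(D)_{p'}$, and the conormal derivative $A\nabla u\cdot\nu$ must be estimated directly. The approach I would try first is the one in \cite{dindos_boundary_2017}. One establishes the Rellich-type equivalence $\|\tilde N(\nabla u)\|_{L^p}\approx\|\partial_\nu^A u\|_{L^p}$ for the frozen operator with $l$ small, up to error terms. These error terms involve $t|\nabla A|$ times $|\nabla u|^2$ (or its $L^p$ analogue through an $L^2$-to-$L^p$ extrapolation via good-$\lambda$ or Shen's real-variable argument). They are bounded by the small DKP constant times $\|\tilde N(\nabla u)\|_{L^p}^p$. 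Choosing $\EPS=\EPS(n,\lambda,p)$ small enough lets us absorb them, which yields \eqref{eq:NontangentialEstiamteInPureNeumann}. Patching the local cylinders together with a partition of unity and the Lax--Milgram $W^{1,2}$ solution then completes the proof on the bounded domain.
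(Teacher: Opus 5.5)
There is a genuine gap: the reduction steps are fine, but the analytic core you describe only works near \(p=2\), while the proposition is about arbitrary \(1<p<\infty\). The paper gives no proof and only cites \cite{dindos_lp_2007} and \cite{dindos_boundary_2017}. Your reductions (pullback, Whitney-scale mollification plus small Carleson perturbation to pass from the oscillation condition to the gradient condition) are standard and not the problem.

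\textbf{Dirichlet step.} The two estimates you list, \(\|S(u)\|_{L^{p'}}\lesssim\|\tilde N(u)\|_{L^{p'}}\) and \(\|\tilde N(u)\|_{L^{p'}}\lesssim\|S(u)\|_{L^{p'}}+\|f\|_{L^{p'}}\), are circular. Neither controls \(S(u)\) or \(\tilde N(u)\) by the data up to a small error. That link comes from integrating \(A\nabla u\cdot\nabla u\) against \(t\), which yields \(\int_{\mathbb{R}^n_+}|\nabla u|^2t\,dX\approx\|S(u)\|_{L^2}^2\). This is an \(L^2\) identity, and your error bound \(\EPS^{1/2}\|\tilde N(u)\|\,\|S(u)\|\) is likewise an \(L^2\) bound. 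It gives \((D)_2\), and by Gehring self-improvement \((D)_q\) for \(q\) in a neighbourhood of \([2,\infty)\). It does not give \((D)_{p'}\) with \(p'\) close to \(1\), which is exactly what is claimed when \(p\) is large.

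The missing idea is the \(p\)-adapted square function of \cite{dindos_lp_2007}. One integrates \(A\nabla u\cdot\nabla(|u|^{p'-2}u)\) against \(t\) to get \(\int_{\mathbb{R}^n_+}|u|^{p'-2}|\nabla u|^2t\,dX\lesssim\|f\|_{L^{p'}}^{p'}+(\text{small})\,\|\tilde N(u)\|_{L^{p'}}^{p'}\). One then bounds \(\tilde N(u)\) in \(L^{p'}\) by this \(p\)-adapted quantity and the data.

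You obtain \((R)_p\) by duality from the Dirichlet problem. What is needed there is \((D)_{p'}\) for \(L^*\), which is harmless since \(A^T\) satisfies the same hypotheses. But the gap therefore propagates to \((R)_p\). The duality route via \cite{dindos_etal_regularity_2023}, \cite{mourgoglou_solvability_2025} is itself a legitimate alternative to the direct argument of \cite{dindos_boundary_2017}, once \((D^*)_{p'}\) is available, for instance by citing \cite{dindos_lp_2007}.

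\textbf{Neumann step.} Here the flaw is more basic. A Rellich-type identity in the \(e_n\) direction produces errors of the form \(\int_{\mathbb{R}^n_+}(\partial_n\tilde A)\nabla u\cdot\nabla u\,dX\). These are \emph{linear} in \(|\nabla\tilde A|\), whereas the DKP condition only controls the quadratic measure \(d\mu=t\sup|\nabla\tilde A|^2\,dX\), and \(|\nabla\tilde A|\,dX\) need not be Carleson.

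For example, take \(A=a(t)I\) with \(a(t)=1+\frac12\sin(\log\log(1/t))\) near \(t=0\). Then \(t|a'|^2\,dt\) has small Carleson norm, but \(\int_0^{t_0}|a'|\,dt=\infty\). For the solution with \(au'=1\), the error integral diverges and \(A\nabla u\cdot\nabla u\) has no boundary trace. So these errors cannot be bounded by the small DKP constant times \(\|\tilde N(\nabla u)\|_{L^p}^p\), as you claim.

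To fix this, you have to exploit cancellation so that the errors become bilinear, of the type \(\int t|\nabla\tilde A|\,|\nabla u|\,|\nabla^2u|\,dX\le\|\mu\|_C^{1/2}\|\tilde N(\nabla u)\|\,\|S(\nabla u)\|\). You then need to control \(S(\nabla u)\) by \(\tilde N(\nabla u)\) in \(L^p\), in a \(p\)-adapted form for \(p\neq 2\). This is not automatic, since \(\nabla u\) does not solve the equation. Estimates of this type are what \cite{dindos_boundary_2017} actually has to prove, and they are absent from your plan.

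Moreover, a Rellich identity is intrinsically \(L^2\). Shen's extrapolation to large \(p\) requires a weak reverse H\"older inequality for \(\tilde N(\nabla u)\) with exponent above \(p\), which you do not supply. As written, your argument covers at most the cases near \(p=2\). For general \(1<p<\infty\) you must either supply these ingredients or cite \cite{dindos_lp_2007} and \cite{dindos_boundary_2017}, as the paper does.
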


\subsection{Convergence to the boundary data in the distributional sense}

It is clear that if \(g_\mathcal{D}\in W^{1/2,2}(\partial\Omega)\), then the solution \(u\in W^{1,2}(\Omega)\) to the Dirichlet problem has \(g_\mathcal{D}\) as trace. Similarly, if \(g_\mathcal{N}\in W^{-1/2,2}(\partial\Omega)\), then for the solution \(u\in W^{1,2}(\Omega)\) to the Neumann problem \(A\nabla u\cdot \nu=g_\mathcal{N}\) can be understood in the weak sense. However, since we would like to consider solutions with rougher \(L^p\)-boundary data, we need to define convergence of \(A\nabla u\cdot\nu\) to \(g_\mathcal{N}\) or \(\nabla_Tu\) to \(\nabla_Tg_\mathcal{D}\) in the sense of distributions: For each cylinder \(\mathcal{Z}\) with coordinates such that \(\mathcal{Z}\cap\Omega=\{(x,t)\in \mathbb{R}^{n-1}\times\mathbb{R}:t>\phi(x),|x|<d,\delta(x,t)\leq \frac{d}{2}\}\) for a Lipschitz function \(\phi\), we can without loss of generality assume that there exists a smooth function \(\theta_\mathcal{Z}\in C^\infty_0(\mathcal{Z})\) such that \(\sum_{\mathcal{Z}}\theta_\mathcal{Z}|_{\partial\Omega}\equiv 1\). For \(u\in W^{1,2}_{loc}(\Omega)\), we say that \(A\nabla u\cdot\nu\) converges to \(g_\mathcal{N}\in L^p(\partial\Omega)\) in the sense of distributions, if
\begin{align}
\lim_{h\to 0}\int_{\partial\Omega}\Big(\fint_{B_{h/2}(x,t+h)}(A\nabla u)dy\Big)\cdot \nu(x,t) \Phi(x,t)\theta_\mathcal{Z}(x,t) d\sigma(x,t)\label{eq:weakConvNeumannData}
\\=\int_{\mathcal{Z}\cap\partial\Omega}g_\mathcal{N}(x,t)\Phi(x,t)\theta_\mathcal{Z}(x,t) d\sigma(x,t)\nonumber\end{align}
for all \(\Phi\in \mathcal{S}(\mathbb{R}^n)\), where \(\mathcal{S}\) is the class of Schwarz functions. Since our domain is bounded, this coincides with taking all \(\Phi\in C^\infty(\overline{\Omega})\). Furthermore, we say \(\nabla_T u\) converges to \(\nabla_Tg_\mathcal{D}\) in the sense of distributions, if 
\begin{align}
\lim_{h\to 0}\int_{\partial\Omega}\Big(\fint_{B_{h/2}(x,t+h)}\nabla udy\Big)\cdot \vec{T}(x,t) \Phi(x,t)\theta_\mathcal{Z}(x,t) d\sigma(x,t)\label{eq:weakConvRegData}\\
=\int_{\mathcal{Z}\cap\partial\Omega}\nabla _T g_\mathcal{D}\Phi(x,t)\theta_\mathcal{Z}(x,t) d\sigma(x,t)\nonumber\end{align}
for all \(\Phi\in C^\infty(\overline{\Omega})\). For the mixed boundary value problem, distributional convergence of \(A\nabla u\cdot \nu\) to the Neumann data \(g_\mathcal{N}\) on \(\mathcal{N}\) and \(\nabla_Tu\) to \(\nabla_Tg_\mathcal{D}\) on \(\mathcal{D}\) means that 
\eqref{eq:weakConvNeumannData} holds for test functions \(\Phi\) with \(\Phi|_\mathcal{D}=0\) and \eqref{eq:weakConvRegData} holds for test functions \(\Phi\) with \(\Phi|_\mathcal{N}=0\)
\smallskip

If the pure Neumann problem \((N)_p\) is solvable, i.e., \eqref{eq:NontangentialEstiamteInPureNeumann} holds, then for every \(g\in L^p(\partial\Omega)\) (replacing with $\mathcal H^1(\partial\Omega)$ when $p=1$) there exist a solution \(u\in W^{1,2}_{\text{loc}}(\Omega)\) that attains the Neumann data \(g\) in the sense of distributions. This follows from the proof of \cite[Theorem 3.1]{kenig_neumann_1993}, which can be generalized to Lipschitz domains. 
\smallskip

Similarly and also by the proof of \cite[Theorem 3.1]{kenig_neumann_1993}, if the pure regularity problem \((R)_p\) is solvable, i.e., \eqref{eq:RPinequality} holds, then for every \(f\in \dot{L}^p_1(\partial\Omega)\) (replacing with $\dot{HS}^1(\partial\Omega)$ when $p=1$) there exist a solution \(u\in W^{1,2}_{\text{loc}}(\Omega)\) that attains the Dirichlet data \(f\) nontangentially almost everywhere, and \(\nabla_T u\) converges to \(\nabla_Tf\) in the sense of distributions.
\smallskip

For the mixed problem, the same arguments imply that if the bound \eqref{eq:NontangentialInLqWTSBound} holds for all \(g_\mathcal{D}\in W^{1/2,2}(\mathcal{D})\cap\dot{L}_1^p(\mathcal{D})\) and \(g_\mathcal{N}\in W^{-1/2,2}(\mathcal{N})\cap L^p(\mathcal{N})\), then for every \(g_\mathcal{D}\in \dot{L}_1^p(\mathcal{D})\) and \(g_\mathcal{N}\in L^p(\mathcal{N})\), there exists a weak solution \(u\in W^{1,2}_{loc}(\Omega)\) to the mixed boundary value problem such that \(A\nabla u\cdot\nu\) converges to \(g_\mathcal{N}\) in the sense of distributions, that \(u\) converges to \(g_\mathcal{D}\) nontangentially almost every where on \(\mathcal{D}\), and that \(\nabla_Tu\) converges to \(\nabla_Tg_\mathcal{D}\) on \(\mathcal{D}\) in the sense of distributions. If \(p=1\), the same statement holds assuming \eqref{eq:NontangentialInL1WTSBound} and replacing \(\dot{L}_1^1(\mathcal{D})\) and \(L^1(\mathcal{N})\) by the corresponding Hardy and Hardy-Sobolev spaces \(\dot{HS}^1(\mathcal{D})\) and \(\mathcal{H}^1(\mathcal{N})\).

\medskip

\section{Preliminaries: Properties of solutions}\label{section:Prelims}

The following two results are standard for solutions with either pure Dirichlet or pure Neumann boundary data. However, their proofs carry through to the case of mixed boundary data completely analogously, as was observed at the beginning of Section 3 in \cite{taylor_mixed_2013}. Since we only work on Lipschitz domains, we do not aim to state the following results on domains with more general geometry (these results can be found for instance in \cite{gilbarg_elliptic_2001} for pure Dirichlet, or \cite{hofmann_neumann_2025}, \cite{david_dimension_2024} for pure Neumann).

\begin{prop}[Moser estimate]\label{prop:MoserEstimate}
Let \(\Omega\subset\mathbb{R}^n\) be a Lipschitz domain and \(L=\mathrm{div}(A\nabla \cdot)\) an elliptic operator. Then there exists \(C>0\) such that
\begin{enumerate}[(i)]
    \item for any ball \(B=B_r(x)\) such that \(2B\subset \Omega\) and any solution \(u\in W^{1,2}(2B)\) to \(Lu=0\) in \(2B\) we have
    \begin{align}
        \mathrm{osc}_B u \lesssim C\fint_{2B}|u-(u)_B|dx;
    \end{align}\label{item:MoserInterior}
    \item for any boundary ball \(\Delta=\Delta_r(x)\) such that \(x\subset \partial\Omega\) and any solution \(u\in W^{1,2}(T(2\Delta))\) to \(Lu=0\) in \(T(2\Delta)\) with zero Neumann data on \(2\Delta\) we have
    \begin{align}
        \mathrm{osc}_{T(\Delta)} u \lesssim C\fint_{T(2\Delta)}|u-(u)_{T(2\Delta)}|dx;
    \end{align}\label{item:MoserNeumann}
    \item for any boundary ball \(\Delta=\Delta_r(x)\) such that \(x\subset \partial\Omega\) and any solution \(u\in W^{1,2}(T(2\Delta))\) to \(Lu=0\) in \(T(2\Delta)\) with zero Dirichlet data on \(2\Delta\cap \mathcal{D}\) and zero Neumann data on \(2\Delta\cap\mathcal{N}\) and \(\frac{3}{2}\Delta\cap\mathcal{D}\neq \emptyset\) we have
    \begin{align}
        \mathrm{osc}_{T(\Delta)} u \lesssim C\fint_{T(2\Delta)}|u|dx.
    \end{align}\label{item:MoserMixed}
\end{enumerate}
\end{prop}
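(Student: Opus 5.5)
The plan is to run De Giorgi--Nash--Moser iteration in each of the three settings. The only real input is a Caccioppoli inequality and a Sobolev(-Poincaré) inequality for the admissible class of test functions. For \eqref{item:MoserInterior} this is classical, see \cite{gilbarg_elliptic_2001}. We apply it to \(u-(u)_B\), which is again a solution, so \(\mathrm{osc}_B u\le 2\sup_B|u-(u)_B|\). Local boundedness gives \(\sup_B|v|\lesssim (\fint_{\frac32 B}|v|^2)^{1/2}\). The usual self-improvement trick (iteration over intermediate radii with Young's inequality) lowers the exponent from \(2\) to \(1\) on the right-hand side.

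For \eqref{item:MoserNeumann} we work in a cylinder \(\mathcal{Z}_j\) and flatten the boundary with the bi-Lipschitz map \((x,t)\mapsto(x,t-\phi_j(x))\). This turns \(L\) into another operator of the same form with ellipticity depending only on \(\lambda\) and \(l\), and keeps zero Neumann data on the flat piece. We then reflect evenly across \(\{t=0\}\), setting \(\tilde u(x,-t)=u(x,t)\) and reflecting the coefficients correspondingly (off-diagonal entries in the last row and column change sign). A direct check with test functions, which is exactly where zero conormal data is used, shows that \(\tilde u\) is a weak solution of an elliptic equation in a full ball of comparable size. Part \eqref{item:MoserInterior} then gives the estimate with \(\fint|u-c|\) for any constant \(c\). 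The mismatch between \((u)_B\) and \((u)_{T(2\Delta)}\) is absorbed because subtracting constants is allowed and averages over comparable sets are controlled by each other. Alternatively, we could run Moser directly on \(T(2\Delta)\) with cutoffs \(\eta^2 |u|^{\beta}u\) that need not vanish on \(\partial\Omega\). Since the Neumann data vanishes, no boundary term appears, and the Sobolev inequality on the Lipschitz region \(T(\Delta)\) holds with uniform constants.

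For \eqref{item:MoserMixed} we cannot subtract constants, but we can use \(v=u^\pm\). These are subsolutions: \(\eta^2 (u^+)^\beta\) is admissible because it vanishes on \(\mathcal{D}\cap 2\Delta\), and on \(\mathcal{N}\) no condition is needed. The Caccioppoli inequality for \((u^+)^{\gamma}\) then follows as in the Neumann case. For the Sobolev step we use that \(w=\eta (u^+)^{\gamma}\), extended by zero across \(\mathcal{D}\), lies in \(W^{1,2}\) of the flattened half-ball. After even reflection it lies in \(W^{1,2}_0\) of a ball, so the standard Sobolev inequality \(\|w\|_{2^*}\lesssim\|\nabla w\|_2\) holds without mean subtraction. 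Iterating gives \(\sup_{T(\Delta)}u^\pm\lesssim \fint_{T(2\Delta)}|u|\), again first with an \(L^2\) average and then improved to \(L^1\). This yields \(\mathrm{osc}\le 2\sup|u|\lesssim\fint|u|\). The hypothesis \(\frac32\Delta\cap\mathcal{D}\neq\emptyset\) is not needed for this sup bound. It matters in the Poincaré-type version (a Poincaré inequality with zero trace on a set of positive capacity), which is used when one wants decay or Hölder continuity, via the corkscrew condition on \(\mathcal{D}\) (\refass{ass:corkscrew}).

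The main obstacle is making sure all constants are uniform in the position of the interface inside the ball. The boundary of \(\mathcal{D}\) may be rough, so a mixed Sobolev inequality relative to \(\mathcal{D}\) is delicate. The route above avoids that: it uses only Caccioppoli for truncations, whose test functions vanish on \(\mathcal{D}\) automatically, together with the plain Sobolev embedding on the Lipschitz region \(T(2\Delta)\) for \(\eta w\) with \(\eta\) compactly supported in \(B_{2r}\). This Sobolev inequality holds as \(\|\eta w\|_{L^{2^*}}\lesssim \|\nabla(\eta w)\|_{L^2}+r^{-1}\|\eta w\|_{L^2}\) by extension, with constants depending only on \(n\) and \(l\). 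That is sufficient for the Moser iteration, since the lower-order term has the same scaling as the cutoff error.
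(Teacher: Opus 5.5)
Your proposal is correct and is essentially what the paper intends. The paper gives no argument of its own: it only says that the standard De Giorgi--Nash--Moser estimates for pure Dirichlet or pure Neumann data (e.g.\ \cite{gilbarg_elliptic_2001}) carry over to mixed data, as observed in \cite{taylor_mixed_2013}. Your Caccioppoli-plus-Sobolev iteration is that standard argument, using reflection or direct iteration in the Neumann case and $u^\pm$ as subsolutions, tested with functions vanishing on $\mathcal{D}$, in the mixed case; the one step you leave implicit is the routine covering of $T(\Delta)$ by balls and half-balls of radius $\sim r/C(l)$, needed because flattening a large-Lipschitz graph does not map $T(\Delta)$ and $T(2\Delta)$ onto nested half-balls, and it only affects the constants through $l$.
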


\begin{prop}[Boundary H\"{o}lder]\label{prop:BoundaryHoelder}
Let \(\Omega\subset\mathbb{R}^n\) be a Lipschitz domain and \(L=\mathrm{div}(A\nabla \cdot)\) an elliptic operator. Then there exists \(C>0\) and \(0<\beta<1\) such that for all \(0<\EPS<1/2\) and for any \(x\subset \partial\Omega\), any boundary ball \(\Delta=\Delta_r(x)\), and any solution \(u\in W^{1,2}(T(\Delta))\) to \(Lu=0\) in \(T(\Delta)\) with zero Dirichlet data on \(\Delta\cap \mathcal{D}\) and zero Neumann data on \(\Delta\cap\mathcal{N}\), we have
    \begin{align}
        \mathrm{osc}_{T(\EPS\Delta)} u \leq C\EPS^\beta \mathrm{osc}_{T(\Delta)} |u|.
    \end{align}

\end{prop}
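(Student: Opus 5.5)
The plan is to run the classical De Giorgi--Nash--Moser scheme: first show that \(u\) is uniformly continuous near the boundary with a decay estimate, and then iterate it over dyadic scales. I also use the convention \(\mathrm{osc}\,|u|\) on the right-hand side, read as the supremum of \(|u|\) on \(T(\Delta)\). This does no harm: when \(\Delta\cap\mathcal{D}\neq\emptyset\), \(u\) vanishes somewhere on the boundary, and oscillation and supremum are comparable.

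The key step is a one-step oscillation reduction at a fixed ratio. I claim there are \(\theta\in(0,1)\) and \(\eta<1\), depending only on \(n,\lambda\), the Lipschitz character and the constants \(M,R_0\) of \refass{ass:corkscrew}, such that for every admissible boundary ball \(\Delta'=\Delta_\rho(y)\subset\Delta\),
\[
\mathrm{osc}_{T(\theta\Delta')}u\le \eta\, \mathrm{osc}_{T(\Delta')}u .
\]
I would prove this in two cases, according to how \(\Delta'\) meets the two parts of the boundary.

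\emph{Case 1:} \(\frac12\Delta'\subset\mathcal{N}\), so \(u\) has zero Neumann data on \(\frac12\Delta'\). After flattening the Lipschitz boundary locally, I reflect evenly across it; the reflected function solves an elliptic equation with bounded measurable coefficients of comparable ellipticity. The interior De Giorgi--Nash oscillation decay, equivalently \refprop{prop:MoserEstimate}\eqref{item:MoserNeumann} combined with the weak Harnack inequality, then gives the reduction.

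\emph{Case 2:} \(\frac12\Delta'\cap\mathcal{D}\neq\emptyset\). By \refass{ass:corkscrew} there is a surface ball \(\Delta_{\rho/(CM)}(z)\subset\mathcal{D}\cap\frac12\Delta'\) on which \(u=0\). Let \(m^\pm\) denote the supremum and infimum of \(u\) on \(T(\Delta')\), so that \(m^-\le 0\le m^+\). I apply De Giorgi's lemma to the truncations \((u-k)^+\) with \(k=\frac12 m^+\), and symmetrically to \((k'-u)^+\) with \(k'=\frac12 m^-\). These truncations are admissible subsolutions: they vanish on the Dirichlet part and satisfy the natural (zero) conormal condition on the Neumann part, so Caccioppoli inequalities hold up to the boundary. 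Each truncation vanishes on a portion of the boundary of size comparable to \(\rho^{n-1}\), and a boundary Poincaré/capacity inequality then gives the measure-density condition in \(T(\frac12\Delta')\) needed for the De Giorgi iteration. The iteration yields \(\sup_{T(\theta\Delta')}u\le(1-\gamma)m^+\), and similarly for the infimum. This is the local boundedness of \refprop{prop:MoserEstimate}\eqref{item:MoserMixed} upgraded to a decay statement.

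The main obstacle is making the Case 2 estimate uniform. The Dirichlet portion is only guaranteed by the corkscrew condition, so the Poincaré inequality for functions vanishing on a set \(E\subset\partial\Omega\) must be derived with constants depending only on \(|E|/\rho^{n-1}\gtrsim M^{-(n-1)}\). I would obtain it by contradiction and compactness, or directly from trace inequalities on Lipschitz domains.

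Iteration: given \(0<\EPS<1/2\), choose \(k\) with \(\theta^{k+1}<\EPS\le\theta^k\). Consider the nested balls \(\Delta_{\theta^j r}(x)\). If all of them fall under Case 2, or if Case 1 applies, the reduction holds at each step. A ball of Case 1 type keeps the Neumann reduction at all smaller concentric scales, so mixing the two cases costs nothing. We obtain \(\mathrm{osc}_{T(\EPS\Delta)}u\le\eta^k\,\mathrm{osc}_{T(\Delta)}u\le C\EPS^\beta\,\mathrm{osc}_{T(\Delta)}u\) with \(\beta=\log\eta/\log\theta\). For points of \(T(\EPS\Delta)\) not covered by these concentric regions, I combine the boundary estimate with interior Hölder continuity, applying \refprop{prop:MoserEstimate}\eqref{item:MoserInterior} on Whitney balls, by the usual covering argument.
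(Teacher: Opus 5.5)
Your proposal is correct and follows the route the paper intends. The paper gives no separate argument: it only notes that the standard De Giorgi--Nash--Moser proofs for pure Dirichlet or pure Neumann data carry over to mixed data, as observed in Section 3 of Taylor--Ott--Brown. That is exactly your scheme: even reflection in the pure Neumann case, and a Poincar\'e inequality for functions vanishing on a corkscrew ball of \(\mathcal{D}\), combined with Caccioppoli, to supply the measure-density input for De Giorgi in the mixed case. Your explicit use of the corkscrew condition is in fact necessary, although the printed statement omits it, since uniform decay fails if \(\mathcal{D}\) is allowed to be arbitrarily thin near the interface. Your final covering step is superfluous, because \(T(\EPS\Delta)\subset T(\theta^k\Delta)\) already.
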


\begin{lemma}[Reverse H\"{o}lder inequality]\label{lemma:RevHolderBoundary}
    Suppose \refass{ass:corkscrew}. Let \(u\in W^{1,2}(T(\Delta_{2r}))\) be a solution to \(Lu=0\) with zero Dirichlet data on \(\mathcal{D}\cap\Delta_{2r}\) and Neumann data \(g_\mathcal{N}\) on \(\Delta_{2r}\cap\mathcal{N}\), then there exists \(p_0>2\) and \(C>0\) such that
    \begin{align}
    \Big(\fint_{T(\Delta_r)}|\nabla u|^{p_0}dx\Big)^{\frac{1}{p_0}}\lesssim \fint_{T(\Delta_{2r})}|\nabla u|dx + C\Big(\fint_{\Delta_{2r}}1_\mathcal{N}|g_{\mathcal{N}}|^{p_0\frac{n-1}{n}}dx\Big)^{\frac{n}{p_0(n-1)}}.\label{eq:lemma:RevHolderBoundary}
    \end{align}
\end{lemma}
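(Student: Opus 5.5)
The plan is the classical Gehring-type argument: prove a boundary Caccioppoli inequality and a Sobolev–Poincaré inequality adapted to the mixed data, combine them into a weak reverse Hölder inequality with exponent gain from \(1\) (or \(\frac{2n}{n+2}\)) to \(2\), and then apply Gehring's lemma with a boundary term. Interior and near-boundary balls are handled together by extending \(u\) and working with a covering of \(T(\Delta_{2r})\) by balls \(B_\rho(y)\), \(y\in\overline{\Omega}\). Throughout, the domain is locally flattened by the Lipschitz graph \(\phi\) of \refdef{def:Lipschitzdomain}, which only changes the ellipticity constants.

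\emph{Step 1 (Caccioppoli).} For a ball \(B=B_\rho(y)\) with \(B\cap\Omega\subset T(\Delta_{2r})\), we test the weak formulation with \(\eta^2(u-c)\), where \(\eta\) is a cutoff adapted to \(B\). The constant \(c\) is chosen as follows.
\begin{itemize}
\item If \(2B\cap\mathcal{D}=\emptyset\), we take \(c=(u)_{2B\cap\Omega}\). This is admissible since there is no Dirichlet part.
\item If \(2B\cap\mathcal{D}\neq\emptyset\), we take \(c=0\). This is admissible since \(u=0\) on \(\mathcal{D}\).
\end{itemize}
The Neumann term \(\int_{\mathcal{N}}g_\mathcal{N}\eta^2(u-c)\,d\sigma\) is controlled by Hölder and the trace embedding \(W^{1,1}\to L^{\frac{n-1}{n-1}}\)-type, more precisely the trace of \(W^{1,s}\) into \(L^{s(n-1)/(n-s)}(\partial\Omega)\). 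Absorbing with Young's inequality, this gives
\[
\int_{B\cap\Omega}|\nabla u|^2\lesssim \rho^{-2}\int_{2B\cap\Omega}|u-c|^2+\rho^{n}\Big(\fint_{2B\cap\partial\Omega}1_\mathcal{N}|g_\mathcal{N}|^{\frac{2(n-1)}{n}}\Big)^{\frac{n}{n-1}}.
\]
The exponent \(\frac{2(n-1)}{n}\) is exactly the one dual to the trace of \(W^{1,2}\) into \(L^{2(n-1)/(n-2)}\).

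\emph{Step 2 (Sobolev–Poincaré).} We bound \(\rho^{-2}\int|u-c|^2\) by \(\rho^{n}\big(\fint|\nabla u|^{2_*}\big)^{2/2_*}\) with \(2_*=\frac{2n}{n+2}\).
\begin{itemize}
\item In the first case this is the usual Poincaré inequality on the Lipschitz set \(2B\cap\Omega\).
\item In the second case we need a Poincaré inequality for functions vanishing on \(\mathcal{D}\cap 4B\). This is where \refass{ass:corkscrew} enters. Since \(2B\) meets \(\mathcal{D}\), the corkscrew condition gives a surface ball \(\Delta_{\rho/M}(z)\subset\mathcal{D}\cap 4B\) on which \(u=0\). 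The zero set therefore has \((n-1)\)-capacity comparable to that of \(4B\), and a Maz'ya-type Poincaré inequality applies, with constant depending on \(M\).
\end{itemize}
We will have to enlarge \(2B\) to \(4B\) in this case, which is harmless.

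\emph{Step 3 (Gehring).} Combining Steps 1 and 2 yields a weak reverse Hölder inequality
\[
\Big(\fint_{B}|\nabla u|^2\Big)^{1/2}\lesssim \Big(\fint_{4B}|\nabla u|^{2_*}\Big)^{1/2_*}+\Big(\fint_{4B\cap\partial\Omega}1_\mathcal{N}|g_\mathcal{N}|^{\frac{2(n-1)}{n}}\Big)^{\frac{n}{2(n-1)}},
\]
where \(\nabla u\) is extended by zero outside \(\Omega\) and \(B\) ranges over small balls centered in \(\overline{T(\Delta_{2r})}\). The boundary term can be rewritten as a solid average of the function \(G:=\rho^{-1}\)-scaled surface data. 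In practice we treat it via the boundary version of Gehring's lemma, as in Giaquinta–Modica, or by realizing the surface data as a solid density on a thin collar.

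Gehring's lemma then gives a higher exponent \(p_0>2\). With it, the \(L^{p_0}\) average on \(T(\Delta_r)\) is bounded by the \(L^2\) average on a larger region plus the boundary term with exponent \(p_0\frac{n-1}{n}\). Finally, the standard self-improvement (iterating on the right side and using Young's inequality with a covering argument) lowers the \(L^2\) average to the \(L^1\) average on \(T(\Delta_{2r})\).

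The main obstacle is the Poincaré inequality near the interface in Step 2. Its constant must be uniform across the dichotomy of whether the ball meets \(\mathcal{D}\). This requires the corkscrew condition to produce a large Dirichlet subset at every scale, including for balls that meet \(\mathcal{D}\) only at their boundary. I would handle such marginal balls by always working with \(4B\) and the corkscrew ball of radius \(\rho/M\). A second technical point is running Gehring with a surface-measure source term; this I would handle by the collar trick.
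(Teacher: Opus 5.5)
Your architecture (boundary Caccioppoli, a Poincar\'e inequality using the surface ball of radius \(\approx\rho/M\) that \refass{ass:corkscrew} places inside \(\mathcal{D}\cap 4B\), Gehring's lemma, then lowering \(L^2\) to \(L^1\)) is the same real-variable argument the paper invokes by citing Lemma 3.19 of \cite{ott_mixed_2013}. Steps 1--2 are sound, with one exception: for \(n=2\), traces of \(W^{1,2}\) are not in \(L^\infty\), so the exponent \(\frac{2(n-1)}{n}=1\) in your Caccioppoli inequality must be raised slightly.

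The gap is in Step 3, exactly where the exponent \(p_0\frac{n-1}{n}\) on \(g_\mathcal{N}\) has to come from. Gehring's lemma needs a data term \(\fint_{4B}F^2\) with one function \(F\) fixed for all balls. Your data term \(\big(\fint_{4B\cap\partial\Omega}1_\mathcal{N}|g_\mathcal{N}|^{\frac{2(n-1)}{n}}\big)^{\frac{n}{n-1}}\) is instead a superlinear power of a surface average. A ``\(\rho^{-1}\)-scaled'' density depends on the ball, so it is not admissible. The natural admissible choices do not help either: spreading \(g_\mathcal{N}\) over a collar of width \(\approx r\), or taking \(F(z)=\big(M_\sigma(|g_\mathcal{N}|^{\frac{2(n-1)}{n}})(\tilde z)\big)^{\frac{n}{2(n-1)}}\) constant in the normal direction, both give \(\big(\fint_{T(\Delta_{2r})}F^{p_0}\big)^{1/p_0}\approx\big(\fint_{\Delta_{2r}}|g_\mathcal{N}|^{p_0}\big)^{1/p_0}\). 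So, as sketched, you prove the lemma with \(|g_\mathcal{N}|^{p_0}\) in place of \(|g_\mathcal{N}|^{p_0\frac{n-1}{n}}\). That weaker form would suffice for Lemma~\ref{lemma:LocalNontangnetialEstimateMixed}, where only \(\Vert g_\mathcal{N}\Vert_{L^\infty}\) enters, but it is not the statement.

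The missing idea is to keep the Sobolev--trace gain when converting surface data into solid data. One way to do this:
\begin{itemize}
\item For \(s>2\) we have \(s'<n\), so traces of \(W^{1,s'}(T(\Delta_{2r}))\) lie in \(L^{\frac{s'(n-1)}{n-s'}}(\Delta_{2r})\), whose dual is \(L^{\frac{s(n-1)}{n}}\).
\item By Hahn--Banach there are \(F,f\in L^s(T(\Delta_{2r}))\) with \(\int_{\Delta_{2r}}1_\mathcal{N}g_\mathcal{N}\varphi\,d\sigma=\int F\cdot\nabla\varphi\,dx+r^{-1}\int f\varphi\,dx\) for all \(\varphi\in W^{1,s'}(T(\Delta_{2r}))\), and \(\Vert F\Vert_{L^s}+\Vert f\Vert_{L^s}\lesssim\Vert g_\mathcal{N}\Vert_{L^{s(n-1)/n}(\Delta_{2r})}\) uniformly in \(r\).
\item Testing with your \(\eta^2(u-c)\) gives the Caccioppoli inequality with data term \(\fint_{2B}(|F|^2+|f|^2)\), in every dimension \(n\geq 2\).
\item The classical Giaquinta--Modica lemma then applies for \(s\) below the homogeneous Gehring exponent.
\item Scaling turns \(r^{-n/s}\Vert g_\mathcal{N}\Vert_{L^{s(n-1)/n}(\Delta_{2r})}\) into \(\big(\fint_{\Delta_{2r}}|g_\mathcal{N}|^{\frac{s(n-1)}{n}}\big)^{\frac{n}{s(n-1)}}\).
\end{itemize}

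Alternatively, your ``solid density'' idea can be rescued by truncating the maximal function at height \(\delta(z)\). Set \(h=1_{\mathcal{N}\cap\Delta_{2r}}|g_\mathcal{N}|^{\frac{2(n-1)}{n}}\) and \(F(z):=\big(\sup_{\rho\geq\delta(z)}\fint_{\Delta_\rho(\tilde z)}h\,d\sigma\big)^{\frac{n}{2(n-1)}}\). This \(F\) dominates your data term on every ball, after enlarging the ball by a fixed factor. Split the normal integral at the height \(t\) where \(M_\sigma h(x)\) and \(t^{-2(n-1)/p}\Vert h\Vert_{L^{p/2}}\) cross. This yields \(\int_{T(\Delta_{2r})}F^p\,dz\lesssim\big(\int_{\Delta_{2r}}|g_\mathcal{N}|^{\frac{p(n-1)}{n}}d\sigma\big)^{\frac{n}{n-1}}\), which has exactly the scaling the lemma requires.
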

The proof of the boundary reverse Hölder's inequality is an entirely real variable argument which requires a boundary Cacciopolli inequality as the only tool that stems from the PDE. Hence, its proof is completely analogous to the proof of Lemma 3.19 in \cite{ott_mixed_2013}.
\begin{rem}
    The numbers \(p_0\) and \(C\) in \reflemma{lemma:RevHolderBoundary} depend on the constants in the interior corkscrew condition of \(\mathcal{D}\) (\refass{ass:corkscrew}), the ambient dimension \(n\), and the ellipticity constant \(\lambda\). In general, for variable coefficient operators \(L\) like in \eqref{eq:DefEllipticOperator}, \(p_0\) is bigger than but close to 2. However under additional flatness conditions, \cite{choi_optimal_2021} shows the reverse Hölder's inequality for all \(p_0\in (3/4,4)\). These conditions are that the boundary of the domain \(\partial\Omega\) and the interface \(\Lambda\) are Reifenberg flat with constants depending on \(p_0\) and that the coefficients of the operator have sufficiently small BMO semi-norm depending on \(p_0\).
\end{rem}

\subsection{Existence of \texorpdfstring{\(W^{1,2}\)}{} solutions for the mixed boundary value problem}

In the following let \(W^{1/2,2}(\mathcal{D})\) be the restriction of the trace space of \(W^{1,2}(\Omega)\) on \(\mathcal{D}\subset\partial\Omega\), and \(W^{-1/2,2}(\mathcal{N})\) be the restriction of the trace space of the normal derivative of functions in \(W^{1,2}(\Omega)\) on \(\mathcal{N}\subset\partial\Omega\).

\begin{thm}\label{thm:ExistenceofW12sol}
    Let \(g_\mathcal{D}\in W^{1/2,2}(\mathcal{D}), g_\mathcal{N}\in W^{-1/2,2}(\mathcal{N})\). Then there exists a unique weak solution \(u\in W^{1,2}(\Omega)\) to \eqref{eq:MixedPDE} such that
    \[\Vert u\Vert_{W^{1,2}(\Omega)}\lesssim \Vert g_\mathcal{D}\Vert_{W^{1/2,2}(\mathcal{D})} + \Vert g_\mathcal{N}\Vert_{W^{-1/2,2}(\mathcal{N})}.\]
\end{thm}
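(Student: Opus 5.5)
The plan is the standard Lax--Milgram argument on the closed subspace of functions vanishing on \(\mathcal{D}\). Set
\[V:=\{v\in W^{1,2}(\Omega): \mathrm{Tr}\, v=0 \text{ on } \mathcal{D}\}.\]
Since the trace operator \(W^{1,2}(\Omega)\to W^{1/2,2}(\partial\Omega)\) is bounded on a Lipschitz domain, \(V\) is a closed subspace of \(W^{1,2}(\Omega)\).

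\textbf{Lifting the Dirichlet data.} By definition, \(W^{1/2,2}(\mathcal{D})\) is the space of restrictions to \(\mathcal{D}\) of traces of \(W^{1,2}(\Omega)\) functions, with the quotient norm. Hence we can pick \(G\in W^{1,2}(\Omega)\) with \(\mathrm{Tr}\, G=g_\mathcal{D}\) on \(\mathcal{D}\) and
\[\Vert G\Vert_{W^{1,2}(\Omega)}\lesssim \Vert g_\mathcal{D}\Vert_{W^{1/2,2}(\mathcal{D})}.\]
This uses an almost-minimizer; the lower bound is attained up to a factor 2. We then look for \(u=G+w\) with \(w\in V\) satisfying
\[\int_\Omega A\nabla w\cdot\nabla\phi\,dx=\langle g_\mathcal{N},\phi\rangle_{\mathcal{N}}-\int_\Omega A\nabla G\cdot\nabla\phi\,dx\quad\text{for all }\phi\in V.\]
This is the weak formulation of \eqref{eq:MixedPDE}: test functions vanish on \(\mathcal{D}\), and the conormal derivative is prescribed on \(\mathcal{N}\).

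\textbf{The right-hand side.} The functional on the right is bounded on \(V\). The second term is bounded by \(\lambda^{-1}\Vert G\Vert_{W^{1,2}}\Vert\phi\Vert_{W^{1,2}}\). For the first term, \(W^{-1/2,2}(\mathcal{N})\) is the dual-type space acting on traces of \(V\)-functions restricted to \(\mathcal{N}\). So \(|\langle g_\mathcal{N},\phi\rangle|\lesssim \Vert g_\mathcal{N}\Vert_{W^{-1/2,2}(\mathcal{N})}\Vert\phi\Vert_{W^{1,2}(\Omega)}\), by the trace theorem.

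\textbf{Coercivity (the main obstacle).} The bilinear form \(a(w,\phi)=\int A\nabla w\cdot\nabla\phi\) is bounded by the boundedness of \(A\). For coercivity, ellipticity gives \(a(w,w)\ge\lambda\Vert\nabla w\Vert_{L^2}^2\), so we need a Poincaré inequality \(\Vert w\Vert_{L^2(\Omega)}\lesssim\Vert\nabla w\Vert_{L^2(\Omega)}\) for \(w\in V\). I would prove it by the usual compactness argument. If it failed, there would be \(w_k\in V\) with \(\Vert w_k\Vert_{L^2}=1\) and \(\Vert\nabla w_k\Vert_{L^2}\to0\). By Rellich on the bounded Lipschitz domain \(\Omega\), a subsequence converges in \(L^2\). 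Since the gradients tend to zero, the convergence is strong in \(W^{1,2}\), and the limit \(w\) is a constant \(c\) (\(\Omega\) is connected) with \(\Vert w\Vert_{L^2}=1\). Continuity of the trace and closedness of \(V\) then give \(c=0\) on \(\mathcal{D}\), which is a contradiction provided \(\sigma(\mathcal{D})>0\). Positivity of \(\sigma(\mathcal{D})\) holds because \(\mathcal{D}\) is a nonempty open subset of \(\partial\Omega\); it also follows from the corkscrew condition \refass{ass:corkscrew}. If \(\mathcal{D}=\emptyset\), this is the pure Neumann problem, and one would work modulo constants under the compatibility condition; I would remark this and exclude it.

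\textbf{Conclusion.} Lax--Milgram, which does not require \(A\) to be symmetric, gives a unique \(w\in V\) with
\[\Vert w\Vert_{W^{1,2}}\lesssim \Vert G\Vert_{W^{1,2}}+\Vert g_\mathcal{N}\Vert_{W^{-1/2,2}(\mathcal{N})}.\]
Then \(u=G+w\) satisfies the claimed estimate. Testing with \(\phi\in W^{1,2}_0(\Omega)\subset V\) shows \(Lu=0\) weakly, and \(u\) has trace \(g_\mathcal{D}\) on \(\mathcal{D}\). For uniqueness, the difference of two solutions lies in \(V\) and solves the homogeneous problem. Testing with the difference itself and using coercivity shows it vanishes. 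In particular the solution is independent of the choice of extension \(G\).
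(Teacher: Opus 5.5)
Your proposal is correct and matches the paper, which omits the proof as a standard Lax--Milgram argument. You carry out exactly that argument: lift $g_\mathcal{D}$, solve on the closed subspace of $W^{1,2}(\Omega)$ functions vanishing on $\mathcal{D}$, and get coercivity from a Poincar\'e inequality that uses $\sigma(\mathcal{D})>0$, which also makes explicit the implicit exclusion of the pure Neumann case $\mathcal{D}=\emptyset$.
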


This proof follows from Lax-Milgram and is fairly standard, hence we omit it.

\section{Local estimates for the nontangential maximal function}\label{section:LocalEstimates}

In this section, we establish important local estimates for the nontangnential maximal function that we will use in the proof of \refthm{MainThm} \eqref{item:MainThmA} and \eqref{item:MainThmC}.

First, since we assume solvability of \((R)_p\) and \((N)_p\) for some \(p>1\), without loss of generality, we can assume from now on, that \(u\in W_{\text{loc}}^{1,2}(\Omega)\) is a weak solution to 
\begin{align}\begin{cases}
    Lu=0& \textrm{ in }\Omega,
    \\
    A\nabla u\cdot \nu=g_\mathcal{N} &\textrm{ on }\mathcal{N},
    \\
    u=0 &\textrm{ on }\mathcal{D}.
\end{cases}\label{PDEwithZeroDirichletdata}
\end{align}

We fix this \(p\) for the rest of this section. We will consider a small boundary ball \(\Delta_r\) and have to distinguish three cases: The pure Neumann, the pure Dirichlet, and the mixed case.

\begin{lemma}\label{lemma:LocalNontangnetialEstimatePureNeumann}
    Let \(\Delta=\Delta_r\) be a boundary ball with \(\Delta_{2r}\subset\mathcal{N}.\) Then we have
    \[\Vert \tilde{N}_{r/2}(\nabla u)\Vert_{L^p(\Delta_r)}\lesssim \Vert g_\mathcal{N}\Vert_{L^p(\Delta_{2r})} + r^{\frac{n-1}{p}-n}\int_{T(\Delta_{2r})}|\nabla u| dx.\]
\end{lemma}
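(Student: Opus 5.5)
Since $\Delta_{2r}\subset\mathcal{N}$, the solution only sees Neumann data near $\Delta_r$, and we reduce to the assumed global solvability of $(N)_p$ via a localization. The approach is a standard cut-off and splitting argument.

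\textbf{Step 1: localization.} Fix a smooth cut-off $\eta$ with $\eta\equiv1$ on $B_{3r/2}$, $\supp\eta\subset B_{2r}$, and $|\nabla\eta|\lesssim 1/r$. Let $c$ be the average of $u$ over $T(\Delta_{2r})$; subtracting it does not change $\nabla u$. Consider $v:=\eta(u-c)$. It satisfies $Lv=\mathrm{div}(A\nabla\eta\,(u-c))+A\nabla u\cdot\nabla\eta$ in $\Omega$. Its conormal derivative on $\partial\Omega$ is $\eta g_\mathcal{N}+(u-c)A\nabla\eta\cdot\nu$, supported in $\Delta_{2r}\subset\mathcal{N}$.

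\textbf{Step 2: decomposition.} Write $v=w+z$. Here $w$ solves the pure Neumann problem with homogeneous interior equation and boundary data $\eta g_\mathcal{N}$ plus a correction term. The correction is chosen as a constant multiple of $\sigma(\Delta_{2r})^{-1}1_{\Delta_{2r}}$, so that the compatibility condition $\int g\,d\sigma=0$ holds. The function $z$ carries the inhomogeneity. Applying $(N)_p$ to $w$ gives
\[\Vert\tilde N(\nabla w)\Vert_{L^p}\lesssim\Vert g_\mathcal{N}\Vert_{L^p(\Delta_{2r})}+r^{\frac{n-1}{p}-(n-1)}\Big|\int_{\Delta_{2r}}g_\mathcal{N}\Big|.\]
The compensating constant is controlled through the weak formulation tested with $\eta$: its size equals $\int_\Omega A\nabla u\cdot\nabla\eta$, which is bounded by $r^{-1}\int_{T(\Delta_{2r})}|\nabla u|$, and this produces the second term in the statement.

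\textbf{Step 3: the remainder.} The remainder $z=v-w$ solves $Lz=F$ with zero Neumann data and compactly supported right-hand side $F$. Its size is controlled by $r^{-1}|\nabla u|$ and $r^{-2}|u-c|$ on $T(\Delta_{2r})\setminus B_{3r/2}$, after rearranging (by Poincaré, $r^{-1}|u-c|$ is controlled by $|\nabla u|$ in average). A simpler route avoids $z$ altogether. The truncated $\tilde N_{r/2}(\nabla u)$ on $\Delta_r$ only sees $T(\Delta_{3r/2})$, where $\eta\equiv1$. Moreover, $u-c-w$ solves $L(\cdot)=0$ with zero Neumann data on $\Delta_{3r/2}$ after one refines the construction. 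In that case Caccioppoli and the Moser estimate \refprop{prop:MoserEstimate}\eqref{item:MoserNeumann}, applied on Whitney/boundary balls, bound $\sup_{T(\Delta_{5r/4})}|\nabla$-averages$|$ by $\fint_{T(\Delta_{2r})}|\nabla(u-w)|$. This yields
\[\Vert\tilde N_{r/2}(\nabla(u-w))\Vert_{L^p(\Delta_r)}\lesssim r^{\frac{n-1}{p}}\fint_{T(\Delta_{2r})}|\nabla u|+|\nabla w|.\]
The $w$-part of the right-hand side is absorbed via $\int_{T(\Delta_{2r})}|\nabla w|\lesssim r\Vert\tilde N(\nabla w)\Vert_{L^1(\Delta_{3r})}\lesssim r\cdot r^{(n-1)(1-1/p)}\Vert\tilde N(\nabla w)\Vert_{L^p}$.

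\textbf{Main obstacle.} I therefore plan to define $w$ directly as the $(N)_p$ solution with data $g_\mathcal{N}1_{\Delta_{2r}}-m1_{\Delta_{2r}}$, where $m$ is the mean of $g_\mathcal{N}$ on $\Delta_{2r}$. Then $u-w$ has Neumann data equal to the constant $m$ on $\Delta_{2r}$, not zero. The main obstacle is handling this constant Neumann data in the interior/boundary gradient estimate. I would remove it by subtracting an explicit solution with constant conormal data; for example, $m$ times the $(N)$ solution of data $1_{\Delta_{2r}}-|\Delta_{2r}|/|\partial\Omega|$. Its gradient has nontangential $L^p$ norm $\lesssim |m| r^{(n-1)/p}\lesssim\Vert g_\mathcal{N}\Vert_{L^p(\Delta_{2r})}$ by Hölder. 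After this subtraction, the remainder has zero Neumann data on $\Delta_{2r}$ and is handled by the Moser/Caccioppoli step in Step 3. Combining the three pieces gives the claimed estimate.
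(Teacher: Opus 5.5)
Your overall structure matches the paper's: split off a pure Neumann solution $w$ with data localized to $\Delta_{2r}$, and bound $\tilde N(\nabla w)$ by $(N)_p$. Then prove a local estimate for $u-w$, which has vanishing conormal derivative on $\Delta_{2r}$, and absorb $\int_{T(\Delta_{2r})}|\nabla w|$ via Hölder and $(N)_p$. The gap is in how you justify the local estimate
\[
\Vert \tilde N_{r/2}(\nabla(u-w))\Vert_{L^p(\Delta_r)}\lesssim r^{\frac{n-1}{p}-n}\int_{T(\Delta_{2r})}|\nabla(u-w)|\,dx .
\]
You claim that Caccioppoli and the Moser estimate bound $\big(\fint_{B_{\delta(y)/2}(y)}|\nabla(u-w)|^2\big)^{1/2}$ uniformly for $y\in T(\Delta_{5r/4})$. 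That would be an $L^\infty$ bound on $\tilde N_{r/2}(\nabla(u-w))$, and it is false.

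Caccioppoli on a ball of radius $\rho=\delta(y)$ gives $\rho^{-1}\,\mathrm{osc}_{B_{3\rho/4}(y)}(u-w)$. Proposition~\ref{prop:MoserEstimate} and Proposition~\ref{prop:BoundaryHoelder} only give $\mathrm{osc}\lesssim(\rho/r)^{\beta}\,r\fint_{T(\Delta_{2r})}|\nabla(u-w)|$ with some $\beta<1$. The product is $(r/\rho)^{1-\beta}\fint_{T(\Delta_{2r})}|\nabla(u-w)|$, which blows up as $\rho\to0$. A boundary gradient bound of this kind fails for bounded measurable coefficients, and even for the Laplacian on Lipschitz domains. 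In a planar sector of opening $\theta\in(\pi,2\pi)$, the harmonic function $\varrho^{\pi/\theta}\cos(\pi\varphi/\theta)$ has zero Neumann data near the vertex, but its gradient has size $\varrho^{\pi/\theta-1}$, which is unbounded.

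The local estimate is only an $L^p$ statement, and it genuinely needs $(N)_p$. The paper gets it from Lemma~4.1 of \cite{feneuil_lp_2024}, using that $(N)_p$ implies the weak Poisson--Neumann problem $(wPN)_{p'}$. The mechanism is the one written out for the Dirichlet case in Lemma~\ref{lemma:LocalNontangnetialEstimatePureDirichlet}:
\begin{itemize}
\item dualize $\tilde N$ against $h$ with $\Vert\tilde{\mathcal C}(\delta h)\Vert_{L^{p'}}=1$ via Proposition~\ref{prop:DualityNontangnetialCarleson};
\item insert a cut-off, solve an adjoint Poisson problem with zero Neumann data, and integrate by parts;
\item use Moser and Caccioppoli only on the annulus where the cut-off's gradient lives, at scale $r$;
\item control $\tilde N$ of the adjoint solution in $L^{p'}$ by the Poisson--Neumann estimate.
\end{itemize}
Your discarded $z$-route lacks the same ingredient. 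Bounding $\tilde N(\nabla z)$ for $Lz=F$ with zero Neumann data is exactly a Poisson--Neumann estimate, which Caccioppoli and Moser do not supply.

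A smaller slip concerns your compatibility fix. The data of $w$ is $(g_\mathcal{N}-m)1_{\Delta_{2r}}$ and that of $m w_1$ is $m(1_{\Delta_{2r}}-|\Delta_{2r}|/|\partial\Omega|)$. So $u-w-mw_1$ has conormal data $m|\Delta_{2r}|/|\partial\Omega|\neq0$ on $\Delta_{2r}$, not zero. Place the compensating mass off $\Delta_{2r}$ instead, e.g.\ data $g_\mathcal{N}1_{\Delta_{2r}}-\big(\int_{\Delta_{2r}}g_\mathcal{N}\big)\sigma(E)^{-1}1_E$ with $E=\partial\Omega\setminus\Delta_{2r}$. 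By Hölder its $L^p$ norm is still $\lesssim\Vert g_\mathcal{N}\Vert_{L^p(\Delta_{2r})}$ for small $r$. The paper itself takes data $g_\mathcal{N}1_{\Delta_{2r}}$ without addressing compatibility, so you were right to raise it. The fix just has to leave the data on $\Delta_{2r}$ untouched.
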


\begin{proof}
    Let \(w\in W_{\text{loc}}^{1,2}(\Omega)\) be the solution to \(Lw=0\) with Neumann boundary data \(g_\mathcal{N}1_{\Delta_{2r}}\) on \(\partial\Omega\). Then
     \[\Vert \tilde{N}_{r/2}(\nabla u)\Vert_{L^p(\Delta_r)}\lesssim \Vert \tilde{N}_{r/2}(\nabla (u-w))\Vert_{L^p(\Delta_r)} + \Vert \tilde{N}_{r/2}(\nabla w)\Vert_{L^p(\Delta_r)}.\]
     By the solvability of \((N)_p\) (cf. \refprop{prop:SolvabilityofDRNforDKP}), we have for the second term \[\Vert \tilde{N}_{r/2}(\nabla w)\Vert_{L^p(\Delta_r)}\lesssim \Vert g_\mathcal{N}\Vert_{L^p(\Delta_{2r})}.\]
     Since \(u-w\) has zero Neumann data in \(\Delta_{2r}\), we can apply Lemma 4.1 in \cite{feneuil_lp_2024}. Note here, that solvability of \((N)_p\) implies solvability of the weak Poisson Neumann problem \((wPN)_{p'}\), which is introduced in \cite{feneuil_lp_2024}. This implication is also proved in \cite{feneuil_lp_2024} and allows us to apply their Lemma 4.1. We obtain
     \begin{align*}\Vert \tilde{N}_{r/2}(\nabla(u- w))\Vert_{L^p(\Delta_r)}&\lesssim r^{\frac{n-1}{p}-n}\int_{T(\Delta_{2r})}|\nabla (u-w)|dx
     \\
     &\lesssim r^{\frac{n-1}{p}-n}\int_{T(\Delta_{2r})}|\nabla u|dx +r^{\frac{n-1}{p}-n}\int_{T(\Delta_{2r})}|\nabla w|dx.\end{align*}
     For the second term, we can use Hölder's inequality and the solvability of \((N)_p\) (cf. \refprop{prop:SolvabilityofDRNforDKP}), and we get
     \begin{align*}r^{\frac{n-1}{p}-n}\int_{T(\Delta_{2r})}|\nabla w|dx&\leq \Big(r^{n-1}\fint_{T(\Delta_{2r})}|\nabla w|^pdx\Big)^{1/p}\lesssim \Vert \tilde{N}_{2r}(\nabla w)\Vert_{L^p(\Delta_{2r})}
     \\
     &\lesssim \Vert g_\mathcal{N}\Vert_{L^p(\Delta_{2r})}.\end{align*}
\end{proof}

\begin{lemma}\label{lemma:LocalNontangnetialEstimatePureDirichlet}
    Let \(\Delta=\Delta_r\) be a boundary ball with \(\Delta_{2r}\subset\mathcal{D}.\) Then we have
    \[\Vert \tilde{N}_{r/2}(\nabla u)\Vert_{L^p(\Delta_r)}\lesssim r^{\frac{n-1}{p}-n}\int_{T(\Delta_{2r})}|\nabla u| dx.\]
\end{lemma}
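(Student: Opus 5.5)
In the pure Dirichlet case \(\Delta_{2r}\subset\mathcal{D}\), the solution \(u\) of \eqref{PDEwithZeroDirichletdata} has zero Dirichlet data on the whole of \(\Delta_{2r}\). So no auxiliary function \(w\) is needed, unlike in \reflemma{lemma:LocalNontangnetialEstimatePureNeumann}. The plan is to prove the localized regularity estimate
\[\Vert \tilde{N}_{r/2}(\nabla u)\Vert_{L^p(\Delta_r)}\lesssim r^{\frac{n-1}{p}-n}\int_{T(\Delta_{2r})}|\nabla u|\,dx,\]
mirroring the use of Lemma 4.1 of \cite{feneuil_lp_2024} in the Neumann case. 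In the Dirichlet case, the analogous local estimate for solutions with vanishing Dirichlet data on a boundary ball follows from solvability of \((R)_p\): by the duality arguments of \cite{mourgoglou_solvability_2025} and \cite{feneuil_lp_2024}, \((R)_p\) implies solvability of a weak Poisson Dirichlet problem \((wPD)_{p'}\), and that in turn gives the localization. I would invoke this directly. If a self-contained argument is wanted instead, I would argue as follows.

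\textbf{Step 1 (cut-off).} Pick a smooth cut-off \(\eta\) with \(\eta\equiv1\) on \(B_{3r/2}\), supported in \(B_{7r/4}\), and \(|\nabla\eta|\lesssim 1/r\). Set \(v=\eta u\). Since \(u=0\) on \(\Delta_{2r}\), the function \(v\) vanishes on all of \(\partial\Omega\), and it solves \(Lv=\mathrm{div}(A u\nabla\eta)+A\nabla u\cdot\nabla\eta=:F\), an inhomogeneity supported in \(T(\Delta_{2r})\setminus B_{3r/2}\). Note that \(|\nabla u|=|\nabla v|\) on \(T(\Delta_{3r/2})\). For \(x\in\Delta_r\), the cone points with \(\delta(y)\le r/2\) (and aperture adjusted) have their Whitney balls inside \(B_{3r/2}\). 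Hence \(\tilde N_{r/2}(\nabla u)=\tilde N_{r/2}(\nabla v)\) on \(\Delta_r\).

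\textbf{Step 2 (duality).} I would estimate \(\Vert\tilde N(\nabla v)\Vert_{L^p}\) through \refprop{prop:DualityNontangnetialCarleson}. Test against \(H\) with \(\Vert\tilde{\mathcal C}(\delta H)\Vert_{L^{p'}}=1\), solve the adjoint Poisson problem \(L^*\psi=\mathrm{div} H\) with zero Dirichlet data, and use \((R)_p\) to control \(\psi\) in the appropriate \(L^{p'}\) sense. Integrating by parts moves the pairing onto \(F\). The terms in \(F\) live at distance \(\approx r\) from \(\Delta_r\) and are bounded by
\[r^{-1}\int_{T(\Delta_{2r})}\big(|u|/r+|\nabla u|\big)\,dx .\]

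\textbf{Step 3 (Poincaré).} Since \(u=0\) on \(\Delta_{2r}\), the boundary Poincaré inequality gives \(\int_{T(\Delta_{2r})}|u|\lesssim r\int_{T(\Delta_{2r})}|\nabla u|\). This removes the \(|u|/r\) term. The Whitney-scale estimates (Caccioppoli at scale \(r\) together with Hölder) then yield the factor \(r^{\frac{n-1}{p}-n}\) from the \(\sigma(\Delta_r)^{1/p}\approx r^{(n-1)/p}\) normalization.

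\textbf{Main obstacle.} The hard part is Step 2: showing that \((R)_p\) controls the adjoint Poisson-type solution well enough to bound the inhomogeneity, or equivalently that the localization lemma of \cite{feneuil_lp_2024} has a Dirichlet counterpart. I would first try to cite the \((R)_p\Rightarrow(wPD)_{p'}\) implication from \cite{mourgoglou_solvability_2025} and the Dirichlet analogue of Lemma 4.1 in \cite{feneuil_lp_2024}, exactly as was done for the Neumann case.
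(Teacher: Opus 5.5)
\textbf{Verdict: genuine gap.} Your self-contained sketch has the same architecture as the paper's proof. Both use a cut-off, duality via \refprop{prop:DualityNontangnetialCarleson}, and the adjoint Poisson--Dirichlet solution $\psi$ of $L^*\psi=\mathrm{div}H$ with $\psi=0$ on $\partial\Omega$. Integration by parts then leaves only $\int u\,A\nabla\eta\cdot\nabla\psi$ and $\int A\nabla u\cdot\nabla\eta\,\psi$, and boundary Poincar\'e finishes.

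\textbf{The missing estimate.} The proposal stops at the one step that carries the content. The bound you state in Step 2, $r^{-1}\int_{T(\Delta_{2r})}(|u|/r+|\nabla u|)\,dx$, has no factor coming from $\psi$, so it cannot produce $r^{\frac{n-1}{p}-n}$. The missing factor $r^{1-n+\frac{n-1}{p}}$ must come from a bound on $\psi$ near $\mathrm{supp}\nabla\eta$; no Caccioppoli estimate for $u$ can supply it. The needed input is $\Vert\tilde N(\psi)\Vert_{L^{p'}(\partial\Omega)}\lesssim\Vert\tilde{\mathcal C}(\delta H)\Vert_{L^{p'}(\partial\Omega)}=1$, i.e.\ solvability of the Poisson--Dirichlet problem $(PD)_{p'}$. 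The paper obtains this from $(D)_{p'}$ via Theorem 1.22 of \cite{mourgoglou_solvability_2025}. Combined with the cone decomposition $\int_{T(\Delta_{2r})}|\psi|\lesssim r\int_{\Delta_{2r}}\tilde N(\psi)\,d\sigma$ and H\"older on $\Delta_{2r}$, it gives $\fint_{T(\Delta_{2r})}|\psi|\lesssim r^{1-n+\frac{n-1}{p}}$. This is where $\sigma(\Delta_{2r})^{1/p}$ actually enters. Appealing to an unspecified ``Dirichlet analogue of Lemma 4.1'' or to ``$(wPD)_{p'}$'' does not replace this; the paper proves the lemma by hand through exactly this chain.

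\textbf{Two further missing ingredients.} Turning that bound into estimates on $\mathrm{supp}\nabla\eta$ requires two things your sketch does not set up.
\begin{enumerate}
\item \emph{Localize the dualizing function.} Only $\tilde N_{r/2}(\nabla u)$ on $\Delta_r$ is being estimated, so $H$ can be taken supported in $T(\Delta_{3r/2})$. Then $\psi$ solves $L^*\psi=0$ with zero Dirichlet data on $T(\Delta_{2r})\setminus T(\Delta_{3r/2})$, and boundary Moser and Caccioppoli apply to $\psi$ there. Your Step 2 instead tests the global norm $\Vert\tilde N(\nabla v)\Vert_{L^p}$. Then $H$ may sit on $\mathrm{supp}F$, where $\psi$ has no such control. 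The global norm also involves $\eta\nabla u$ near boundary points of $\Delta_{7r/4}\setminus\Delta_{3r/2}$, which is not controlled a priori. The separation that matters is between $\mathrm{supp}F$ and $\mathrm{supp}H$, not between $\mathrm{supp}F$ and $\Delta_r$.
\item \emph{Put the supremum on the right factor.} In $\int u\,A\nabla\eta\cdot\nabla\psi$ you cannot pair $\int|u|/r$ with a pointwise bound on $\nabla\psi$; none exists for bounded measurable $A$. The paper puts the supremum on $u$, via boundary Moser using $u=0$ on $\Delta_{2r}$. It bounds $\int_{\mathrm{supp}\nabla\eta}|\nabla\psi|\lesssim r^{n-1}\fint_{T(\Delta_{2r})}|\psi|$ by Caccioppoli and Moser for $\psi$. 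Only then does it apply Poincar\'e to $\fint_{T(\Delta_{2r})}|u|$. For $\int A\nabla u\cdot\nabla\eta\,\psi$ it uses $\sup_{\mathrm{supp}\nabla\eta}|\psi|\lesssim\fint_{T(\Delta_{2r})}|\psi|$, again by Moser.
\end{enumerate}
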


\begin{proof}
    As in \cite{feneuil_lp_2024}, by the duality \eqref{eq:DualityNontangentialCarelsonFct} and an approximation argument, there exists a function \(h\in C_c(T(\Delta_{\frac{3}{2}r}))\) with \(\Vert \tilde{\mathcal{C}}(\delta h)\Vert_{L^{p'}(\partial\Omega)}=1\) such that
    \begin{align*}
        \Vert \tilde{N}_{r/2}(\nabla u)\Vert_{L^p(\partial\Omega)}\lesssim \int_\Omega \nabla u \cdot h dx.
    \end{align*}
    Let \(\phi\in C^\infty_c(T(\Delta_{\frac{14}{8}r}))\) be a cut-off function, i.e., \(\phi\equiv 1\) on \(T(\Delta_{\frac{13}{8}r})\) and \(|\nabla \phi|\lesssim \frac{1}{r}\). Then by integration by parts, we obtain
    \begin{align*}
        \int_\Omega \nabla u \cdot h dx=\int_\Omega \nabla u \cdot h \phi dx = -\int_\Omega  u h \cdot \nabla\phi dx - \int_\Omega u \phi \, \mathrm{div}(h) dx=:I_1+I_0.
    \end{align*}
    Since \(\nabla\phi\equiv 0\) on \(\Delta_{\frac{3}{2}r}=\mathrm{\supp}(h)\), we obtain \(I_1=0\).
    Now, let \(v\in W^{1,2}(\Omega)\) be the solution to the Poisson-Dirichlet problem
    \[\begin{cases} L^*v=\mathrm{div}(h) & \textrm{in }\Omega, \\ v=0 &\textrm{on }\partial\Omega.\end{cases}\]
    Then we can continue \(I_0\) with
    \begin{align*}
        I_0&=-\int_\Omega u\phi \, L^*v dx\\
        &=\int_\Omega A\nabla u \cdot \nabla v \, \phi dx + \int_\Omega A\nabla \phi \cdot \nabla v \, u dx -\int_{\partial\Omega}u\phi \, A^*\nabla v\cdot \nu  d\sigma
        \\
        &=:I_{00}+I_2+B.
    \end{align*}
    Since \(u\phi\) vanishes on \(\partial\Omega\), the last term \(B\) is zero. By use of the PDE \(Lu=0\) we obtain for the first term
    \begin{align*}
        I_{00}=\int_\Omega A\nabla u\cdot\nabla (v\phi) dx - \int_\Omega A\nabla u\cdot \nabla \phi \, v dx= -\int_\Omega A\nabla u\cdot \nabla \phi \, v dx=:I_3.
    \end{align*}
    We now bound the remaining terms \(I_2\) and \(I_3\) all with similar methods.
    
    First, for \(I_2\), we have by the Moser estimate for \(u\) 
    \begin{align*}
        I_2&\lesssim \frac{1}{r}\Big(\int_{T(\Delta_{\frac{14}{8}r})\setminus T(\Delta_{\frac{13}{8}r})}|\nabla v|dx\Big)\sup_{T(\Delta_{\frac{14}{8}r})}|u|
        \\
        &\lesssim r^{n-1}\Big(\fint_{{T(\Delta_{\frac{14}{8}r})\setminus T(\Delta_{\frac{13}{8}r})}}|\nabla v|dx\Big)\fint_{T(2\Delta)}|u|dx.
    \end{align*}
    Since \(v\) is a solution to the homogeneous Poisson-Dirichlet problem \(L^*v=0\) on \(T(\Delta_{2r})\setminus T(\Delta_{\frac{3}{2}r})\), we can apply the boundary Cacciopolli inequality to bound
    \begin{align*}r^{n-1}\fint_{{T(\Delta_{\frac{14}{8}r})\setminus T(\Delta_{\frac{13}{8}r})}}|\nabla v|dx\lesssim r^{n-1}\Big(\fint_{{T(\Delta_{\frac{14}{8}r})\setminus T(\Delta_{\frac{13}{8}r})}}|\nabla v|^2dx\Big)^{1/2}
    \\
    \lesssim r^{n-2}\Big(\fint_{T(\Delta_{\frac{15}{8}r})\setminus T(\Delta_{\frac{3}{2}r})}|v|^2dx\Big)^{1/2}.\end{align*}
    By Moser estimates (\refprop{prop:MoserEstimate}), we obtain
    \begin{align*}
        r^{n-2}\Big(\fint_{T(\Delta_{\frac{15}{8}r})\setminus T(\Delta_{\frac{3}{2}r})}|v|^2dx\Big)^{1/2}\lesssim r^{n-2}\fint_{T(\Delta_{2r})}|v|dx\lesssim r^{-2}\int_{T(\Delta_{2r})}|v|dx.
    \end{align*}
    We now split the solid integral into integrating over cones and then the boundary (cf. Lemma 3.14 in \cite{milakis_harmonic_2013}), and obtain
    \begin{align}
        r^{-2}\int_{T(\Delta_{2r})}|v|dx&\lesssim r^{-2}\int_{\Delta_{2r}}\Big(\int_{\Gamma^{2r}(x)}|v(y)|\delta(y)^{-n+1}dy\Big)  d\sigma(x)\nonumber
        \\
        &\lesssim r^{-2}\int_{\Delta_{2r}}\tilde{N}(v)(x)\Big(\int_{\Gamma^{2r}(x)}\delta(y)^{-n+1}dy\Big)  d\sigma(x).\nonumber
    \end{align}
    Note that the integral over the truncated cone yields \(\int_{\Gamma^{2r}(x)}\delta(y)^{-n+1}dy\approx r\). We obtain by Hölder's inequality
    \[r^{-1}\int_{\Delta_{2r}}\tilde{N}(v)d\sigma\lesssim r^{-1+(n-1)/p}\big(\int_{\Delta_{2r}}\tilde{N}(v)^{p'}d\sigma\Big)^{1/p'}.\]
    By Theorem 1.22 in \cite{mourgoglou_solvability_2025}, the solvability of \((D)_{p'}\) is equivalent to the solvability of the Poisson-Dirichlet problem \((PD)_{p'}\) (introduced in Definition 1.17 in \cite{mourgoglou_solvability_2025}), which gives the estimate 
    \[\Vert \tilde{N}(v)\Vert_{L^{p'}(\partial\Omega)}\lesssim \Vert \tilde{\mathcal{C}}(\delta |h|)\Vert_{L^{p'}}=1,
    \] hence in total
    \begin{align}
        r^{n-2}\fint_{T(\Delta_{2r})}|v|dx&\lesssim r^{-1+\frac{n-1}{p}},\label{eq:ArgForv}
    \end{align}
    and by the boundary Poincar\@{e} inequality
    \[
    I_2\lesssim r^{\frac{n-1}{p}-n}\int_{T(\Delta_{2r})}|\nabla u| dx\] follows.
    
    Lastly, for \(I_3\), we have by Moser estimates (\refprop{prop:MoserEstimate}) for \(v\) in the set where \(Lv=0\)
    \begin{align*}
        I_3&\lesssim \frac{1}{r}\Big(\int_{T(\Delta_{2r})}|\nabla u|dx\Big)\sup_{T(\Delta_{\frac{14}{8}r}\setminus \Delta_{\frac{13}{8}r} )}|v|
        \\
        &\lesssim r^{-1}\Big(\int_{T(\Delta_{2r})}|\nabla u|dx\Big)\Big(\fint_{T(\Delta_{2r})}|v|dx\Big)
        \lesssim r^{\frac{n-1}{p}-n}\int_{T(\Delta_{2r})}|\nabla u| dx,
    \end{align*}
    where the last inequality follows from \eqref{eq:ArgForv}. This completes the proof.  

\end{proof}

\begin{lemma}\label{lemma:LocalNontangnetialEstimateMixed}
    Let \(\Delta=\Delta_r\) be a boundary cube with \(\mathrm{dist}(\Delta_r,\Lambda)\leq 2r\), and let \(p_0>2\) from \reflemma{lemma:RevHolderBoundary}. For every \(1<q<\min(p, p_0/2)\) there exists \(\EPS_0=\EPS_0(p_0,q)>0\) such that if \refass{ass:EPS0} holds with \(\EPS_0\), then there exists \(C>0\) such that
    \[\Big(\fint_{\Delta_r} \tilde{N}_{r/2}(\nabla u)^qdx\Big)^{1/q}\leq C\Vert g_\mathcal{N}\Vert_{L^\infty(\Delta_{3r})}+C\Big(\fint_{T(\Delta_{3r})}|\nabla u|^2 dx\Big)^{1/2}.\]
\end{lemma}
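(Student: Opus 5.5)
We will reduce the mixed case to the two pure cases, Lemma \ref{lemma:LocalNontangnetialEstimatePureNeumann} and Lemma \ref{lemma:LocalNontangnetialEstimatePureDirichlet}, through a Whitney-type decomposition of \(\Delta_r\) relative to the interface \(\Lambda\). The singularity of \(\nabla u\) at \(\Lambda\) will be controlled by combining the reverse Hölder inequality, Lemma \ref{lemma:RevHolderBoundary}, with the boundary Hölder continuity, Proposition \ref{prop:BoundaryHoelder}. The summation over the decomposition is then closed with the weighted integrability of \(\tilde{\delta}\) given by Assumption \ref{ass:EPS0}.

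\textbf{Step 1: localization near \(\Lambda\).} For \(x\in\Delta_r\) and \(y\in\Gamma^{r/2}(x)\), we split according to whether \(\delta(y)\gtrsim\tilde{\delta}(x)\) or \(\delta(y)\ll\tilde{\delta}(x)\).
\begin{enumerate}[(i)]
\item In the first regime, \(B_{\delta(y)/2}(y)\) lies within distance \(\approx\delta(y)\) of \(\Lambda\). We apply Lemma \ref{lemma:RevHolderBoundary} at scale \(\delta(y)\) to a boundary ball \(\Delta'\) centered near \(\Lambda\), with \(\Delta'\cap\mathcal{D}\) containing a corkscrew ball by Assumption \ref{ass:corkscrew}. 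Combining Caccioppoli with Proposition \ref{prop:MoserEstimate} (iii) and Proposition \ref{prop:BoundaryHoelder}, this gives, after subtracting the Neumann contribution (handled as in Lemma \ref{lemma:LocalNontangnetialEstimatePureNeumann} via an auxiliary solution with data \(g_\mathcal{N}1_{\Delta_{3r}}\), and bounded by \(\|g_\mathcal{N}\|_{L^\infty}\)),
\[\Big(\fint_{B_{\delta(y)/2}(y)}|\nabla u|^2\Big)^{1/2}\lesssim \Big(\frac{\delta(y)}{r}\Big)^{\beta-1}\Big(\fint_{T(\Delta_{3r})}|\nabla u|^2\Big)^{1/2}+\|g_\mathcal{N}\|_{L^\infty(\Delta_{3r})}.\]
Here \(\beta\) is the Hölder exponent of \(u\) near \(\Lambda\).
\item In the second regime, \(x\) lies in a Whitney boundary ball \(\Delta_s(z)\) with \(s\approx\tilde{\delta}(x)/10\) and \(\Delta_{2s}(z)\) contained entirely in \(\mathcal{N}\) or entirely in \(\mathcal{D}\). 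There we apply the pure lemmas at scale \(s\) with exponent \(q\) in place of \(p\); this is legitimate since \(q<p\), and solvability of \((N)_p,(R)_p\) interpolates or extrapolates down to \(q\), or we simply use Hölder.
\end{enumerate}

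\textbf{Step 2: the Whitney balls.} For each Whitney ball, the pure lemma yields
\[\|\tilde{N}_{s/2}(\nabla u)\|^q_{L^q(\Delta_s)}\lesssim s^{n-1}\Big(\fint_{T(\Delta_{2s})}|\nabla u|\Big)^q+\|g_\mathcal{N}\|^q_{L^q(\Delta_{2s})}.\]
The solid averages are again bounded using Step 1, by \(\approx(s/r)^{(\beta-1)}\) times the \(L^2\) average over \(T(\Delta_{3r})\), plus the Neumann term. Summing over Whitney balls reduces the whole estimate to a bound on
\[\fint_{\Delta_r}\Big(\frac{\tilde{\delta}(x)}{r}\Big)^{q(\beta-1)}d\sigma(x).\]

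\textbf{Step 3: the main obstacle.} By Assumption \ref{ass:EPS0}, the integral above is finite and \(\approx 1\) as long as \(q(1-\beta)<1-\EPS_0\). The difficulty is that \(\beta\) from Proposition \ref{prop:BoundaryHoelder} is not quantified, so we cannot use it directly. Instead we will quantify the effective Hölder exponent through the reverse Hölder exponent: Lemma \ref{lemma:RevHolderBoundary} with Morrey/Sobolev-type reasoning gives decay \(\int_{T(\Delta_t)}|\nabla u|^2\lesssim (t/r)^{n-2+2(1-2/p_0)\cdot(\ldots)}\). More carefully, one should estimate \(\tilde{N}\) in \(L^q\) directly via Hölder:
\[\int_{\Delta_r}\tilde{N}^q\le\Big(\int \tilde{N}^{p_0/2}\tilde{\delta}^{a}\Big)^{\frac{2q}{p_0}}\Big(\int\tilde{\delta}^{-a\frac{q}{p_0/2-q}}\Big)^{1-\frac{2q}{p_0}}.\]
The first factor is controlled by the \(L^{p_0}\) reverse Hölder bound through the relation between boundary and solid integrals, using the solid part of Assumption \ref{ass:EPS0}. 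The second factor is controlled by the boundary part of Assumption \ref{ass:EPS0}, which requires \(\EPS_0\) small depending on \(q,p_0\). The constraint \(q<p_0/2\) is exactly what makes such an exponent \(a\) admissible, and that is the step I expect to require the most care in balancing the weights.
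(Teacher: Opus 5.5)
\textbf{There is a genuine gap.} As you note yourself, Steps 1--2 cannot give any range of $q$, because the exponent $\beta$ of Proposition \ref{prop:BoundaryHoelder} is not quantified. So everything rests on Step 3, and there the first factor $\int_{\Delta_r}\tilde{N}_{r/2}(\nabla u)^{p_0/2}\tilde{\delta}^{a}\,d\sigma$ is never actually bounded. Two things fail.

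\emph{The exponent $p_0/2$.} The only tool for turning a weighted boundary $L^P$ norm of $\tilde{N}$ into a weighted solid integral is to apply Lemmas \ref{lemma:LocalNontangnetialEstimatePureNeumann} and \ref{lemma:LocalNontangnetialEstimatePureDirichlet} on Whitney balls with exponent $P$. These lemmas need solvability of the pure problems at exponent $P$, so $P=p_0/2$ is not allowed when $p<p_0/2$. You should take an intermediate $P\in(q,\min\{p,p_0\})$; the paper takes $P=p$. The admissibility condition on the weights still reduces to $q<p_0/2$, independently of $P$.

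\emph{The top of the cone (the more serious problem).} On a Whitney ball $W_i\ni x$, the pure lemmas only control the truncated function $\tilde{N}_{l(W_i)/2}(\nabla u)(x)$, i.e.\ cone points at height $\lesssim\tilde{\delta}(x)$. But $\tilde{N}_{r/2}(\nabla u)(x)$ also sees all $y\in\Gamma(x)$ with $\tilde{\delta}(x)\lesssim\delta(y)\le r/2$. There the balls $B_{\delta(y)/2}(y)$ sit at the scale of their distance to $\Lambda$, so neither pure lemma applies. Your only estimate in that region is regime (i), which again depends on the unquantified $\beta$. The phrase ``relation between boundary and solid integrals'' does not address this.

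\emph{The missing idea.} The paper first applies Lemma \ref{lemma:LocalNontangnetialEstimatePureNeumann} (using $(N)_q$) to $u$ itself. It regards the conormal derivative $A\nabla u\cdot\nu$ as Neumann data on all of $\Delta_{2r}$, on both $\mathcal{N}$ and $\mathcal{D}$, which gives
\[
\Big(\fint_{\Delta_r}\tilde{N}_{r/2}(\nabla u)^q\,d\sigma\Big)^{1/q}\lesssim\Big(\fint_{\Delta_{2r}}|A\nabla u\cdot\nu|^q\,d\sigma\Big)^{1/q}+\fint_{T(\Delta_{2r})}|\nabla u|\,dx .
\]
Now only a boundary function has to be estimated. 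On each Whitney ball it satisfies $|A\nabla u\cdot\nu|\le\tilde{N}_{l(W_i)/2}(\nabla u)$ a.e., so the top of the cone never appears. The rest of the paper's argument is:
\begin{enumerate}
\item H\"older on the boundary between $q$ and $p$ with weight $\tilde{\delta}^{1-s}$;
\item the pure lemmas at exponent $p$ on the $W_i$;
\item summation into $r\fint_{T(\Delta_{2r})}\tilde{\delta}^{-s}|\nabla u|^p$;
\item H\"older in the solid between $p$ and $p_0$, followed by Lemma \ref{lemma:RevHolderBoundary}.
\end{enumerate}
The three integrability requirements on $s$ from Assumption \ref{ass:EPS0} are compatible exactly when $-\frac{1-\EPS_0}{q}<-\frac{2-\EPS_0}{p_0}$. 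No boundary H\"older continuity is used.

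Alternatively, you could treat the top of the cone directly. Bound the supremum over Whitney cubes $W\subset\Gamma(x)$ with $l(W)\gtrsim\tilde{\delta}(x)$ by the $\ell^P$ sum, exchange the order of integration, and use that $\delta\approx\tilde{\delta}$ on those cubes together with Assumption \ref{ass:EPS0}. Some argument of this kind is needed, and your proposal does not supply one.
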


\begin{proof}
    Recall that \(p>1\) was chosen such that the Neumann problem \((N)_q\) is solvable for all \(1<q\leq p\). Then, we get by \reflemma{lemma:LocalNontangnetialEstimatePureNeumann},
    \begin{align*}
        \Big(\fint_{\Delta_r} \tilde{N}_{r/2}(\nabla u)^qdx\Big)^{1/q}\lesssim \Big(\fint_{\Delta_{2r}} |A\nabla u\cdot \nu|^qdx\Big)^{1/q} + \fint_{T(\Delta_{2r})}|\nabla u| dx.
    \end{align*}
    We will bound the first term by 
    \begin{equation}
        \label{eq:WTSestimate}
    \begin{aligned}
     &   \Big(\fint_{\Delta_{2r}} |A\nabla u\cdot \nu|^qdx\Big)^{1/q}\lesssim \Big(\fint_{T(\Delta_{2r})}|\nabla u|^2 dx\Big)^{1/2} + \Vert g_N\Vert_{L^\infty(\Delta_{2r})},
    \end{aligned}
    \end{equation}
    which yields the statement of the lemma. Hence it remains to prove the inequality in \eqref{eq:WTSestimate}.

    For this, first we let \(s>0\) be a parameter to be chosen later. By H\"{o}lder's inequality, we have
    \begin{align}
        \Big(\fint_{\Delta_r}|A\nabla u\cdot \nu|^qdx\Big)^{1/q}\lesssim \Big(\fint_{\Delta_r}|A\nabla u\cdot \nu|^{p}\tilde{\delta}^{1-s}dx\Big)^{\frac{1}{p}}\Big(\fint_{\Delta_r}\tilde{\delta}^{-\frac{(1-s)q}{p-q}}dx\Big)^{\frac{1}{q}-\frac{1}{p}}.\label{eq:Proof_4}
    \end{align}
    For the first term, we decompose \(\Delta_r\) into boundary Whitney cubes \(W_i\) with respect to the interface \(\Lambda\). These Whitney balls are chosen such that the collection of enlargements \(2W_i\) cover \(\Delta_{2r}\) with finite overlap, i.e., there exists \(M\in \mathbb{N}\) such that \(\chi_{\Delta_r}\leq\sum_{i}\chi_{W_i}\leq \sum_{i}\chi_{2W_i}\leq M \chi_{\Delta_{2r}}\), and that \(W_i\) is a boundary ball with \(l(W_i)\approx \mathrm{dist}(W_i,\Lambda)\).

    Since the enlargements \(2W_i\) of each of the Whitney balls \(W_i\) still lie either entirely in \(\mathcal{D}\) or \(\mathcal{N}\), we can apply \reflemma{lemma:LocalNontangnetialEstimatePureDirichlet} or \reflemma{lemma:LocalNontangnetialEstimatePureNeumann} to obtain
    \begin{align*}
        &\Big(\fint_{\Delta_r}|A\nabla u\cdot \nu|^{p}\tilde{\delta}^{1-s}dx\Big)^{\frac{1}{p}}=\Big(r^{-n+1}\sum_i\int_{W_i}|A\nabla u\cdot \nu|^{p}\tilde{\delta}^{1-s}dx\Big)^{\frac{1}{p}}
        \\
        &\leq \Big(r^{-n+1}\sum_il(W_i)^{1-s}\int_{W_i}\tilde{N}_{l(W_i)/2}(|\nabla u|)^{p}dx\Big)^{\frac{1}{p}}
        \\
        &\lesssim \Big[r^{-n+1}\sum_il(W_i)^{1-s}\Big(\big(\int_{2W_i}\chi_\mathcal{N}g_\mathcal{N}^{p}dx\big)^{1/p} + l(W_i)^{(n-1)/p}\fint_{T(2W_i)}|\nabla u|dx\Big)^{p}\Big]^{\frac{1}{p}}.
    \end{align*}
    We can simplify with Jensen's inequality and further bound this by
    \[
        \lesssim \left[\Vert\chi_\mathcal{N} g_\mathcal{N}\Vert_{L^\infty(\Delta_{2r})}^p\fint_{\Delta_{2r}}\tilde{\delta}^{1-s}dx + r\fint_{T(\Delta_{2r})}\tilde{\delta}^{-s}|\nabla u|^pdx\right]^{\frac{1}{p}}.
        \]
    Without loss of generality, we can assume \(p<p_0\) and we can bound the previous expression by
    \begin{align}
        &\lesssim \Vert \chi_\mathcal{N}g_\mathcal{N}\Vert_{L^\infty(\Delta_{2r})}\Big(\fint_{\Delta_{2r}}\tilde{\delta}^{1-s}dx\Big)^{1/p} + \Big(r\fint_{T(\Delta_{2r})}\tilde{\delta}^{-s}|\nabla u|^pdx\Big)^{\frac{1}{p}}\nonumber
        \\
        &\lesssim \Vert \chi_\mathcal{N}g_\mathcal{N}\Vert_{L^\infty(\Delta_{2r})}\Big(\fint_{\Delta_{2r}}\tilde{\delta}^{1-s}dx\Big)^{1/p} + r^{1/p}\Big(\fint_{T(\Delta_{2r})}|\nabla u|^{p_0}dx\Big)^{\frac{1}{p_0}}\Big(\fint_{T(\Delta_{2r})}\tilde{\delta}^{-s\frac{p_0}{p_0-p}}dx\Big)^{\frac{p_0-p}{p_0p}}\label{eq:Proof_1},
    \end{align}
    where we used Hölder's inequality in the last line and \(p_0\) is the exponent of the reverse Hölder's inequality (\reflemma{lemma:RevHolderBoundary}). Now, \reflemma{lemma:RevHolderBoundary} yields
    \begin{align}\Big(\fint_{T(\Delta_{2r})}|\nabla u|^{p_0}dx\Big)^{\frac{1}{p_0}}&\lesssim \fint_{T(\Delta_{3r})}|\nabla u|dx + \Vert \chi_\mathcal{N}g_\mathcal{N}\Vert_{L^\infty(\Delta_{3r})}\nonumber
    \\
    &\lesssim \Big(\fint_{T(\Delta_{3r})}|\nabla u|^2dx\Big)^{1/2} +\Vert\chi_\mathcal{N} g_\mathcal{N}\Vert_{L^\infty(\Delta_{3r})}.\label{eq:Proof_2}\end{align}
    Hence, we obtain in total, collecting the estimates in \eqref{eq:Proof_4}, \eqref{eq:Proof_1}, and \eqref{eq:Proof_2} that
    \begin{align}
        \Big(\fint_{\Delta_r}|A\nabla u\cdot \nu|^qdx\Big)^{1/q}
        \lesssim \Big(\fint_{\Delta_r}\tilde{\delta}^{-\frac{(1-s)q}{p-q}}dx\Big)^{\frac{1}{q}-\frac{1}{p}}\Big[\Vert \chi_{\mathcal{N}}g_N\Vert_{L^\infty(\Delta_{2r})}\Big(\fint_{\Delta_{2r}}\tilde{\delta}^{1-s}dx\Big)^{1/p}\nonumber
        \\
        + r^{1/p}\Big(\Vert \chi_\mathcal{N}g_\mathcal{N}\Vert_{L^\infty(\Delta_{3r})}
        + \Big(\fint_{T(\Delta_{3r})}|\nabla u|^2dx\Big)^{\frac{1}{2}}\Big)\Big(\fint_{T(\Delta_{2r})}\tilde{\delta}^{-s\frac{p_0}{p_0-p}}dx\Big)^{\frac{p_0-p}{p_0p}}\Big].\label{eq:longequation_proof}
    \end{align}
    At this point we would like to choose \(s\) such that all integrals in \eqref{eq:longequation_proof} converge. According to \refass{ass:EPS0} we need that
    \begin{align*}
        -\frac{(1-s)q}{p-q}> -1+\EPS_0, \quad 1-s> -1+\EPS_0, \quad\textrm{ and }\quad s< (2-\EPS_0)(1-\frac{p}{p_0}).
    \end{align*}
    Such a choice of \(s\) is possible if and only if 
    \begin{align*}
        1+(1-\EPS_0)(1-\frac{p}{q})< (2-\EPS_0)(1-\frac{p}{p_0}),
    \end{align*}
    which is equivalent to 
    \begin{align}
        -\frac{1}{q}+\EPS_0\frac{1}{q}<-\frac{2}{p_0}+\EPS_0\frac{1}{p_0}.\label{eq:FinalCondEqFors}
    \end{align}
    To satisfy \eqref{eq:FinalCondEqFors} we can choose any \(q<p_0/2\) with a sufficiently small \(\EPS_0\).
    Hence, we obtain \eqref{eq:WTSestimate} for \(q<\min\{p,p_0/2\}\) and a sufficiently small \(\EPS_0=\EPS(q,p_0)>0\), which finishes the proof.
\end{proof}

\begin{rem}\label{rem:DependenciesOfqp0}
    For the previous proof, we would like to also give some more perspective on the dependencies of \(\EPS_0, p_0\) and the solvability exponent \(q\) which is given by \eqref{eq:FinalCondEqFors}. In principle, if \(p< p_0/2\), we can reach up to the same solvability exponent for the mixed boundary value problem as for the pure Neumann boundary value problem for some small \(\EPS_0>0\). Furthermore, we can observe that the smaller \(q\) we have, the larger we can allow \(\EPS_0\) to be. 

    For instance, if \(L=\Delta\) is the Laplacian, then \(p=2+\EPS\) for some small \(\EPS>0\), and if the boundary \(\partial\Omega\) and the interface \(\Lambda\) are Reifenberg flat, we have \(p_0=4-\delta\) for any small \(\delta>0\). In that case, we can obtain the local nontangential maximal function estimate for every \(q\in [1,2)\), where \(\Lambda\) has to be flatter the closer \(q\) gets to \(2\). This was already proved in \cite{dong_dirichlet-conormal_2022}.
\end{rem}

\section{Existence of solutions to the \texorpdfstring{\(L^1\)}{} and \texorpdfstring{\(L^q\)}{} mixed boundary value problem}\label{section:ExistenceProof}

Since the Hardy space is the correct boundary value space for the \(L^1\)-solvability, we recall the following definition.
\begin{defin}[Hardy space]\label{def:HardySpaces}
    We say that \(a\) is Hardy atom on \(\mathcal{N}\) if \(a\) is supported in \(\mathcal{N}\cap \Delta_r\) for some boundary ball \(\Delta_r\) with \(r<R_0/2\) and such that
    \begin{align}\Vert a\Vert_{L^\infty}\leq\frac{1}{|\Delta_r|}, \quad\textrm{ and }\quad \fint_{\Delta_r}ad\sigma=0 \textrm{ when }\Delta_r\subset\mathcal{N}.\label{eq:HardyAtomProperty}\end{align}
    Then we say \(g\) lies in the Hardy space \(\mathcal{H}^1(\mathcal{N})\) if there exists a sequence of Hardy atoms \((a_i)_{i\in \mathbb{N}}\) and real coefficients \((b_i)_{i\in\mathbb{N}}\) such that
    \begin{align}
    g=\sum_{i\in\mathbb{N}}b_ia_i, \textrm{ with the norm }\Vert g\Vert_{\mathcal{H}^1(\mathcal{N})}:=\sum_{i\in\mathbb{N}}|b_i|<\infty,\label{eq:HardyNormAtom}
    \end{align}
    where the first equality is a distributional equality.

    Furthermore, we say \(a\) is a Hardy-Sobolev atom on \(\mathcal{D}\) if \(a\) is supported in \(\mathcal{D}\cap \Delta_r\) for some boundary ball \(\Delta_r\) with \(r<R_0/2\) and such that
    \begin{align}\Vert \nabla a\Vert_{L^\infty}\leq\frac{1}{|\Delta_r|}.\label{eq:HardySobolevAtomProperty}\end{align}

    Similarly, we say \(g\) lies in the Hardy-Sobolev space \(\dot{HS}^1(\mathcal{D})\) if there exists a sequence of Hardy-Sobolev atoms \((a_i)_{i\in \mathbb{N}}\) and real coefficients \((b_i)_{i\in\mathbb{N}}\) such that
    \begin{align}g=\sum_{i\in\mathbb{N}}b_ia_i, \textrm{ with norm }\Vert g\Vert_{\dot{HS}^1(\mathcal{N})}:=\sum_{i\in\mathbb{N}}|b_i|<\infty,\label{eq:HardySobolevNormAtom}\end{align}
    where the first equality is a distributional equality.
\end{defin}

As \(g_\mathcal{D}\in \dot{HS}^1(\mathcal{D})\),
the following lemma and the solvability of the \((R)_1\) allow us to reduce the mixed boundary value problem \eqref{eq:MixedPDE} to \eqref{PDEwithZeroDirichletdata} with boundary data \(g_\mathcal{D}\equiv 0\) and Hardy atom data \(g_\mathcal{N}\in\mathcal{H}^1(\mathcal{N})\).

\begin{lemma}[{\cite[Lemma 4.0.1]{mourgoglou_endpoint_2011}}]
    Suppose that \(u\in W^{1,2}_{\text{loc}}(\Omega)\) solves \(Lu=0\) and \(\tilde{N}(\nabla u)\in L^1(\partial\Omega)\). Then
    \begin{align*}
        \Vert A\nabla u\cdot \nu\Vert_{\mathcal{H}^1_{atom}(\partial\Omega)}\lesssim \Vert\tilde{N}(\nabla u)\Vert_{L^1(\partial\Omega)}.
    \end{align*}
\end{lemma}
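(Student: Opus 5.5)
The approach is to show that the conormal derivative of \(u\) is a boundary distribution which, after localization, splits into pieces that are (multiples of) Hardy atoms. The tools are a Whitney-type decomposition of \(\partial\Omega\), a partition of unity, and the Green formula in the distributional sense of \eqref{eq:weakConvNeumannData}.

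First we identify \(f:=A\nabla u\cdot\nu\) as a genuine function. Since \(\tilde N(\nabla u)\in L^1(\partial\Omega)\), the averages \(\fint_{B_{h/2}(x,t+h)}A\nabla u\,dy\) are dominated pointwise by \(\tilde N(\nabla u)\). By a compactness/weak-\(*\) argument in the space of measures, and using that the family is dominated by an \(L^1\) function, hence uniformly integrable, these averages have a limit \(f\in L^1(\partial\Omega)\) with \(|f|\le \tilde N(\nabla u)\) a.e. Moreover, for every \(\Phi\in C^\infty(\overline\Omega)\) we get the Green identity
\[\int_\Omega A\nabla u\cdot\nabla\Phi\,dx=\int_{\partial\Omega}f\Phi\,d\sigma.\]
We justify it by applying the weak formulation to \(\Phi\) multiplied by a cutoff \(\eta_h\) supported at distance \(\ge h\) from the boundary. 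The error term \(\int A\nabla u\cdot\nabla\eta_h\,\Phi\) is exactly an average of \(A\nabla u\) over a layer of width \(h\), and it converges to the boundary term by dominated convergence with majorant \(\tilde N(\nabla u)\). Choosing \(\Phi\equiv1\) gives \(\int_{\partial\Omega}f\,d\sigma=0\).

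Next we build the atomic decomposition. We take a dyadic (Christ-type) grid on \(\partial\Omega\) and a good-\(\lambda\)/Calderón–Zygmund decomposition of \(f\) at heights \(2^k\), using the maximal function \(M(\tilde N(\nabla u))\) in place of \(Mf\). Set \(O_k=\{M(\tilde N(\nabla u))>2^k\}\) with Whitney balls \(\Delta^k_j\) and a partition of unity \(\psi^k_j\). Following the standard atomic decomposition, we write \(f=\sum_{k,j}\beta^k_j\), where each \(\beta^k_j\) is supported in \(c\Delta^k_j\), has mean zero, and satisfies \(\|\beta^k_j\|_\infty\lesssim 2^k\). Then \(a^k_j=\beta^k_j/(2^k|\Delta^k_j|)\) are atoms and \(\sum 2^k|\Delta^k_j|\lesssim\sum 2^k|O_k|\lesssim\|M\tilde N(\nabla u)\|_{L^{1,\infty}}\)-type sums.

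This last step fails, because \(M\) is not bounded on \(L^1\). So instead we use the Coifman–Weiss characterization: since \(\partial\Omega\) is a space of homogeneous type, it suffices to show that \(\|f\|_{\mathcal H^1_{atom}}\approx\|f^*\|_{L^1}\), where \(f^*\) is the grand maximal function over normalized test functions \(\varphi\) adapted to balls \(\Delta_r(x)\). This reduces the lemma to the pointwise bound \(f^*(x)\lesssim \tilde N(\nabla u)(x)\) (or a nontangential maximal function of slightly larger aperture).

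To prove the pointwise bound, we fix \(x\in\partial\Omega\), \(r>0\), and a test function \(\varphi\) supported in \(\Delta_r(x)\) with \(\|\varphi\|_\infty\le r^{1-n}\) and \(\|\nabla\varphi\|_\infty\le r^{-n}\). We extend \(\varphi\) to \(\Phi\) on \(\Omega\) supported in \(T(\Delta_{2r}(x))\) with \(|\Phi|\lesssim r^{1-n}\), \(|\nabla\Phi|\lesssim r^{-n}\), and with \(\Phi\) vanishing for \(\delta\ge r\). The Green identity gives
\[\Big|\int f\varphi\,d\sigma\Big|\lesssim r^{-n}\int_{T(\Delta_{2r}(x))\cap\{\delta<r\}}|\nabla u|\,dx.\]
By Fubini over cones, \(\int_{T(\Delta_{2r})\cap\{\delta<r\}}|\nabla u|\lesssim r\int_{\Delta_{3r}}\tilde N(\nabla u)\,d\sigma\), which yields \(|\langle f,\varphi\rangle|\lesssim \fint_{\Delta_{3r}(x)}\tilde N(\nabla u)\). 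This is again a Hardy–Littlewood maximal function, which is the crux of the whole argument.

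The main obstacle is avoiding this loss. The fix I would try is to keep the solid region on the cone: choose the extension \(\Phi\) depending only on the distance \(\delta\) together with the cutoff \(\eta(\delta/r)\). Then \(\nabla\Phi\) is large, of size \(r^{-n}\), only in the layer \(\{r/2<\delta<r\}\cap T(\Delta_{2r})\), which lies inside a cone of fixed aperture at \(x\). That layer contributes \(\lesssim r^{-n}\cdot r^n\sup_{\Gamma(x)}(\text{averages})\lesssim\tilde N(\nabla u)(x)\). In the thin part, where \(\delta<r/2\), \(\nabla\Phi\) is the tangential gradient of \(\varphi\) extended constantly in the normal direction. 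To control this part we use the PDE: writing \(\Phi=\varphi\cdot\eta\), the term \(\int A\nabla u\cdot\nabla_T\varphi\,\eta\) is handled by integrating by parts in the tangential variable layer by layer. If this does not close, the fallback is the known route of \cite{mourgoglou_endpoint_2011}, namely the \(\mathcal H^1\)–BMO duality: test \(f\) against \(b\in\mathrm{BMO}\), extend \(b\) by its Varopoulos/Poisson-type extension whose gradient is a Carleson measure, and conclude with \refprop{prop:DualityNontangnetialCarleson} for \(p=1\), since \(\|\tilde{\mathcal C}(\delta\nabla B)\|_{L^\infty}\lesssim\|b\|_{BMO}\).
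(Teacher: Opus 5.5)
The paper gives no proof of this lemma; it is quoted from the cited thesis. Your proposal therefore has to stand on its own, and as written it does not close.

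Your main line reduces everything to a pointwise bound $f^*(x)\lesssim\tilde N(\nabla u)(x)$ for the grand maximal function. The only estimate you actually obtain is $|\langle f,\varphi\rangle|\lesssim\fint_{\Delta_{3r}(x)}\tilde N(\nabla u)\,d\sigma$, i.e.\ $f^*\lesssim M(\tilde N(\nabla u))$, and your proposed repair of the thin layer is circular. In flat coordinates, testing $\mathrm{div}(A\nabla u)=0$ against $\varphi(y)\chi(t)$ with $\chi\in C_c^\infty(0,\infty)$ shows
\[
H(t):=\int (A\nabla u)_\perp(y,t)\,\varphi(y)\,dy \quad\text{satisfies}\quad H'(t)=\int (A\nabla u)_\parallel(y,t)\cdot\nabla_T\varphi(y)\,dy .
\]
So integrating the thin-layer term $\int_0^{r/2}\!\int (A\nabla u)_\parallel\cdot\nabla_T\varphi\,dy\,dt$ ``by parts, layer by layer'' just returns $H(r/2)-H(0^+)$. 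But $H(0^+)$ is, up to sign, exactly the pairing $\langle f,\varphi\rangle$ you are trying to bound. For the Laplacian in a half-space the pointwise bound does hold, but for a different reason: $\partial_t u$ is itself harmonic, hence the Poisson extension of its trace, and one then uses the Poisson maximal characterization of $\mathcal H^1$. Nothing of this kind is available for merely bounded measurable $A$ on a Lipschitz domain.

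The same obstruction, not the unboundedness of $M$ on $L^1$, is what defeats your Calder\'on--Zygmund attempt. With $O_k=\{\tilde N(\nabla u)>2^k\}$ the sum $\sum_k 2^k|O_k|$ is harmless. What you need is $|\langle f,\psi^k_j\rangle|\lesssim 2^k\int\psi^k_j$, and Green's formula only bounds these averages by $\fint_{C\Delta^k_j}\tilde N(\nabla u)$.

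Your fallback is the right idea. The PDE enters only through the fact that $\int_\Omega A\nabla u\cdot\nabla\Phi$ depends only on the boundary values of $\Phi$, and duality lets you choose a good extension. But it is only named, and one ingredient is wrong as stated. A Poisson or harmonic extension $B$ of $b\in\mathrm{BMO}$ will not do: for it only $\delta|\nabla B|^2\,dx$ is a Carleson measure. The bound $\|\tilde{\mathcal C}(\delta\nabla B)\|_{L^\infty}\lesssim\|b\|_{\mathrm{BMO}}$ instead requires the Whitney averages $(\fint_{B_{\delta(y)/2}(y)}|\nabla B|^2)^{1/2}\,dy$ themselves to form a Carleson measure, and this fails for general $b\in\mathrm{BMO}$. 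You need a Varopoulos-type extension. In the classical construction its trace is $b-g$ with $\|g\|_{L^\infty}\lesssim\|b\|_{\mathrm{BMO}}$; the extra term $\langle f,g\rangle$ is then absorbed by $\|f\|_{L^1}\lesssim\|\tilde N(\nabla u)\|_{L^1}$ from your first step.

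To make this a proof you must also do two things. First, justify $\langle f,\mathrm{tr}\,B\rangle=\int_\Omega A\nabla u\cdot\nabla B$ for an extension that is neither compactly supported nor smooth up to $\partial\Omega$. Truncate at height $\epsilon$, use the Carleson duality to pass to the limit in the solid integral, and use nontangential convergence in the boundary layer. Second, conclude membership in $\mathcal H^1$ correctly. The bound $|\langle f,b\rangle|\lesssim\|\tilde N(\nabla u)\|_{L^1}\|b\|_{\mathrm{BMO}}$ for Lipschitz $b$ makes $f$ a bounded functional on $\mathrm{VMO}(\partial\Omega)$. Then $(\mathrm{VMO})^*=\mathcal H^1$ (Coifman--Weiss), together with $\int_{\partial\Omega}f\,d\sigma=0$, yields the lemma.
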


Now, assume that \(u\) solves \eqref{PDEwithZeroDirichletdata} with \(g_\mathcal{N}\) a Hardy atom on \(\mathcal{N}\).
Since \(g_\mathcal{N}\in L^\infty\subset W^{-1/2,2}(\partial\Omega)\), according to \refthm{thm:ExistenceofW12sol}, there exists a solution \(u\in W^{1,2}(\Omega)\) to \eqref{PDEwithZeroDirichletdata}.

 Set the annuli \[\Sigma_0:=\Delta_{2r} \quad \textrm{and}\quad\Sigma_k:=\Delta_{2^{k+1}r}\setminus \Delta_{2^{k}r}\quad \textrm{for }1\leq k\leq K. \]
Since our domain \(\Omega\) is bounded, we can choose \(K=K(r)\) as the largest constant such that \(2^Kr<R_0\). Recall that \(R_0\) is the constant from \refass{ass:corkscrew}, and that \(p_0\) is the reverse Hölder exponent from \reflemma{lemma:RevHolderBoundary} and \(p>1\) such that \((R)_p\) and \((N)_p\) are solvable.

\subsection{Decay estimates for the nontangential maximal function}
We have the following decay estimate.
\begin{prop}\label{prop:DecayEstimateClosetruncated}
    Let \(u\) be a solution to \eqref{PDEwithZeroDirichletdata} with the Hardy atom \(g_\mathcal{N}\) supported on \(\Delta_r\) with \(r\leq R_0\). Then there exists \(\beta>0\) such that for all \(q\in (1,\min\{p,p_0/2\})\) we have
    \[\Big(\fint_{\Sigma_k}\tilde{N}_{2^{k-2}r}(|\nabla u|)^qdx\Big)^{1/q}\lesssim 2^{-\beta k}|\Sigma_k|^{-1}\quad \textrm{for all }0\leq k\leq K\]
    and
    \begin{align}\Vert \tilde{N}_{R_0/4}(\nabla u)\Vert_{L^q(\partial\Omega\setminus\Delta_{2^Kr})}\leq C.\label{eq:DecayPropPartLargerR0}\end{align}
\end{prop}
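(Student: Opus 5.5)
The plan is to combine the local estimates of Section \ref{section:LocalEstimates} with an energy decay estimate for \(u\) away from the support of the atom. By \refthm{thm:ExistenceofW12sol}, \(u\in W^{1,2}(\Omega)\).

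\textbf{Step 1: the energy bound.} Since \(g_\mathcal{N}\) is an atom supported in \(\Delta_r\), duality and the trace/Poincaré inequalities give
\[
\int_\Omega |\nabla u|^2 \lesssim \Vert g_\mathcal{N}\Vert_{W^{-1/2,2}}^2 .
\]
Testing the equation with \(u\) itself, and using that \(u\) vanishes on \(\mathcal{D}\) (which has a corkscrew portion near \(\Delta_r\) when \(\Delta_r\) is close to \(\Lambda\)), we should get
\[
\Big(\fint_{T(\Delta_{4r})}|\nabla u|^2\Big)^{1/2}\lesssim |\Delta_r|^{-1}.
\]
This uses the scaling \(\Vert g\Vert_\infty\le |\Delta_r|^{-1}\). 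When \(\Delta_{2r}\subset\mathcal N\), the mean-zero property is needed to control the constant mode instead.

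\textbf{Step 2: the case \(k=0\).} Cover \(\Sigma_0\) by boundary balls of size \(\approx r\) and apply, according to the position of each ball relative to \(\Lambda\), \reflemma{lemma:LocalNontangnetialEstimatePureNeumann}, \reflemma{lemma:LocalNontangnetialEstimatePureDirichlet} or \reflemma{lemma:LocalNontangnetialEstimateMixed}. Their right-hand sides are bounded by \(\Vert g_\mathcal N\Vert_\infty\) plus the \(L^2\) average of \(\nabla u\), both \(\lesssim|\Sigma_0|^{-1}\) by Step 1.

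\textbf{Step 3: the case \(k\ge1\), which is the main obstacle.} On \(\Sigma_k\) the solution has zero data: zero Neumann data on \(\mathcal N\) and zero Dirichlet data on \(\mathcal D\). Cover \(\Sigma_k\) by finitely many balls \(\Delta\) of radius \(\approx 2^{k-2}r\) with \(3\Delta\) disjoint from \(\Delta_r\). The local lemmas then give
\[
\Big(\fint_{\Sigma_k}\tilde N_{2^{k-2}r}(\nabla u)^q\Big)^{1/q}\lesssim \Big(\fint_{T(\tilde\Sigma_k)}|\nabla u|^2\Big)^{1/2},
\]
where \(\tilde\Sigma_k\) is a slightly enlarged annulus. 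The remaining task is to show
\[
\Big(\fint_{T(\tilde\Sigma_k)}|\nabla u|^2\Big)^{1/2}\lesssim 2^{-\beta k}|\Sigma_k|^{-1}.
\]
By boundary Caccioppoli, this reduces to controlling the oscillation (or the size) of \(u\) on \(T(\tilde\Sigma_k)\) by \(2^{-\beta k}(2^kr)^{-(n-1)}\cdot 2^kr\). I would prove this decay by a duality/Green-function-type argument. Write \(u\) at points \(x\) of \(T(\tilde\Sigma_k)\) as the pairing of \(g_\mathcal N\) against the mixed Green function \(G(x,\cdot)\) on \(\Delta_r\). Then use the mean-zero property (or, near \(\mathcal D\), the vanishing of \(G\) on \(\mathcal D\)) together with \refprop{prop:BoundaryHoelder} applied to \(G(x,\cdot)\) on \(\Delta_{2^{k-1}r}\), which has zero mixed data there. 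This yields
\[
|u(x)|\lesssim \Vert g\Vert_{L^1}\,\mathrm{osc}_{T(\Delta_{r})}G(x,\cdot)\lesssim 2^{-\beta k}\sup_{T(\Delta_{2^{k-1}r})}|G(x,\cdot)|,
\]
and the standard size bound \(|G(x,y)|\lesssim |x-y|^{2-n}\) (plus Moser, \refprop{prop:MoserEstimate}) closes the estimate. If the Green function is unavailable, the alternative is the same argument phrased via \(W^{1,2}\) energy: apply boundary Hölder to \(u\) on the complement region with iteration over dyadic annuli.

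\textbf{Step 4: the far region \eqref{eq:DecayPropPartLargerR0}.} Cover \(\partial\Omega\setminus\Delta_{2^Kr}\) by boundedly many balls of radius \(\approx R_0\), apply the same local lemmas, and bound
\[
\int_\Omega|\nabla u|^2\lesssim \Vert g\Vert_{W^{-1/2,2}}^2
\]
at this scale. Boundedness of \(\Omega\) and \(2^Kr\approx R_0\) make all constants uniform, so the bound is \(\le C\).
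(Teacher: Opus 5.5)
Your Step 4 does not give a constant independent of the atom. That uniformity is exactly what is needed when the estimate is summed in the proof of part (a) of Theorem~\ref{MainThm}. The global energy is not uniformly bounded. Your own Step 1 gives $\Vert\nabla u\Vert_{L^2(\Omega)}\lesssim r^{1-n/2}$, and this is the true size, since $\Vert g_{\mathcal N}\Vert_{W^{-1/2,2}}\approx r^{1-n/2}$ for an atom at scale $r$. Inserting $\int_\Omega|\nabla u|^2$ into the local lemmas on balls of radius $\approx R_0$ therefore only yields $\Vert\tilde N_{R_0/4}(\nabla u)\Vert_{L^q(\partial\Omega\setminus\Delta_{2^Kr})}\lesssim R_0^{\frac{n-1}{q}-\frac n2}\,r^{1-\frac n2}$. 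This blows up as $r\to0$ when $n\ge3$, and neither the boundedness of $\Omega$ nor $2^Kr\approx R_0$ changes that. In addition, balls of radius $\approx R_0$ centred outside $\Delta_{2^Kr}$ have enlargements containing $\Delta_r$, so the local lemmas would pick up $\Vert g_{\mathcal N}\Vert_{L^\infty}\approx r^{1-n}$; you need radii equal to a small multiple of $R_0$. The fix is what the paper does. The far region lies at distance $\gtrsim 2^Kr\approx R_0$ from $\Delta_r$, so rerun the decay argument of Step 3 with $k=K$. This bounds the $L^2$ averages of $\nabla u$ there by $2^{-\beta K}R_0^{1-n}\lesssim 1$.

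Step 3 uses the right mechanism, but you implement it differently from the paper and rely on a tool the paper never supplies. You need three things: a mixed Green function for the variable-coefficient adjoint problem, the representation $u(x)=\int_{\Delta_r}g_{\mathcal N}\,G(x,\cdot)\,d\sigma$ for the Lax--Milgram solution, and the bound $|G(x,y)|\lesssim|x-y|^{2-n}$ up to the boundary. For $n\ge3$ these can be obtained by a Gr\"uter--Widman type construction from Propositions~\ref{prop:MoserEstimate} and~\ref{prop:BoundaryHoelder}, but that is extra work. For $n=2$ the size bound is false (it is logarithmic), so you would have to carry $\mathrm{osc}\,G(x,\cdot)$ rather than $\sup|G(x,\cdot)|$.

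The paper avoids Green functions entirely. It writes $\Vert\nabla u\Vert_{L^2(T(2\Delta^{(i,k)}))}=\int_\Omega\nabla u\cdot h$ with $\Vert h\Vert_{L^2}=1$, and solves $L^*v=\mathrm{div}(h)$ with zero mixed data by Lax--Milgram, so that $\Vert\nabla v\Vert_{L^2(\Omega)}\lesssim1$. This gives $\int_\Omega\nabla u\cdot h=\int_{\Delta_r}g_{\mathcal N}(v-v(x_0))\,d\sigma$. It then bounds $\mathrm{osc}_{\Delta_r}v\lesssim 2^{-\beta k}(2^kr)^{1-n/2}$ by boundary H\"older, Moser and Poincar\'e. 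The energy bound on $v$ replaces your pointwise bound on $G$, which keeps the argument self-contained and dimension-independent. Your version gives pointwise decay of $u$, but then needs Caccioppoli on top.

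Finally, your fallback (boundary H\"older for $u$ itself on the complement, iterated over annuli) would not give the rate. The factor $2^{-\beta k}$ comes from the cancellation of $g_{\mathcal N}$, or from vanishing on $\mathcal D$ near $\Delta_r$, paired with H\"older continuity of an adjoint solution at $\Delta_r$. Regularity of $u$ away from $\Delta_r$ cannot detect this: for non-cancelling Neumann data, $\int_{\Sigma_k}\tilde N(\nabla u)\,d\sigma\approx1$ for every $k$ with $2^kr$ below $\mathrm{dist}(\Delta_r,\mathcal D)$.
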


\begin{proof}
    For the local part \(k=0\), we have by \reflemma{lemma:LocalNontangnetialEstimatePureNeumann}, \reflemma{lemma:LocalNontangnetialEstimatePureDirichlet}, or \reflemma{lemma:LocalNontangnetialEstimateMixed},
    \begin{align*}
        \Big(\fint_{\Delta_{2r}}\tilde{N}_{r/4}(|\nabla u|)^qdx\Big)^{1/q}\lesssim \Vert g_\mathcal{N}\Vert_{L^\infty} + \Big(\fint_{T(\Delta_{4r})}|\nabla u|^2dx\Big)^{1/2}.
    \end{align*}
    Due to the embedding of \(L^{2(n-1)/n}\) into \(W^{-1/2,2}\), \refthm{thm:ExistenceofW12sol} yields
    \begin{align*}
        \Vert \nabla u\Vert_{L^2(\Omega)}\lesssim \Vert g_\mathcal{N}\Vert_{L^{2(n-1)/n}(\partial\Omega)}\lesssim \Vert g_\mathcal{N}\Vert_{L^\infty}|\Delta_r|^{\frac{n}{2(n-1)}},
    \end{align*}
    which implies that
    \[\Big(\fint_{T(\Delta_{4r})}|\nabla u|^2dx\Big)^{1/2}\lesssim r^{-\frac{n}{2}}\Vert \nabla u\Vert_{L^2(\Omega)}\lesssim \Vert g_\mathcal{N}\Vert_{L^\infty}.\]
    All together, since \(g_\mathcal{N}\) is a Hardy atom, \eqref{eq:HardyAtomProperty} implies
    \[\Big(\fint_{\Delta_{2r}}\tilde{N}_{r/4}(|\nabla u|)^qdx\Big)^{1/q}\lesssim \Vert g_\mathcal{N}\Vert_{L^\infty}\lesssim |\Delta_r|^{-1}.\]

    For the away annuli, \(\Sigma_k\), \(k\geq 1\), there are \(M\) boundary balls \(\Delta^{(i,k)}=\Delta_{2^{k-2}r}(y_i)\) whose union covers \(\Sigma_k\). Note that \(M\in\mathbb{N}\) is independent of \(k\). Then on each of these \(\Delta^{(i,k)}\) we have
    \begin{align*}
    \Big(\fint_{\Delta^{(i,k)}}\tilde{N}_{2^{k-2}r}(|\nabla u|)^qdx\Big)^{1/q}&\lesssim\fint_{T(2\Delta^{(i,k)})} |\nabla u| dx\\
    &\lesssim (2^kr)^{-n/2}\left(\int_{T(2\Delta^{(i,k)})} |\nabla u|^2 dx\right)^{1/2},    
    \end{align*}
    by \reflemma{lemma:LocalNontangnetialEstimatePureNeumann}, \reflemma{lemma:LocalNontangnetialEstimatePureDirichlet} or \reflemma{lemma:LocalNontangnetialEstimateMixed}.
    
    By duality there exists \(h\in L^2(T(2\Delta^{(i,k)}))\) with \(\Vert h\Vert_{L^2(\Omega)}=1\) such that
    \[\left(\int_{T(2\Delta^{(i,k)})} |\nabla u|^2 dx\right)^{1/2}=\int_\Omega \nabla u\cdot h dx.\]
    Let us choose \(v\) as the solution to
    \begin{align}\begin{cases}
    L^*v=\mathrm{div}(h)& \textrm{ in }\Omega,
    \\
    A^*\nabla v\cdot \nu=0 &\textrm{ on }\mathcal{N},
    \\
    v=0 &\textrm{ on }\mathcal{D}.
    \end{cases}
    \end{align}
    By Lax-Milgram, it is easy to see that \(v\in W^{1,2}(\Omega)\) with \[\Vert \nabla v\Vert_{L^2(\Omega)}\lesssim \Vert h\Vert_{L^2}=1.\]

Now together, we obtain by use of the PDE for \(v\) and the PDE for \(u\)
\begin{align}
    &(2^kr)^{n/2}\Big(\fint_{\Delta^{(i,k)}}\tilde{N}_{2^{k-2}r}(|\nabla u|)^qdx\Big)^{1/q}\lesssim\int_\Omega \nabla u\cdot h dx\label{eq:decayEstimate1}
    \\
    &=\int_\Omega A\nabla u \cdot \nabla v dx=\int_{\Delta_r}g_\mathcal{N} (v-v(x_0))\,d\sigma.\nonumber
\end{align}
Here we choose a point \(x_0\in \Delta_r\cap\mathcal{D}\), if \(\Delta_r\cap \mathcal{D}\neq\emptyset\), and as an arbitrary point in \(\Delta_r\), if \(\Delta_r\cap\mathcal{D}= \emptyset\).

Since \(v\) solves the PDE \(L^*v=0\) in \(T(\Delta_{2^{k-2}r})\) with zero Dirichlet, zero Neumann, or zero mixed data respectively on \(\Delta_{2^{k-2}r}\), we can apply Boundary H\"{o}lder (\refprop{prop:BoundaryHoelder}), Moser estimates (\refprop{prop:MoserEstimate}), and Poincar\'{e} to obtain
    \begin{align}
        \mathrm{osc}_{\Delta_r}v&\lesssim 2^{-\beta k}  \mathrm{osc}_{\Delta_{2^{k-4}r}}v\label{eq:decayEstimate2}
        \lesssim  2^{-\beta k} (2^kr)\fint_{T(\Delta_{2^{k-3}r})}|\nabla v|dx
        \\
        &\lesssim  2^{-\beta k} (2^kr)^{-n/2+1}\Big(\int_{T(\Delta_{2^{k-3}r})}|\nabla v|^2dx\Big)^{1/2}\nonumber
        \\
        &\lesssim 2^{-\beta k} (2^kr)^{-n/2+1}\Vert \nabla v\Vert_{L^2(\Omega)}
        \lesssim 2^{-\beta k} (2^kr)^{-n/2+1}.\nonumber
    \end{align}
Hence, we obtain in total
\begin{align*}
    \Big(\fint_{\Delta_{2r}}\tilde{N}_{r/4}(|\nabla u|)^qdx\Big)^{1/q}\lesssim 2^{-\beta k}(2^kr)^{-n/2}(2^kr)^{-n/2+1}\lesssim 2^{-\beta k}|\Sigma_k|^{-1}.
\end{align*}

An analogous argument as for the away annuli also yields \eqref{eq:DecayPropPartLargerR0}. The only point to be noted here is that \(|\Sigma_K|\approx (2^Kr)^{n-1}\approx R_0^{n-1}\approx 1\), since \(R_0\) is chosen as a fixed constant.
\end{proof}

\begin{prop}\label{prop:DecayEstimateAwaytruncated}
    Let \(u\) be a solution to \eqref{PDEwithZeroDirichletdata} with the Hardy atom \(g_\mathcal{N}\) supported on \(\Delta_r\) with \(r\leq R_0\). Then there exists \(\beta>0\) such that for all \(q\in (1,\min\{p,p_0/2\})\) we have
    \[\Big(\fint_{\Sigma_k}\tilde{N}^{2^{k-2}r}(|\nabla u|)^qdx\Big)^{1/q}\lesssim 2^{-\beta k}|\Sigma_k|^{-1}\quad \textrm{for all }0\leq k\leq K\]
    and
    \begin{align}\Vert \tilde{N}^{R_0/4}(\nabla u)\Vert_{L^q(\partial\Omega\setminus\Delta_{2^Kr})}\lesssim C.\label{eq:DecayPropPartLargerR0awaytruncated}\end{align}
\end{prop}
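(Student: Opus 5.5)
The plan is to reduce the away-truncated maximal function to interior $L^2$ averages of $\nabla u$ at the scale $2^k r$, and then reuse the duality/decay argument from the proof of \refprop{prop:DecayEstimateClosetruncated}. For $x\in\Sigma_k$, $k\geq 1$, every $y\in\Gamma(x)$ with $\delta(y)\geq 2^{k-2}r$ satisfies $|x-y|<\alpha\delta(y)$, and $\delta(y)\gtrsim 2^{k}r$. We split the cone points into two groups.

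\emph{Points with $\delta(y)\approx 2^kr$.} Here the ball $B_{\delta(y)/2}(y)$ lies in $T(\Delta_{C2^kr}(x))$ and has volume $\approx(2^kr)^n$. Hence
\[
\Big(\fint_{B_{\delta(y)/2}(y)}|\nabla u|^2\Big)^{1/2}\lesssim (2^kr)^{-n/2}\Big(\int_{T(\Delta_{C2^kr}(x))\cap\{\delta\gtrsim 2^kr\}}|\nabla u|^2\Big)^{1/2}.
\]

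\emph{Points with $\delta(y)\gg 2^kr$.} The ball $B_{\delta(y)/2}(y)$ then has radius $\approx\delta(y)=:\rho$ and sits in $T(\Delta_{C\rho}(x_0))$, where $x_0\in\Delta_r$ is the centre of the support of the atom. So the average is bounded by $\rho^{-n/2}\|\nabla u\|_{L^2(T(\Delta_{C\rho}(x_0)))}$, which is an annulus-type quantity at scale $\rho\geq 2^kr$.

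In both cases we estimate $\rho^{-n/2}\|\nabla u\|_{L^2(T(\Delta_{C\rho}))}$ for $\rho\gtrsim 2^kr$, restricted to the part of the region away from $\Delta_{2r}$ (that is, where $\delta\gtrsim\rho$). We use exactly the same duality as in \eqref{eq:decayEstimate1}. Choose $h\in L^2$ supported in $T(\Delta_{C\rho})\cap\{\delta\gtrsim\rho\}$ with $\|h\|_{L^2}=1$. Let $v$ solve the mixed problem for $L^*$ with right-hand side $\mathrm{div}(h)$, so that $\|\nabla v\|_{L^2}\lesssim1$. Then
\[
\int\nabla u\cdot h=\int_{\Delta_r} g_{\mathcal N}\,\big(v-v(x_0)\big)\,d\sigma .
\]
Since $h$ vanishes near $\partial\Omega$ within distance $\approx\rho$ of $\Delta_r$, $v$ is a solution in $T(\Delta_{c\rho}(x_0))$ with homogeneous mixed data. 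Boundary H\"older (\refprop{prop:BoundaryHoelder}), Moser (\refprop{prop:MoserEstimate}) and Poincar\'e, as in \eqref{eq:decayEstimate2}, then give
\[
\mathrm{osc}_{\Delta_r}v\lesssim (r/\rho)^\beta\rho^{1-n/2}.
\]
Combining this with $\|g_{\mathcal N}\|_{L^1}\le1$ yields
\[
\rho^{-n/2}\|\nabla u\|_{L^2}\lesssim (r/\rho)^\beta\rho^{1-n}\le 2^{-\beta k}(2^kr)^{1-n}.
\]
The bound is decreasing in $\rho$, so the supremum over the cone is controlled by the case $\rho\approx2^kr$. Thus $\tilde N^{2^{k-2}r}(\nabla u)(x)\lesssim 2^{-\beta k}|\Sigma_k|^{-1}$ pointwise on $\Sigma_k$, and the $L^q$ average inherits the bound.

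For $k=0$, the away-truncated function on $\Delta_{2r}$ only sees balls of radius $\gtrsim r$. The global energy bound $\|\nabla u\|_{L^2(\Omega)}\lesssim\|g_{\mathcal N}\|_{L^\infty}|\Delta_r|^{n/(2(n-1))}$ from the proof of \refprop{prop:DecayEstimateClosetruncated} gives the pointwise estimate $\lesssim r^{-n/2}\|\nabla u\|_{L^2}\lesssim|\Delta_r|^{-1}$. For \eqref{eq:DecayPropPartLargerR0awaytruncated}, the same argument applies with $\rho\approx R_0$. There the balls have size comparable to $R_0\approx1$, and we use that $\partial\Omega$ is bounded, so $\sigma(\partial\Omega)<\infty$ and the pointwise bound integrates to a constant.

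The main obstacle is justifying that $v$ satisfies homogeneous boundary conditions on a boundary ball around $x_0$ of size $\approx\rho$. This requires $\mathrm{supp}\,h$ to stay at distance $\gtrsim\rho$ from that ball, and we must check this for the case $\delta(y)\approx2^kr$ with $x$ near the inner edge of $\Sigma_k$. Since $|x-x_0|\geq2^kr$ and $\delta(y)\geq2^{k-2}r$, every point of $B_{\delta(y)/2}(y)$ is at distance $\geq2^{k-3}r$ from $\partial\Omega$. Taking the boundary-H\"older ball $\Delta_{2^{k-4}r}(x_0)$, as in \eqref{eq:decayEstimate2}, then keeps $T(\Delta_{2^{k-4}r}(x_0))$ disjoint from $\mathrm{supp}\,h$, which is what the argument needs.
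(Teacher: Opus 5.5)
Your proposal is correct and follows essentially the paper's argument. You bound the away-truncated averages on \(\Sigma_k\) pointwise by \((2^kr)^{-n/2}\) times an \(L^2\) norm of \(\nabla u\) over a region at distance \(\gtrsim 2^kr\) from \(\partial\Omega\). You then dualize against \(h\), solve the adjoint mixed problem for \(v\), and use boundary H\"older, Moser and Poincar\'e to get \(\mathrm{osc}_{\Delta_r}v\lesssim 2^{-\beta k}(2^kr)^{1-n/2}\).

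The only difference is your split into \(\rho\approx 2^kr\) and \(\rho\gg 2^kr\), which is harmless but unnecessary. Every relevant ball has volume \(\gtrsim(2^kr)^n\) and stays outside a Carleson box of size \(\approx 2^kr\) around \(\Delta_r\), so the paper uses a single dualization over the whole complement of that box.
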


\begin{proof}
For \(k\geq 0\), we have the pointwise bound
\begin{align*}
    \sup_{x\in\Sigma_k}\tilde{N}^{2^{k-2}r}(\nabla u)(x)&=\sup_{y\in\Gamma(x)\cap\{\delta(y)\geq 2^{k-2}r\}}\left(\fint_{B_{\delta(y)/2}(y)}|\nabla u|^2\right)^{1/2}
    \\
    &\lesssim (2^kr)^{-n/2}\Vert \nabla u\Vert_{L^2(\Omega\setminus T(\Delta_{2^{k-2}r}))}
    \lesssim (2^kr)^{-n/2}\int_\Omega \nabla u \cdot h dy
\end{align*}
for a dualising function \(h\in L^2(\Omega)\) with \(\Vert h\Vert_{L^2}=1\) and \(\supp(h)\subset \Omega\setminus T(\Delta_{2^{k-2}r})\). Now, we can proceed completely analogously to \eqref{eq:decayEstimate1} and \eqref{eq:decayEstimate2}, since all the required results, like the Moser estimates (\refprop{prop:MoserEstimate}), and Boundary H\"{o}lder continuity (\refprop{prop:BoundaryHoelder}), only rely on the fact that the distance of \(\Delta_r\) to the support of \(h\) is comparable to \(2^k\).
Hence we obtain
\[\Big(\fint_{\Sigma_k}\tilde{N}^{2^{k-2}r}(|\nabla u|)^qdx\Big)^{1/q}\lesssim  2^{-\beta k}(2^kr)^{-n/2}(2^kr)^{-n/2+1}\lesssim 2^{-\beta k}|\Sigma_k|^{-1}.\]
The same argument for \(k=K\) also yields \eqref{eq:DecayPropPartLargerR0awaytruncated}, since \(2^Kr\approx R_0\).
\end{proof}

\subsection{Proof of \texorpdfstring{\refthm{MainThm} \eqref{item:MainThmA} and  \eqref{item:MainThmC}}{}}

With the decay estimates on both, the away truncated and the close truncated, versions of the nontangential maximal function, we can, finally, obtain the \(L^1\) solvability of the mixed boundary value problem \eqref{PDEwithZeroDirichletdata}.

\begin{proof}[Proof of \refthm{MainThm} \eqref{item:MainThmA}]
Let \(g_\mathcal{N}\) be a Hardy atom supported on \(\Delta_r\). Choose \(q>1\) to be such that \reflemma{lemma:LocalNontangnetialEstimateMixed} applies. Then, we have
\begin{align*}
    \int_{\partial\Omega}\tilde{N}(\nabla u) d\sigma&=\sum_{k=0}^K\Big(\int_{\Sigma_k}\tilde{N}_{2^{k-2}r}(\nabla u) d\sigma + \int_{\Sigma_k}\tilde{N}^{2^{k-2}r}(\nabla u) d\sigma \Big)
    \\
    &\qquad + \Big(\int_{\partial\Omega\setminus\Delta_{R_0/2}}\tilde{N}_{R_0/4}(\nabla u) d\sigma + \int_{\partial\Omega\setminus\Delta_{R_0/2} }\tilde{N}^{R_0/4}(\nabla u) d\sigma \Big)
    \\
    &\lesssim\sum_{k=0}^K|\Sigma_k|\Big(\big(\fint_{\Sigma_k}\tilde{N}_{2^{k-2}r}(\nabla u)^q d\sigma\big)^{1/q} + \big(\fint_{\Sigma_k}\tilde{N}^{2^{k-2}r}(\nabla u)^q d\sigma\big)^{1/q} \Big)
    \\
    &\qquad + \big(\int_{\partial\Omega\setminus\Delta_{R_0/2}}\tilde{N}_{R_0/4}(\nabla u)^q d\sigma\big)^{1/q} + \big(\int_{\partial\Omega\setminus\Delta_{R_0/2} }\tilde{N}^{R_0/4}(\nabla u)^q d\sigma \big)^{1/q}
    \\
    &\lesssim \sum_{k=0}^K 2^{-\beta k}C + C\leq C.
\end{align*}
This finishes the proof.
\end{proof}

\begin{proof}[Proof of \refthm{MainThm} \eqref{item:MainThmC}]
    The proof of extending \(L^1\) solvability to \(L^q\) solvability follows the idea of \cite{dong_dirichlet-conormal_2022} (or also \cite{dong_dirichlet-conormal_2020}). We want to apply the following interpolation result:
    \begin{prop}
        Let \(\Delta_0\subset\partial\Omega\) be a boundary cube such that \(\mathrm{diam}(4\Delta_0)<R_0\), \(s,q\) be real numbers satisfying \(1<s<q\), and \(F,g\) two functions defined on \(4\Delta_0\). Suppose there exists \(C_0>0\) that for any surface cube \(\Delta\subset \Delta_0\), we can find \(F_\Delta\) and \(R_\Delta\) defined on \(\Delta\) such that
        \begin{enumerate}
            \item \(|F|\leq C_0(|F_\Delta|+|R_\Delta|)\quad \textrm{ on }\Delta\),
            \item \(\left(\fint_\Delta |R_\Delta|^q\right)^{1/q}\leq C_0\left(\fint_{4\Delta}|F| + \sup_{\Delta'\supset2\Delta}\big(\fint_{\Delta'}|g|^s\big)^{1/s}\right)\), and
            \item \(\fint_\Delta |F_\Delta|\leq C_0\sup_{\Delta'\supset2\Delta}\big(\fint_{\Delta'}|g|^s\big)^{1/s}\).
        \end{enumerate}
        Then for any \(\tilde{q}\in (s,q)\), if \(g\in L^{\tilde{q}}(4\Delta_0)\), we also have \(F\in L^{\tilde{q}}(\Delta_0)\) with
        \[\left(\fint_{\Delta_0}|F|^{\tilde{q}}\right)^{1/\tilde{q}}\leq C\left(\fint_{4\Delta_0}|F| + \big(\fint_{4\Delta_0}|g|^{\tilde{q}} \big)^{1/\tilde{q}}\right),\]
        where \(C=C(n,C_0,s,q,p)>0\) is some constant.
    \end{prop}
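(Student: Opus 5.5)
This is the real-variable interpolation (good-$\lambda$ / Calderón–Zygmund decomposition) lemma of Caffarelli–Peral / Shen type, in the form used in \cite{dong_dirichlet-conormal_2022}. The plan is to prove it by a level-set estimate for the Hardy–Littlewood maximal function on $\partial\Omega$, then integrate in $\lambda$. Since $\partial\Omega$ with surface measure is a space of homogeneous type (Ahlfors regular), dyadic surface cubes and a Calderón–Zygmund decomposition are available. Set
\[
\mathcal{M}F:=\mathcal{M}(|F|1_{4\Delta_0}),\qquad \mathcal{M}_s g:=\big(\mathcal{M}(|g|^s1_{4\Delta_0})\big)^{1/s}.
\]

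\textbf{Main step: a good-$\lambda$ inequality.} The goal is the following. There are constants $A>1$, $C$ such that for every small $\gamma>0$ and every $\lambda>\lambda_0:=C\fint_{4\Delta_0}|F|$,
\[
\big|\{x\in\Delta_0:\mathcal{M}F>A\lambda\}\big|\le C\Big(\frac{1}{A^q}+\frac{\gamma}{A}\Big)\big|\{x\in\Delta_0:\mathcal{M}F>\lambda\}\big|+\big|\{x\in\Delta_0:\mathcal{M}_sg>\gamma\lambda\}\big|.
\]
To prove it, perform a Calderón–Zygmund (stopping-time) decomposition of the open set $\{\mathcal{M}F>\lambda\}\cap\Delta_0$ into maximal dyadic cubes $\Delta$. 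Maximality gives $\fint_{4\Delta}|F|\lesssim\lambda$; the choice $\lambda>\lambda_0$ ensures that the cubes are proper subcubes of $\Delta_0$.

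Fix such a cube $\Delta$ and assume it contains a point where $\mathcal{M}_sg\le\gamma\lambda$; otherwise $\Delta$ is absorbed into the last term. That point lies in every $\Delta'\supset2\Delta$, so
\[
\sup_{\Delta'\supset2\Delta}\Big(\fint_{\Delta'}|g|^s\Big)^{1/s}\le\gamma\lambda .
\]
For $A$ large, points of $\Delta$ with $\mathcal{M}F>A\lambda$ satisfy $\mathcal{M}(|F|1_{2\Delta})>cA\lambda$, since averages over large balls are controlled by $\lambda$ through maximality. Now split using hypothesis 1, $|F|\le C_0(|F_\Delta|+|R_\Delta|)$:
\begin{itemize}
\item For $R_\Delta$, use weak $(q,q)$ of $\mathcal{M}$ together with hypothesis 2. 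This gives a bound $(A\lambda)^{-q}|\Delta|\big(\lambda+\gamma\lambda\big)^q\lesssim A^{-q}|\Delta|$.
\item For $F_\Delta$, use weak $(1,1)$ together with hypothesis 3. This gives $(A\lambda)^{-1}|\Delta|\gamma\lambda=\gamma A^{-1}|\Delta|$.
\end{itemize}
Summing over the disjoint cubes $\Delta$ yields the good-$\lambda$ inequality.

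\textbf{Integration in $\lambda$.} Multiply by $\tilde q\lambda^{\tilde q-1}$ and integrate from $\lambda_0$ to $\infty$; the range $\lambda\le\lambda_0$ contributes at most $\lambda_0^{\tilde q}|\Delta_0|$. This gives
\[
\|\mathcal{M}F\|_{L^{\tilde q}(\Delta_0)}^{\tilde q}\le CA^{\tilde q}\Big(\frac{1}{A^q}+\frac{\gamma}{A}\Big)\|\mathcal{M}F\|^{\tilde q}+C\gamma^{-\tilde q}\|\mathcal{M}_sg\|^{\tilde q}_{L^{\tilde q}}+C\lambda_0^{\tilde q}|\Delta_0| .
\]
Because $\tilde q<q$, choose $A$ large so that $CA^{\tilde q-q}<1/4$, and then $\gamma$ small so that $CA^{\tilde q-1}\gamma<1/4$. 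The first term can then be absorbed into the left-hand side.

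\textbf{Closing the argument.} Two points remain.
\begin{itemize}
\item The $g$-term is finite: since $\tilde q>s$, $\mathcal{M}$ is bounded on $L^{\tilde q/s}$, so $\|\mathcal{M}_sg\|_{L^{\tilde q}}\lesssim\|g\|_{L^{\tilde q}(4\Delta_0)}$.
\item Absorption requires a priori finiteness of $\|\mathcal{M}F\|_{L^{\tilde q}(\Delta_0)}$, which is not known. This is the main technical obstacle. I would handle it in the standard way, by running the argument with the truncation $\min(\mathcal{M}F,m)$, or equivalently integrating only $\lambda<m$. The good-$\lambda$ inequality survives truncation because the cubes are unchanged. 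Then let $m\to\infty$ by monotone convergence.
\end{itemize}
Finally $|F|\le\mathcal{M}F$ a.e. by Lebesgue differentiation, and normalizing by $|\Delta_0|$ gives the stated averaged inequality. The only delicate bookkeeping is locating cubes near $\partial\Delta_0$: the enlargements $4\Delta\subset4\Delta_0$ are what permit comparing averages with $\lambda$ there, and this is why both hypotheses and the conclusion use $4\Delta$ and $4\Delta_0$.
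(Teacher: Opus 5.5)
The paper states this proposition without proof. It takes it from the real-variable machinery of \cite{dong_dirichlet-conormal_2022}, which is Shen's good-$\lambda$ method, and your architecture is that method: stopping cubes for $\{\mathcal{M}F>\lambda\}$, weak $(1,1)$ for $F_\Delta$, weak $(q,q)$ for $R_\Delta$, then truncation and absorption. The gap is in the localization step. After reducing to $\mathcal{M}(|F|1_{2\Delta})>cA\lambda$ you split with hypothesis 1, but hypothesis 1 gives $|F|\le C_0(|F_\Delta|+|R_\Delta|)$ only on $\Delta$, not on $2\Delta$.

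If $\Delta$ does not touch $\partial\Delta_0$, this is repairable. Apply the hypotheses to the $3^{n-1}$ same-size neighbours $\Delta^{(j)}\subset\Delta_0$ covering $2\Delta$. Then $\fint_{4\Delta^{(j)}}|F|\lesssim\lambda$ by the same good point of $\hat\Delta$. Also, any $\Delta'\supset 2\Delta^{(j)}$ can be enlarged by a bounded factor to contain your point $z$, so the $g$-suprema are $\lesssim\gamma\lambda$.

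If instead $\Delta$ touches the relative boundary of $\Delta_0$, then $2\Delta$ meets $4\Delta_0\setminus\Delta_0$. There no hypothesis decomposes $F$, yet $\mathcal{M}(|F|1_{4\Delta_0})$ sees $F$ there, and you need that maximal function to get $\fint_{4\Delta}|F|\lesssim\lambda$. So the good-$\lambda$ inequality is unavailable for these cubes.

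This cannot be fixed by bookkeeping, because the proposition as printed is false. Take the following data:
\begin{itemize}
\item $n=2$, a flat piece of $\partial\Omega$ identified with $\mathbb{R}$, and $\Delta_0=(-1,1)$, rescaled so that $\mathrm{diam}(4\Delta_0)<R_0$;
\item $s=2$, $\tilde q=3$, $q=4$, and $g\equiv 1$;
\item $F=(1-x)^{-1/2}$ on $[0,1)$, $F=(x-1)^{-9/10}$ on $(1,4)$, and $F=1$ on $(-4,0)$.
\end{itemize}
Write $\Delta=(1-d-2\rho,1-d)\subset\Delta_0$; the $g$-supremum always equals $1$.

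If $d\le 2\rho$, set $F_\Delta=F1_{\{1-x<\rho^2\}}$ and $R_\Delta=F-F_\Delta$. Then $\fint_\Delta F_\Delta\le 1$ and $(\fint_\Delta R_\Delta^4)^{1/4}\le 2\rho^{-3/4}$. Meanwhile $4\Delta$ extends a length $3\rho-d\ge\rho$ past $x=1$, so $\fint_{4\Delta}F\ge\frac54\rho^{-9/10}\ge\frac54\rho^{-3/4}$.

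If $d>2\rho$, set $F_\Delta=0$ and $R_\Delta=F$. On $\Delta$ one has $F\le\max(1,d^{-1/2})$. If moreover $d<1$, then also $F\ge(2d)^{-1/2}$ on $\Delta$, whence $\fint_{4\Delta}F\ge\frac14(2d)^{-1/2}$.

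So hypotheses 1--3 hold with $C_0=6$, yet $\int_{\Delta_0}F^3=\infty$. The singularity just outside $\Delta_0$ inflates $\fint_{4\Delta}|F|$ for cubes touching $x=1$, and nothing in the hypotheses constrains $F$ there.

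A correct statement needs control of $F$ outside $\Delta_0$. One option is to assume the decomposition on $2\Delta$, as in Shen's original lemma. The other is to assume the hypotheses for all cubes $\Delta\subset 2\Delta_0$ and use the neighbour patch above. Either change is harmless for the application, since there $w$ and $v$ are defined for every surface cube. Under either version your argument closes. The remaining steps are fine: the choice of $\lambda_0$, the truncation, choosing $A$ and then $\gamma$, and the bound on $\mathcal{M}_s g$.
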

    As in section 6 of \cite{dong_dirichlet-conormal_2022} we choose
    \[F:=M[\tilde{N}(\nabla u)^{1/2}]^{2}, \quad F_\Delta:=M[\tilde{N}(\nabla w)^{1/2}]^{2}, \textrm{ and } \quad R_\Delta:=M[\tilde{N}(\nabla v)^{1/2}]^{2},\]
    where \(M\) is the Hardy-Littlewood maximal function, \(u\) the weak solution to \eqref{PDEwithZeroDirichletdata} with \(g_\mathcal{N}\in L^q(\mathcal{N}), 1<q<\min\{p,p_0/2\}\), and 
    \[\begin{cases}
    Lw=0& \textrm{ in }\Omega,
    \\
    A\nabla w\cdot \nu=g_\mathcal{N}1_{2\Delta} &\textrm{ on }\mathcal{N},
    \\
    w=0 &\textrm{ on }\mathcal{D},
    \\
    \tilde{N}(\nabla w)\in L^1(\partial\Omega)
\end{cases}\quad \textrm{and}\quad v:=u-w.\]

For the rest of the argument, we can follow the proof of Proposition 6.2 in \cite{dong_dirichlet-conormal_2022} almost verbatim, since the only results of the PDE that are used are \reflemma{lemma:LocalNontangnetialEstimatePureNeumann}, \reflemma{lemma:LocalNontangnetialEstimatePureDirichlet}, and \reflemma{lemma:LocalNontangnetialEstimateMixed}. To be able to apply \reflemma{lemma:LocalNontangnetialEstimateMixed}, we have to choose \(q\) between \(1\) and \(\min\{p,p_0/2\}\) and \(\EPS_0\) for \refass{ass:EPS0} sufficiently small as stated in \reflemma{lemma:LocalNontangnetialEstimateMixed}.

The only adaptation in contrast to the proof of Proposition 6.2 in \cite{dong_dirichlet-conormal_2022}, that we have to make, is, that all nontangential maximal function are replaced with mean-valued nontangential maximal functions. In particular in \((6.9)\) in \cite{dong_dirichlet-conormal_2022}, we have to replace the pointwise bound for \(y\in \Gamma_\alpha(x)\) with
\[\big(\fint_{B_{\delta(y)/2}(y)}|\nabla v|^2\big)^{1/4}\leq \fint_{\Delta_{c\alpha}(\tilde{y})}\tilde{N}(\nabla v)^{1/2},\]
where \(\tilde{y}\in \partial\Omega\) with \(|\tilde{y}-y|=\delta(y)\), which follows by the same arguments as in \cite{dong_dirichlet-conormal_2022}.
\end{proof}

\begin{rem}[Proof of \refcor{MainCorForSmallDKP} \eqref{item:MainCorA} and \eqref{item:MainCorC}]
    We would like to point out that for the proof of \refcor{MainCorForSmallDKP} \eqref{item:MainCorA} and \eqref{item:MainCorC}, we only use the small DPR condition, \refass{ass:smallDPR}, to apply \refprop{prop:SolvabilityofDRNforDKP} and conclude solvability of \((R)_p\) and \((N)_p\). Then \refthm{MainThm} \eqref{item:MainThmA} and \eqref{item:MainThmC} apply.
\end{rem}

\section{Uniqueness - Proof of \texorpdfstring{\refthm{MainThm} \eqref{item:MainThmB}}{}}\label{section:UniquenessProof}

Let \(u\in W^{1,2}_{\text{loc}}(\Omega)\) be a solution to
\begin{align}\begin{cases}
    Lu=0& \textrm{ in }\Omega,
    \\
    A\nabla u\cdot \nu=0 &\textrm{ on }\mathcal{N},
    \\
    u=0 &\textrm{ on }\mathcal{D},
    \\
    \tilde{N}_\alpha(\nabla u)\in L^1(\partial\Omega).
\end{cases}\label{eq:MixedPDEZeroData}
\end{align}

To prove uniqueness of a solution to the \(L^1\) mixed boundary value problem, it suffices to prove that the only solution \(u\) to \eqref{eq:MixedPDEZeroData} is \(u\equiv 0\). For that, we are going to show that
\begin{align}\int_Qu(x)dx=0\quad \textrm{ for all cubes }Q\subset \Omega.\label{eq:WTSIntuoverQ}\end{align}
First, let us fix a cube \(Q\subset\Omega\). By Lax-Milgram, let \(v\in W^{1,2}(\Omega)\) be the solution to 
\begin{align}\begin{cases}
    L^*v=1_Q& \textrm{ in }\Omega,
    \\
    A^*\nabla v\cdot \nu=0 &\textrm{ on }\mathcal{N},
    \\
    v=0 &\textrm{ on }\mathcal{D},
\end{cases}\label{eq:PDEforv}
\end{align}
where \(1_Q\) is the characteristic function on \(Q\). Since \(u\) is merely in \(W^{1,2}_{\text{loc}}\) apriori, the regularity of \(u\) is not sufficient for integration against the Poisson data of \eqref{eq:PDEforv} to use integration by parts methods. Thus, we regularize \(u\) by the small parameter \(t>0\). For that, we introduce \(\eta=\eta_t\in C^\infty(\Omega)\) such that
\begin{enumerate}[(a)]
    \item \(\eta\equiv 1\) on \(\Omega\setminus \tilde{\Omega}_{\mathcal{D},\alpha/2}^{2t}\), where \(\tilde{\Omega}_{\mathcal{D},\alpha/2}^{2t}:=\bigcup_{x\in \mathcal{D}}\Gamma^{2t}_{\alpha/2}(x)\) is the sawtooth region over \(\mathcal{D}\) truncated at height \(t\);
    \item \(\eta\equiv 0\) on \(\tilde{\Omega}_{\mathcal{D},\alpha/4}^t\);
    \item we set \(\Lambda^t:=\Omega\cap\{x\in\mathrm{supp}(\eta):\tilde{\delta}(x)< 3t\}\) and for \(x\in \supp(\eta)\):\label{enumerate:DerivativeOfEta}
    \begin{itemize}
        \item \(|\nabla \eta(x)|\lesssim \frac{1}{t}\) if \(x\notin\Lambda^t\)
        \item \(\delta(x)\approx \tilde{\delta}(x)\) and \(|\nabla \eta(x)|\lesssim \tilde{\delta}(x)^{-1}\), if \(x\in\Lambda^t\).
    \end{itemize}
\end{enumerate}
From above definition it follows that \(\eta\equiv 0\) on \(\mathcal{D}\) and \(\eta\equiv 1\) on \(\mathcal{N}\), and that \(\eta_h\uparrow 1_\Omega\) pointwise for \(h\to 0\). This special form of the cut-off \(\eta\) is motivated by Lemma 5.1 in \cite{dong_dirichlet-conormal_2022}.
\medskip

Now, we introduce a partition of unity by a family of smooth cut-off functions \(0\leq \chi_k\leq 1\) such that 
\begin{itemize}
    \item \(1_\Omega\equiv \sum_{k=0}^N\chi_k\);
    \item \(\mathrm{dist}(\mathrm{supp}(\chi_0), \partial\Omega)>0\) and \(\chi_0|_{(1+\rho)Q}\equiv 1\) for some small enlargement of \(Q\) given by \(\rho>0\);
    \item and for \(k=1,...,N\) there is a Lipschitz function \(\psi_k\) such that \(\mathrm{supp}(\chi_k)\) is equal to \(\{x_n>\psi_k(x_1,..,x_{n-1})\}\) after rotation.
\end{itemize}
In the following, for a point \(x\in \mathrm{supp}(\chi_k)\), \(k\geq 1\), let us denote the projection of the point onto the boundary as \(\tilde{x}:=(x_1,...,x_{n-1}, \psi_k(x_1,...,x_{n-1}))\). Since there are only finitely many \(\chi_k\), we also have \(|\nabla \chi_k|\lesssim 1\).

Now, let us fix one of these cut-off functions, i.e., \(k\geq 1\). For small \(h>0\), in the coordinates of \(\supp(\chi_k)\), we can define a translation away from the boundary by
\[\tau^{(k)}_h(x)=(x_1,...,x_{n-1},x_n+h).\]
Finally, we would like to consider \(w\) as the regularization of \(u\) by  \(h\) on \(\mathrm{supp}(\chi_k)\), where \(w\) is given by
\[w:=w_h^{(k)}:= \fint_h^{2h}\big((u\chi_k)\circ\tau_t^{(k)}\big)\eta_t\,dt\in W^{1,2}(\Omega).\]

We note that \(w|_\mathcal{D}=0\). Now, 
using  \eqref{eq:PDEforv},
\begin{align}
    \int_Q w_h^{(k)}1_Q dx=-\int_\Omega A\nabla w^{(k)}_h\cdot \nabla v dx + \int_{\partial\Omega}w_h^kA^*\nabla v\cdot \nu dx.\label{eq:WTSwithWinsteadofu}
\end{align}
Since \(A^*\nabla v\cdot \nu=0\) on \(\mathcal{N}\) and \(w^{(k)}_h=0\) on \(\mathcal{D}\), the boundary term vanishes, and we can continue with
\begin{align}
    &= -\int_\Omega\fint_h^{2h} A\nabla ((\chi_k\circ\tau^{(k)}_t) \eta_t) \cdot\nabla v\,(u\circ\tau_t^{(k)}) + A\nabla (u\circ\tau_t^{(k)}) \cdot\nabla v\,(\chi_k\circ\tau^{(k)}_t) \eta_t \,dtdx\nonumber
    \\
    &= -\int_\Omega\fint_h^{2h} A\nabla ((\chi_k\circ\tau^{(k)}_t) \eta_t) \cdot\nabla v\,(u\circ\tau_t^{(k)})  +(A-A\circ\tau_t^{(k)})\nabla (u\circ\tau_t^{(k)}) \cdot\nabla v\,(\chi_k\circ\tau^{(k)}_t) \eta_t\,dtdx\nonumber
    \\
    &\qquad  -\int_\Omega\fint_h^{2h} A\circ\tau_t^{(k)}\nabla (u\circ\tau_t^{(k)}) \cdot\nabla v\,(\chi_k\circ\tau^{(k)}_t) \eta_t \,dtdx\label{eq:Eqforfg_kWithoutNablav}
    \\
    &=
    -\fint_h^{2h}\int_\Omega F_t^{(k)} \cdot\nabla v \,dx\,dt + \fint_h^{2h}\int_\Omega f_t^{(k)} v \,dx\,dt\nonumber
    \\
    &\qquad - \int_\mathcal{N}\fint_h^{2h}A\circ\tau_t^{(k)}\nabla (u\circ\tau_t^{(k)})\cdot \nu \,(\chi_k\circ\tau^{(k)}_t)\eta_t v\,dt d\sigma \nonumber
    \\
    &=:\fint_{h}^{2h}(-{\rm I}_t^{(k)}+{\rm II}_t^{(k)})dt-{\rm III}_h^{(k)},\nonumber
        \end{align}
where we used integration by parts on the term in \eqref{eq:Eqforfg_kWithoutNablav} to move the gradient away from \(v\). Hence, the functions \(F_t^{(k)}\) and \(f_t^{(k)}\) are given by
\begin{align*}F^{(k)}_t:=A\nabla (\chi_k\circ\tau_t^{(k)}) \eta_t (u\circ\tau_t^{(k)})
+ A\nabla \eta_t (\chi_k\circ\tau_t^{(k)})(u\circ\tau_t^{(k)})
\\
+(A-A\circ\tau_t^{(k)})\nabla(u\circ\tau_t^{(k)}) (\chi_k\circ\tau_t^{(k)})\eta_t
\end{align*}
and
\begin{align*}
    f_t^{(k)}:=(A\circ\tau_t^{(k)})\nabla (u\circ\tau_t^{(k)})\cdot \nabla(\chi_k\circ\tau^{(k)}_t) \eta_t + (A\circ\tau_t^{(k)})\nabla (u\circ\tau_t^{(k)})\cdot \nabla\eta_t(\chi_k\circ\tau^{(k)}_t)
    \\
    + \mathrm{div}((A\circ\tau_t^{(k)})\nabla(u\circ\tau_t^{(k)})) (\chi_k\circ \tau_t^{(k)})\eta_t. 
\end{align*}

Now, we claim that for \(h\to 0\) we get
\begin{align}
    {\rm I}={\rm I}^{(k)}_h&\to \int_\Omega A\nabla\chi_k\cdot \nabla v u \,dx,\label{eq:I}
    \\
    {\rm II}={\rm II}^{(k)}_h&\to \int_\Omega A\nabla u\cdot \nabla\chi_k v \,dx,\label{eq:II}
    \\
    {\rm III}={\rm III}^{(k)}_{h}&\to 0\label{eq:III}
\end{align}
We postpone the proof of \eqref{eq:I}, \eqref{eq:II}, and \eqref{eq:III} and show first how the proof of \eqref{eq:WTSIntuoverQ} is completed.

For that, we show first by the dominated convergence theorem that
\begin{align}\int_Q u\chi_k dx=\lim_{h\to 0}\int_Q w^{(k)}_hdx\quad\textrm{  for every }k\geq 1.\label{eq:IntOveruConvtoIntOverw}\end{align}

We note that for \(x\in \supp(\chi_k)\) and by the fundamental theorem of calculus we have
\begin{align}
    u\circ\tau_h^{(k)}(x)\lesssim (\delta(x)+h)\tilde{N}(\nabla u)(\tilde{x})+u(\tilde{x}).\label{eq:UpointwiseBoudnByN(nablau)}
\end{align} 
With this at hand, we can bound 
\[w_h^{(k)}(x)=\fint_h^{2h}(u\chi_k)\circ\tau_t^{(k)}\eta_t dt\lesssim (\delta(x)+h)\tilde{N}(\nabla u)(\tilde{x}) +u(\tilde{x})\]
pointwise, and note that \((\delta(x)+h)\tilde{N}(\nabla u)(\tilde{x}) +u(\tilde{x})\) is integrable. This integrability follows by the Poincar\'{e} inequality and the assumption \(\Vert\tilde{N}(\nabla u)\Vert_{L^1(\partial\Omega)}<\infty\), since
\begin{align*}
    &\int_{\supp(\chi_k)}(\delta(x)+h)\tilde{N}(\nabla u)(\tilde{x}) +u(\tilde{x})dx\lesssim \Vert \tilde{N}(\nabla u)\Vert_{L^1(\partial\Omega)} +\Vert u\Vert_{L^1(\partial\Omega)}
    \\
    &\qquad\lesssim \Vert \tilde{N}(\nabla u)\Vert_{L^1(\partial\Omega)} +\Vert \nabla u\Vert_{L^1(\mathcal{N})}\lesssim \Vert \tilde{N}(\nabla u)\Vert_{L^1(\partial\Omega)}<\infty.
\end{align*}
This finishes the proof of \eqref{eq:IntOveruConvtoIntOverw}. For \(k=0\), we note that
the support of \(\chi_0\) is a compact subset of \(\Omega\), which yields by integration by parts and \eqref{eq:PDEforv} that
\begin{align*}
\int_\Omega u\chi_0 1_Qdx=\int_\Omega A\nabla u\cdot \nabla\chi_0 v {\color{blue}-} A\nabla\chi_0\cdot \nabla v u \,dx.
\end{align*}
Thus, we have with \eqref{eq:I}, \eqref{eq:II}, \eqref{eq:III}, and \eqref{eq:IntOveruConvtoIntOverw}
\begin{align*}
    \int_\Omega u1_Qdx&=\int_Q u\chi_0dx +\sum_{k=1}^N\int_Q u\chi_kdx=\int_Qu\chi_0dx +\lim_{h\to 0}\sum_{k=1}^N\int_Q w^{(k)}_hdx
    \\
    &=\sum_{k=0}^N\int_\Omega A\nabla u\cdot \nabla\chi_k v - A\nabla\chi_k\cdot \nabla v u \,dx=0.
\end{align*}
The last equality used that \(\nabla \big(\sum_{k=0}^N\chi_k)=\nabla (1_\Omega)=0\), and hence it only remains to prove \eqref{eq:I}, \eqref{eq:II}, and \eqref{eq:II} in the following three steps to complete the proof.

\medskip

Before we turn to the proof of \eqref{eq:I}, we would like to make an improvement of observation \eqref{eq:UpointwiseBoudnByN(nablau)}. By the fundamental theorem of calculus, we can also see that there exists a \(c>0\) such that for all \(\tilde{z}\in \Delta(\tilde{x},c\delta(x))\)
\begin{align}
    u\circ\tau_h^{(k)}(x)\lesssim (\delta(x)+h)\tilde{N}(\nabla u)(\tilde{z})+u(\tilde{z}),\label{eq:UpointwiseBoudnByN(nablau)2}
\end{align} 
and hence by the Cacciopolli inequality, we even obtain
\begin{align}
    \Big(\fint_{B_{\delta(x)/2}(x)}|\nabla u\circ\tau_h^{(k)}|^2 dy\Big)^{1/2}&\lesssim \frac{1}{\delta(x)}\Big(\fint_{B_{3\delta(x)/4}(x)}|(u\circ\tau_h^{(k)})(y)-u(\tilde{z})|^2 dy\Big)^{1/2}\nonumber
    \\
    &\lesssim \frac{\delta(x)+h}{\delta(x)}\tilde{N}(\nabla u)(\tilde{z})\lesssim \tilde{N}(\nabla u)(\tilde{z}),\label{eq:NablaUpointwiseBoudnByN(nablau)2}
\end{align}
for \(\tilde{z}\in\Delta(\tilde{x},c\delta(x))\) and if \(\delta(x)\geq h\) and \(B_{3\delta(x)/4}(x)\subset\supp(\chi_k)\).
\smallskip

Furthermore, we will also need a Whitney decomposition of the domain \(\Omega\) at multiple steps throughout the rest of the proof. Let \((W_i)_i\) be this Whitney cube collection with the following properties: First, \(l(W_i)\approx\mathrm{dist}(W_i,\partial\Omega)\). Second, we require the union of the \(W_i\) to cover \(\Omega\) in such a way that the enlarged cubes \(\frac{3}{2}W_i\) have finite overlap, i.e., there exists a fixed \(N\in\mathbb{N}\) such that \(\chi_\Omega\leq \sum_i\chi_{W_i}\leq \sum_i\chi_{\frac{3}{2}W_i}\leq N\chi_\Omega\). Without loss of generality, we can also assume that each Whitney cube is chosen such that for all \(y\in W_i\), we have \(\frac{3}{2}W_i\subset B_{\delta(y)/2}(y)\). Now, we are in the position to prove \eqref{eq:I},  \eqref{eq:II}, and  \eqref{eq:III}.
\medskip

\underline{\textbf{Step 1: Proof of \eqref{eq:I}:}}
To begin with, we split
\begin{align*}
    {\rm I}&=\int_\Omega F_t^{(k)}\cdot \nabla v dx=\int_\Omega (A-A\circ\tau_t^{(k)})\nabla(u\circ\tau_t^{(k)}) \cdot \nabla v (\chi_k\circ\tau_t^{(k)})\eta_t dx
    \\
    &\qquad + \int_\Omega A\nabla (\chi_k\circ\tau_t^{(k)})\cdot \nabla v \eta_t (u\circ\tau_t^{(k)})dx
    + \int_\Omega A\nabla \eta_t \cdot\nabla v(\chi_k\circ\tau_t^{(k)})(u\circ\tau_t^{(k)}) dx
    \\
    &=:{\rm I}_1+{\rm I}_2+{\rm I}_3.
\end{align*}

Next, we show that \({\rm I}_1\to 0\) as \(t\to 0\). First, we note that \(\mathrm{supp}(\chi_k\circ\tau_t^{(k)})\subset\mathrm{supp}(\chi_k)\). Next, from Lemma 3.14 in \cite{milakis_harmonic_2013} we obtain
\begin{align*}
    |{\rm I}_1|&\leq \int_{\mathrm{supp}(\chi_k\circ\tau_t^{(k)})}  |A-A\circ\tau_t^{(k)}||\nabla(u\circ\tau_t^{(k)})| |\nabla v| dx
    \\
    &\lesssim \int_{\partial\Omega}\Big(\int_{\Gamma(x)\cap \mathrm{supp}(\chi_k)} |A-A\circ\tau_t^{(k)}| |\nabla(u\circ\tau_t^{(k)})| |\nabla v| \delta^{1-n}dy\Big)d\sigma(x).
\end{align*}
Assuming that \(t\) is small, we fix \(x\in \partial\Omega\) and let us call \(i_0,i_1\in\mathbb{Z}\) the smallest or largest integer, respectively, such that \(2^{-i_0}\leq t\leq \mathrm{diam}(\supp(\chi_k))\leq 2^{-i_1}\).
Then we obtain by Hölder's inequality and the fundamental theorem of calculus for each cone
\begin{align}
    &\int_{\Gamma(x)\cap \mathrm{supp}(\chi_k)}|A-A\circ\tau_t^{(k)}| |\nabla(u\circ\tau_t^{(k)})| |\nabla v| \delta^{1-n}dy\nonumber
    \\
    &\lesssim \sum_{i:W_i\cap\Gamma(x)\cap \mathrm{supp}(\chi_k)\neq \emptyset}l(W_i)t\fint_{W_i}|\partial_{x_n} A||\nabla(u\circ\tau_t^{(k)})| |\nabla v| dy\label{eq:UniquenessDKPneededInsteadOfDPR}
    \\
    &\leq  \sum_{i:W_i\cap\Gamma(x)\cap \mathrm{supp}(\chi_k)\neq \emptyset}l(W_i)t(\mathrm{sup}_{y\in W_i}|\nabla A|)\fint_{W_i}|\nabla(u\circ\tau_t^{(k)})| |\nabla v| dy\nonumber
    \\
    &\leq \sum_{i:W_i\cap\Gamma(x)\cap \mathrm{supp}(\chi_k)\neq \emptyset}t\Big(\frac{1}{l(W_i)^{n-1}}\int_{W_i}\mathrm{sup}_{B_{\delta(y)/2}(y)}|\nabla A|^2\delta dy \Big)^{1/2}\nonumber
    \\
    &\hspace{50mm}\cdot\Big(\fint_{W_i} |\nabla v|^2dy\Big)^{1/2}\Big(\fint_{W_i}|\nabla(u\circ\tau_t^{(k)})|^2dy\Big)^{1/2}.\nonumber
\end{align}
We can continue with the Cacciopolli inequality, the DKP condition \eqref{eq:largeDKPcondition}, and \eqref{eq:NablaUpointwiseBoudnByN(nablau)2} to get
\begin{align*}
    &\lesssim \sum_{i:W_i\cap\Gamma(x)\cap \mathrm{supp}(\chi_k)\neq \emptyset}t\Big(\fint_{\frac{3}{2}W_i} |v-v(x)|^2 \delta^{-2}dy\Big)^{1/2}\tilde{N}(\nabla u)(x)
    \\
    &\lesssim \sum_{i:W_i\cap\Gamma(x)\cap \mathrm{supp}(\chi_k)\neq \emptyset}tl(W_i)^{-1}\Big(\fint_{\frac{3}{2}W_i} |v-v(x)|^2 dy\Big)^{1/2}\tilde{N}(\nabla u)(x).
\end{align*}
By Boundary Hölder (\refprop{prop:BoundaryHoelder}) we obtain \(|v(y)-v(x)|\lesssim \delta(y)^\beta\approx l(W_i)^{\beta}\) for every \(y\in W_i\), and hence integrating in \(x\) and using the finite overlap of the Whitney cubes gives
\begin{align*}
    |{\rm I}_1|&\lesssim t\int_{\partial\Omega}\sum_{i:W_i\cap\Gamma(x)\cap \mathrm{supp}(\chi_k)\neq \emptyset} l(W_i)^{-1+\beta}\tilde{N}(\nabla u) d\sigma(x) 
    \\
    &\lesssim t\int_{\partial\Omega}\sum_{i=i_1}^{i_0}2^{(1-\beta) i}\tilde{N}(\nabla u) d\sigma
    \lesssim t^\beta\Vert \tilde{N}(\nabla u)\Vert_{L^1(\partial\Omega)}.
\end{align*}
Here, we used the geometric sum formula for \(\sum_{i=i_1}^{i_0}2^{(1-\beta) i}
\approx 2^{(1-\beta)i_0}\approx t^{-1+\beta}\). This bound of \({\rm I}_1\) converges to \(0\) as \(t\) tends to \(0\).

Next, we show \({\rm I}_3\to 0\) as \(t\to 0\). Recall that \(\Lambda^t:=\Omega\cap\{x\in\mathrm{supp}(\eta_t):\tilde{\delta}(x)< 3t\}\subset\mathrm{supp}(\eta_t)\) is the part of the support of \(\nabla \eta_t\) that is close to the interface. Then, we can split the integral \({\rm I}_3\) into
\begin{align*}
    |{\rm I}_3|&\leq \int_{\mathrm{supp}(\chi_k)\cap\supp(\nabla \eta_t)\setminus \Lambda^t} t^{-1}| \nabla v| |u\circ\tau_t^{(k)}| dx
    \\
    &\qquad+ \int_{\mathrm{supp}(\chi_k)\cap\Lambda^t} \tilde{\delta}^{-1}|\nabla v| |u\circ\tau_t^{(k)}| dx=:{\rm I}_{31}+{\rm I}_{32}.
\end{align*}
For the first term, \({\rm I}_{31}\), we cover \(\supp(\chi_k)\cap\supp(\nabla \eta_t)\setminus \Lambda^t\) by Whitney cubes, whose size is comparable to \(t\) via definition. Hence we obtain by Hölder's inequality
\begin{align*}
    {{\rm I}}_{31}\leq \sum_{i:W_i\cap(\supp(\chi_k)\cap\supp(\nabla \eta_t)\setminus \Lambda^t)\neq \emptyset} t^{-1}\big(\int_{W_i} |\nabla v|^2dy\big)^{1/2} \big(\int_{W_i} |u\circ\tau_t^{(k)}|^2dy\big)^{1/2}.
\end{align*}
For each cube \(W_i\) with \(W_i\cap\supp(\chi_k)\neq \emptyset\), let us define its projection onto the boundary by \(\tilde{W_i}:=\{z\in \partial\Omega; \exists x\in W_i:\tilde{x}=z\}\). Since for the cubes \(W_i\) covering \(\supp(\chi_k)\cap\supp(\eta_t)\setminus \Lambda^t\), we have \(\tilde{W}_i\subset\mathcal{D}\), it follows that \(u(\tilde{y})=0\) for \(y\in \mathcal{Q}\). Hence, we obtain by \eqref{eq:UpointwiseBoudnByN(nablau)2}, the Cacciopolli inequality and Boundary Hölder (\refprop{prop:BoundaryHoelder})
\begin{align*}
    {\rm I}_{31}&\leq \sum_{i:W_i\cap(\supp(\chi_k)\cap\supp(\nabla \eta_t)\setminus \Lambda^t)\neq \emptyset} t^{-2}\big(\int_{\frac{3}{2}W_i} | v|^2dy\big)^{1/2} \Big(t^{-n/2+2}\int_{\tilde{W}_i} |\tilde{N}(\nabla u)|d\sigma\Big)
    \\
    &\lesssim t^{\beta}\sum_{i:W_i\cap(\supp(\chi_k)\cap\supp(\nabla \eta_t)\setminus \Lambda^t)\neq \emptyset} \int_{\tilde{W}_i} |\tilde{N}(\nabla u)|d\sigma \lesssim t^\beta\Vert\tilde{N}(\nabla u)\Vert_{L^1(\partial\Omega)}.
\end{align*}
Hence \({\rm I}_{31}\) converges to \(0\) if \(t\) goes to \(0\). 

For \({\rm I}_{32}\) we note that, we have \(\tilde{\delta}\approx \delta\) on \(\Lambda^t\cap\supp(\chi_k)\) according to \eqref{enumerate:DerivativeOfEta}. Furthermore, we can find a finite amount of points \((X_j)_j\subset\Lambda\) in the interface with \(|X_{j_1}-X_{j_2}|\geq t/2\), if \(j_1\neq j_2\), such that \(\bigcup_j T(\Delta_{3t}(X_j))\) covers  \(\Lambda^t\cap \supp(\chi_k)\). To abbreviate notation let us denote by \((W_i^j)_i\) a sub collection of the collection of all Whitney cubes with the property that \(W_i^j\cap T(\Delta_{3t}(X_j))\cap(\Lambda^t\cap\supp(\chi_k))\neq \emptyset\). Due to \refass{ass:corkscrew} and the choice of the aperture in the definition of \(\eta_t\), we can choose \(\tilde{x}_j\in \mathcal{D}\) and a constant \(c>0\) such that \(\Delta_{ct}(\tilde{x}_j)\subset \mathcal{D}\cap\Delta_{3t}(X_j)\) and \(\tau_t^{(k)}(T(\Delta_{3t}(X_j))\cap\supp(\nabla\eta_t))\subset \Gamma(\tilde{y})\) for every \(\tilde{y}\in \Delta_{ct}(\tilde{x}_j)\). Hence, we can improve the pointwise bound \eqref{eq:UpointwiseBoudnByN(nablau)2} to the bound \begin{align}\big(\fint_{W_i^j} |u\circ\tau_t^{(k)}|^2dy\big)^{1/2}\lesssim t\fint_{\Delta_{c t}(\tilde{x}_j)}\tilde{N}(\nabla u) dy\label{eq:UpointwiseBoudnByN(nablau)3},\end{align}
where the implicit aperture of the nontangential maximal function is \(\alpha\).

With these preliminary observations at hand, we have by Cacciopolli and Boundary H\"{o}lder (\refprop{prop:BoundaryHoelder})
\begin{align*}
    |{\rm I}_{32}|&\lesssim\sum_j\int_{T(\Delta_{3t}(X_j))\cap\supp(\nabla \eta_t)}\delta^{-1}|\nabla v| |u\circ\tau_t^{(k)}|dy
    \\
    &\lesssim\sum_j\sum_i|W_i^j|\big(\fint_{W^j_i}\delta^{-2}|\nabla v|^2 dy\big)^{1/2} \big(\fint_{W^j_i}|u\circ\tau_t^{(k)}|^2 dy\big)^{1/2}
    \\
    &\lesssim\sum_j\sum_i|W_i^j|\big(\fint_{\frac{3}{2}W^j_i}\delta^{-4}|v-(v)_{W^j_i}|^2 dy\big)^{1/2} \big(t\fint_{\Delta_{c t}(\tilde{x}_j)}\tilde{N}(\nabla u) dy\big)
    \\
    &\lesssim\sum_j \Big(\sum_i \int_{\frac{3}{2}W^j_i}\tilde{\delta}^{-2+\beta}(y)dy\Big)\Big(t^{-n+2}\int_{\Delta_{c t}(\tilde{x}_j)}\tilde{N}(\nabla u) dy\Big)
    \\
    &\lesssim\sum_j \Big(\int_{T(\Delta_{5t}(X_j))}\tilde{\delta}^{-2+\beta}(y)dy\Big)\Big(t^{-n+2}\int_{\Delta_{c t}(\tilde{x}_j)}\tilde{N}(\nabla u) dy\Big) .
\end{align*}
By \refass{ass:EPS0} and if \(\EPS_0\leq \beta\), we can continue with
\begin{align*}
    \lesssim t^{-n+2}\Vert\tilde{N}(\nabla u)\Vert_{L^1(\partial\Omega)}t^{n-2+\beta}=t^\beta\Vert\tilde{N}(\nabla u)\Vert_{L^1(\partial\Omega)}.
\end{align*}
Hence, we get that \(|{\rm I}_{32}|\to 0\) as \(t\to 0\).

Lastly, for \({\rm I}_2\), we want to use the dominated convergence theorem, since it is clear that the integrand of \({\rm I}_2\) converges pointwise almost everywhere to the integrand of the right-hand side of \eqref{eq:I}. For that, we note that \(|\nabla \chi_k\circ\tau_t^k|\lesssim 1\). Furthermore, by \eqref{eq:UpointwiseBoudnByN(nablau)2}, if \(W_i\cap\mathrm{supp}(\chi_k)\neq \emptyset\), then we have 
\[u\circ\tau_t^{(k)}(x)\lesssim\inf_{z\in W_i} (\delta(x)+t)\tilde{N}(\nabla u)(\tilde{z})+u(\tilde{z})=\inf_{\tilde{z}\in \tilde{W}_i} (\delta(x)+t)\tilde{N}(\nabla u)(\tilde{z})+u(\tilde{z})\]
for every \(x\in W_i\). Hence the integrand of \(I_2\) is pointwise bounded by the function
\[x\mapsto \sum_{i:W_i\cap\mathrm{supp}(\chi_k)\neq \emptyset}C|\nabla v(x)|1_{W_i}(x) \inf_{z\in W_i} \Big((\delta(x)+t)\tilde{N}(\nabla u)(\tilde{z})+u(\tilde{z})\Big).\]
This function is integrable, since by the Cacciopolli inequality and Boundary H\"{o}lder (\refprop{prop:BoundaryHoelder}),
\begin{align*}
&\int_{\supp(\chi_k)} \sum_{i:W_i\cap\mathrm{supp}(\chi_k)\neq \emptyset}C|\nabla v(x)|1_{W_i}(x) \inf_{z\in W_i} \Big((\delta(x)+t)\tilde{N}(\nabla u)(\tilde{z})+u(\tilde{z})\Big) dx
\\
&\lesssim \sum_{i:W_i\cap\mathrm{supp}(\chi_k)\neq \emptyset}\Big(\int_{W_i}|\nabla v|^2\Big)^{1/2} \Big(\int_{W_i}\inf_{z\in W_i} \Big((\delta(x)+t)\tilde{N}(\nabla u)(\tilde{z})+u(\tilde{z})\Big)^2dx\Big)^{1/2}
\\
&\lesssim \sum_{i:W_i\cap\mathrm{supp}(\chi_k)\neq \emptyset}\Big(\int_{\frac{3}{2}W_i}\frac{|v-(v)_{W_i}|^2}{\delta^2}\Big)^{1/2} l(W_i)^{-n/2+1}\Big(\int_{\tilde{W}_i}((\delta(x)+t)\tilde{N}(\nabla u)+u) \, d\sigma\Big)
\\
&\lesssim \sum_{i:W_i\cap\mathrm{supp}(\chi_k)\neq \emptyset}l(W_i)^\beta\Big(\int_{\tilde{W}_i}((\delta(x)+t)\tilde{N}(\nabla u)+u) \, d\sigma\Big)
\\
&\lesssim \Vert\tilde{N}(\nabla u)\Vert_{L^1(\partial\Omega)} + \Vert u\Vert_{L^1(\partial\Omega)}<\infty.
\end{align*}
The last inequality follows by the Poincar\'{e} inequality from 
\[\Vert u \Vert_{L^1(\partial\Omega)}\lesssim \Vert \nabla u \Vert_{L^1(\mathcal{N})}\lesssim \Vert\tilde{N}(\nabla u)\Vert_{L^1(\partial\Omega)}.\] 
This finishes the proof of \eqref{eq:I}.
\medskip

\underline{\textbf{Step 2: Proof of \eqref{eq:II}:}} We split the integral into
\begin{align*}
    {\rm II}&=\int_\Omega (A\circ\tau_t^{(k)})\nabla (u\circ\tau_t^{(k)})\cdot \nabla(\chi_k\circ\tau^{(k)}_t) \eta_t v dx
    \\
    &\qquad + \int_\Omega (A\circ\tau_t^{(k)})\nabla (u\circ\tau_t^{(k)})\cdot \nabla\eta_t(\chi_k\circ\tau^{(k)}_t) v dx
    \\
    &\qquad + \int_\Omega\mathrm{div}((A\circ\tau_t^{(k)})\nabla(u\circ\tau_t^{(k)})) (\chi_k\circ \tau)\eta_t v dx=:{\rm II}_1+{\rm II}_2+{\rm II}_3.
\end{align*}
First, \({\rm II}_3=0\) by use of the PDE for \(u\). Next, we show that \({\rm II}_2\to 0\) as \(t\to 0\). The proof is similar to the proof for \({\rm I}_3\). We split
\begin{align*}
    |{{\rm II}}_2|\lesssim \int_{\supp(\nabla \eta_t)\setminus\Lambda^t} t^{-1}|\nabla (u\circ\tau_t^{(k)})| |v| dx + \int_{\supp(\nabla \eta_t)\cap\Lambda^t} \delta^{-1}|\nabla (u\circ\tau_t^{(k)})| |v| dx
    \\
    :={\rm II}_{21}+{\rm II}_{22}. 
\end{align*}
By Boundary H\"{o}lder (\refprop{prop:BoundaryHoelder}) like in \({\rm I}_{31}\), we have for the first term
\begin{align*}
    {{\rm II}}_{21}\lesssim t^\beta\int_{\partial\Omega}\tilde{N}(\nabla u) d\sigma\lesssim t^\beta \to 0\quad \textrm{ as }t\to 0.
\end{align*}
For the second integral, we follow the proof idea for \({\rm I}_{32}\). Using the same notation, we have by Boundary Hölder (\refprop{prop:BoundaryHoelder}), Cacciopolli, \eqref{eq:UpointwiseBoudnByN(nablau)3}, and \refass{ass:EPS0}
\begin{align*}
    |{\rm II}_{22}|&\lesssim\sum_j\int_{T(\Delta_{3t}(X_j))\cap\supp(\nabla \eta_t)}\delta^{-1}|v| |\nabla u\circ\tau_t^{(k)}|dy
    \\
    &\lesssim\sum_j\int_{T(\Delta_{3t}(X_j))\cap\supp(\nabla \eta_t)}\delta^{-1+\beta} \big(\fint_{\Delta_{c t}(\tilde{x}_j)}\tilde{N}(\nabla u) d\sigma\big)dx
    \\
    &\lesssim \sum_j\Big(\big(t^{-n+1}\int_{\Delta_{c t}(\tilde{x}_j)}\tilde{N}(\nabla u) d\sigma\big) \int_{T(\Delta_{3t}(X_j))}\delta^{-1+\beta}(y)dy\Big)
    \\
    &\lesssim t^\beta\int_{\partial\Omega}\tilde{N}(\nabla u) d\sigma \to 0 \quad \textrm{ as }t\to 0.
\end{align*}

Lastly, for \({\rm II}_1\), let \(\EPS>0\) be arbitrary, and set \(\Omega_\EPS=\Omega\setminus\{x\in \Omega;\delta(x)\leq \EPS\}\). We split
\begin{align*}{\rm II}_1&=\int_{\Omega_\EPS} (A\circ\tau_t^{(k)})\nabla (u\circ\tau_t^{(k)})\cdot \nabla(\chi_k\circ\tau^{(k)}_t) \eta_t v dx
\\
&\qquad + \int_{\{\delta(x)\leq \EPS\}} (A\circ\tau_t^{(k)})\nabla (u\circ\tau_t^{(k)})\cdot \nabla(\chi_k\circ\tau^{(k)}_t) \eta_t v dx={\rm II}_{11}^{(\EPS)}+{\rm II}_{12}^{(\EPS)}.
\end{align*}
By continuity of the \(L^2\) inner product, we have
\begin{align*}
{{\rm II}}_{11}^{(\EPS)}\to \int_{\Omega_\EPS} A\nabla u\cdot \nabla \chi_k v dx \quad \textrm{ when }t\to 0.
\end{align*}
For the other term, we obtain by \eqref{eq:UpointwiseBoudnByN(nablau)2}, the finite overlap of the Whitney cubes, and \(v\in L^\infty\) that
\begin{align*}
    {\rm II}_{12}^{(\EPS)}&\lesssim \sum_{i:W_i\cap\{\delta(x)\leq \EPS\}\cap\supp(\chi_k)\neq \emptyset} l(W_i)^n\Big(\fint_{W_i}|\nabla (u\circ\tau_t^{(k)})|^2dx\Big)^{1/2}
    \\
    &\lesssim \sum_{i:W_i\cap\{\delta(x)\leq \EPS\}\cap\supp(\chi_k)\neq \emptyset} l(W_i)\int_{\tilde{W}_i}\tilde{N}(\nabla u) d\sigma
    \lesssim \EPS \Vert\tilde{N}(\nabla u)\Vert_{L^1(\partial\Omega)}.
\end{align*}
Thus \({\rm II}_{12}^{(\EPS)}\) is uniformly bounded in \(t\) and \({\rm II}_{12}^{(\EPS)}\to 0\) when \(\EPS\to 0\). Hence 
$$
\lim_{t\to0}{{\rm II}}_1=\lim_{\EPS\to 0}\lim_{t\to0}({{\rm II}}_{11}^{(\EPS)} +{{\rm II}}_{12}^{(\EPS)})=\int_\Omega A\nabla u\cdot\nabla\chi_k v dx
$$
completes the proof of \eqref{eq:II}.

\medskip

\underline{\textbf{Step 3: Proof of \eqref{eq:III}:}} From the assumption of vanishing Neumann data for \(u\) in \eqref{eq:MixedPDEZeroData} we have that
\begin{align*}
\int_\mathcal{N}\fint_h^{2h}A\circ\tau_t^{(k)}\nabla (u\circ\tau_t^{(k)})\cdot \nu \,(\chi_k\circ\tau^{(k)}_t)\eta_t \Phi\,dt d\sigma\to 0\quad \textrm{ for }h\to 0, \Phi\in C^\infty(\overline{\Omega}).
\end{align*}
Since \(v\in C^\alpha(\overline{\Omega})\subset C(\overline{\Omega})\), we can approximate \(v\) by \(\Phi\in C^\infty(\overline{\Omega})\) in the \(L^\infty\)-norm arbitrarily close. If we combine this with
\begin{align*}
&\int_\mathcal{N}\fint_h^{2h}A\circ\tau_t^{(k)}\nabla (u\circ\tau_t^{(k)})\cdot \nu \,(\chi_k\circ\tau^{(k)}_t)\eta_t (v-\Phi)\,dt 
\\
&\lesssim \Vert v-\Phi\Vert_{L^\infty(\Omega)}\int_{\mathcal{N}\cap\supp(\chi_k)}\Big(\fint_{B_{h/2}(\tilde{x},\psi_k(\tilde{x})+3h/2)}|(A\circ\tau_t^k)\nabla (u\circ\tau_t^k)\cdot \nu|dy\Big) d\sigma(\tilde{x})
\\
&\lesssim \Vert v-\Phi\Vert_{L^\infty(\Omega)}\Vert\tilde{N}(\nabla u)\Vert_{L^1(\partial\Omega)}\lesssim \Vert v-\Phi\Vert_{L^\infty(\Omega)},
\end{align*}
we can also conclude that 
\begin{align*}
    {\rm III}&=\int_{\mathcal{N}} \fint_{h}^{2h}(A\circ\tau_t^k)\nabla (u\circ\tau_t^k)\cdot \nu \,(\chi_k\circ\tau_t^k)\eta_t v \,dtd\sigma
    \to 0\quad\textrm{ for }h\to 0.
\end{align*}

\begin{rem}
    In the proof of the uniqueness, \refthm{MainThm} \eqref{item:MainThmB}, we had to use the DKP condition (\refass{ass:largeDKP}) only to bound \({\rm I}_1\). In fact, the DPR condition (\refass{ass:largeDPR}) is not enough here and this shows in equation \eqref{eq:UniquenessDKPneededInsteadOfDPR}: Using the DKP condition, we can replace \(|A-A\circ\tau_t^{(k)}|\) with \(t|\nabla A|\) by the fundamental theorem of calculus, where it is important that the factor \(t\) goes to zeros depending only on \(t\) and not on the Whitney cube. For instance, this allows us to conclude that \(A\circ\tau^{(k)}\) converges to \(A\) pointwise almost everywhere, whereas the DPR condition does not give this conclusion.
\end{rem}

\begin{rem}
    \refass{ass:EPS0} is relevant for the bound for \({\rm I}_{32}\) and \({\rm II}_{22}\). For both we require \(\EPS_0\leq\beta\), where \(\beta\) is the exponent of the Boundary H\"{o}lder inequality (\refprop{prop:BoundaryHoelder}). Since \(\beta>0\) depends only on the Lipschitz constant \(l\), the ellipticity constant \(\lambda\), and the dimension \(n\), \(\EPS_0\) only depends these parameters.
\end{rem}

\bibliographystyle{alpha}
\bibliography{references} 
\end{document}